\theoremstyle{plain}
\newtheorem{thm}{Theorem}[section]		
\newtheorem{prop}[thm]{Proposition}
\newtheorem{cor}[thm]{Corollary}
\newtheorem{lem}[thm]{Lemma}
\theoremstyle{definition}
\newtheorem{df}{Definition}[section]
\theoremstyle{remark}
\newtheorem{rmk}{Remark}[section]
\newtheorem*{ac}{Acknowledgements}
\newcommand{\nn}{\mathbb{N}}
\newcommand{\zz}{\mathbb{Z}}
\newcommand{\rr}{\mathbb{R}}
\newcommand{\uu}{\mathcal{U}}
\DeclareMathOperator{\card}{card}
\DeclareMathOperator{\cl}{CL}
\DeclareMathOperator{\pc}{PC}
\DeclareMathOperator{\kpc}{KPC}
\DeclareMathOperator{\cdim}{Cdim}
\begin{document}

\title[On  the Assouad dimension and convergence]
{On the Assouad dimension and convergence of metric spaces}
\author[Yoshito Ishiki]
{Yoshito Ishiki}
\address[Yoshito Ishiki]
{\endgraf
Graduate School of Pure and Applied Sciences
\endgraf
University of Tsukuba
\endgraf
Tennodai 1-1-1, Tsukuba, Ibaraki, 305-8571, Japan}
\email{ishiki@math.tsukuba.ac.jp}

\date{January 16, 2020}
\subjclass[2010]{Primary 53C23; Secondary 54E40}
\keywords{Assouad dimesnion, Gromov--Hausdorff convergence. }
\thanks{The author was supported by JSPS KAKENHI Grant Number 18J21300.}

\begin{abstract}
We introduce the notion of pseudo-cones of metric spaces as a generalization of  both of the tangent cones and the asymptotic cones. 
We prove that the Assouad dimension of a metric space is bounded from below by that of any pseudo-cone of it. 
We exhibit a example containing all compact metric spaces as pseudo-cones,
and examples containing all proper length spaces as  tangent cones or asymptotic cones. 
\end{abstract}
\maketitle

\section{Introduction}
Assouad \cite{A1, A2, A3} introduced 
the notion of the so-called Assouad dimension for metric spaces, 
and studied  the relation between the  bi-Lipschitz embeddability
 into a Euclidean space for metric spaces and their Assouad dimensions.
In general, 
it seems to be difficult to estimate the Assouad dimension from below.   
Mackay and Tyson \cite{MT} provided 
a lower estimation of the  Assouad dimensions of metric spaces by using  their tangent spaces. 
Namely, they proved that
if $W$ is a tangent space of a metric space $X$, 
then $\dim_AW\le \dim_AX$, where $\dim_A$ stands for the Assouad dimension. 

%%%
Le Donne and Rajala \cite{LR} obtained  both-sides estimations of  the Assuad dimensions and the Nagata dimensions by tangent spaces under a certain assumption (see \cite[Thereoms 1.2, 1.4]{LR}). 
Dydak and Higes \cite{DH} obtained a lower estimation  of the Assouad-Nagata dimensions
by asymptotic cones as ultralimits (see \cite[Proposition 4.1]{DH}). 
%%%%

In this paper, 
we introduce the notion of pseudo-cones of metric spaces, 
which  can be considered as a generalization of tangent cones and asymptotic cones. 
For $h\in (0, \infty)$, 
and for a metric space $X$ with metric $d_X$,  
we denote by $hX$ the metric space $(X, hd_X)$. 
\begin{df}[Pseudo-cone]
Let $X$ be a metric space. 
Let $\{A_i\}_{i\in \nn}$ be a sequence of subsets of $X$,  
and let  $\{u_{i}\}_{i\in \nn}$ be a sequence in $(0, \infty)$. 
We say that a metric space $P$ is 
a \emph{pseudo-cone of $X$ approximated by $(\{A_i\}_{i\in \nn}, \{u_i\}_{i\in \nn})$} if 
$\lim_{i\to \infty}d_{GH}(u_iA_i, P)=0$, where $d_{GH}$ is the Gromov--Hausdorff distance.
\end{df}

For instance, 
every closed ball centered at a based point of a tangent cone  or an asymptotic cone  is a pseudo-cone. 
Indeed,  
if a pointed metric space $(W,w)$ is a tangent (resp. asymptotic) cone of a metric space $X$ at $p$,
 then  
for every $R\in (0, \infty)$ the closed ball $B(w, R)$ centered at $w$ with radius $R$ is a Gromov--Hausdorff limit of $r_iB(p_i, R/r_i)$, 
where $\{p_i\}_{i\in \nn}$ is a sequence in $X$ with $\lim_{i\to \infty}p_i=p$ and $\lim_{i\to \infty}r_i=\infty$ (resp. $0$);
in particular, 
$B(w, R)$ is a pseudo-cone of $X$ approximated by $(\{B(p_i, R/r_i)\}_{i\in \nn}, \{r_i\}_{i\in \nn})$.

For  a metric space $X$,
 we denote by $\pc(X)$ the class of all pseudo-cones of $X$.
By using the notion of pseudo-cones,  
we formulate a generalization of the Mackay--Tyson  estimation for the Assouad dimensions. 
\begin{thm}\label{thm:cone}
Let $X$ be a metric space. 
Then for every $P\in \pc(X)$ we have
\[
\dim_AP\le \dim_AX.
\] 
\end{thm}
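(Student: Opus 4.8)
The plan is to reduce the statement to the known Mackay–Tyson type estimate by controlling the Assouad dimension under two operations: rescaling a metric space (which leaves $\dim_A$ invariant, since $hX$ and $X$ have literally the same bounded covering behaviour up to a uniform scale factor), and passing to a Gromov–Hausdorff limit of a sequence of subsets. Since $\dim_A(u_iA_i)=\dim_A(A_i)\le\dim_A(X)$ for every $i$ (subsets do not increase Assouad dimension), the whole content is the semicontinuity statement: if $s>\dim_A X$, then $\dim_A P\le s$ for any $P$ that is a Gromov–Hausdorff limit of metric spaces each of Assouad dimension $\le s$. I would isolate this as a lemma.

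First I would recall the covering/doubling-type characterization of the Assouad dimension: $\dim_A Y\le s$ iff for every $\beta\in(0,1)$ there is a constant $C$ such that every ball of radius $r$ in $Y$ can be covered by at most $C\beta^{-s}$ balls of radius $\beta r$; equivalently one may phrase it with packing numbers (maximal cardinality of a $\beta r$-separated subset of an $r$-ball). I would prefer the separated-set formulation because separated sets transport cleanly across approximate isometries. Fix $s>\dim_A X$; then every $u_iA_i$ satisfies such a bound with a constant $C$ that does \emph{not} depend on $i$ — this is the point where rescaling invariance is used, and one should state it carefully, noting that passing to a subset only shrinks separated sets so the constant for $A_i$ (hence for $u_iA_i$) is at most that for $X$.

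Next, the approximation step. Given $P$ and $\varepsilon>0$, choose $i$ large enough that $d_{GH}(u_iA_i,P)<\delta$ for a $\delta$ to be fixed in terms of $r$ and $\varepsilon$; this yields a correspondence (or an $\delta$-isometry) between $P$ and $u_iA_i$ distorting distances by at most $2\delta$. Take any $r$-ball in $P$ and any maximal $\beta r$-separated set $S$ inside it. Push $S$ across the correspondence to a subset $S'$ of $u_iA_i$: if $\delta$ is small compared with $\beta r$, the image points are $(\beta r-2\delta)$-separated and lie in a ball of radius $r+2\delta\le 2r$ in $u_iA_i$. Applying the uniform bound in $u_iA_i$ (at scales $2r$ and, say, $\beta r/2$) gives $\card S=\card S'\le C'\beta^{-s}$ with $C'$ independent of $P$ and of the chosen ball. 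Since $\beta$ ranges over $(0,1)$ and $r$ over $(0,\infty)$, this is exactly $\dim_A P\le s$; letting $s\downarrow\dim_A X$ finishes the proof.

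The main obstacle is bookkeeping the constants and quantifiers in the approximation step: one must make sure the separation loss $2\delta$ and the radius inflation $2\delta$ can be absorbed \emph{uniformly} — i.e. that for each target scale $\beta$ in $P$ a single large $i$ works simultaneously for all radii $r$ arising, or alternatively that the Assouad bound is scale-invariant enough that it suffices to verify it at one normalized scale. Handling the degenerate cases ($\dim_A X=\infty$, trivial; $P$ a single point; balls that exhaust $P$) is routine. I expect no genuine difficulty beyond making the $\delta$–$\beta r$ comparison and the choice of intermediate scale $\beta r/2$ precise.
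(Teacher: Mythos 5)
Your proposal is correct and follows essentially the same route as the paper: the paper's Proposition \ref{prop:Assouad} is exactly the scale-invariant separated-set characterization you prefer ($\card(A)\le C(\delta(A)/\alpha(A))^{\beta}$ for finite $A$), and the paper likewise transfers a finite separated set across an $\epsilon$-approximation with $\epsilon$ small compared to its separation, exploiting the fact that the constant for every $u_iA_i$ is inherited uniformly from $X$. The only cosmetic differences are that the paper argues by contradiction with a single bad set $S\subset P$ and works with diameters rather than ball radii, which disposes of the $r$-versus-$i$ bookkeeping you flag as the main obstacle.
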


The notion of ultralimits of metric spaces is
 a method of emulating a limit space of a sequence of metric spaces (see Subsection \ref{subsec:ult}). 
Let $\uu$ be a non-principal ultrafilter on $\nn$. 
For a sequence $\{(X_i, p_i)\}_{i\in \nn}$ of pointed metric spaces, 
we denote by $\lim_{\uu}(X_i, p_i)$ the ultralimit of $\{(X_i, p_i)\}_{i\in \nn}$
with respect to $\uu$. 
The existence of an ultralimit is always guaranteed.

For an ultralimit analogy of pseudo-cones, we obtain the following:
\begin{thm}\label{thm:ultcone}
Let $X$ be a metric space. Let $\{A_i\}_{i\in \nn}$ be a sequence of subsets of $X$, 
and let $\{u_i\}_{i\in \nn}$ be a sequence in $(0, \infty)$. 
Take  $a_i\in A_i$ for each $i\in \nn$.  
Then 
for every non-principal ultrafilter $\uu$ on $\nn$ we have 
\[
\dim_A\left(\lim_{\uu}(u_iA_i, a_i)\right)\le \dim_AX. 
\]
\end{thm}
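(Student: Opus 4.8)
The plan is to argue directly with the ultrafilter, following the same mechanism as the proof of Theorem~\ref{thm:cone} but with the ultralimit playing the role of the Gromov--Hausdorff approximation. We may assume $\dim_AX<\infty$, otherwise there is nothing to prove. Fix $s>\dim_AX$. By the separated-set formulation of the Assouad dimension, there is a constant $C\ge 1$ such that for every $x\in X$ and all $0<r\le R$, every $r$-separated subset of the closed ball $\overline B^X(x,R)$ has at most $C(R/r)^s$ elements. It suffices to establish the same bound, with the same $C$ and $s$, for balls in $Y:=\lim_{\uu}(u_iA_i,a_i)$; this gives $\dim_AY\le s$, and letting $s\downarrow \dim_AX$ finishes the proof.

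So fix $y\in Y$, reals $0<r\le R$, and an $r$-separated subset $\{z^{(1)},\dots,z^{(N)}\}$ of $\overline B^Y(y,R)$; the goal is $N\le C(R/r)^s$. Choose representing sequences $y=[(y_i)_i]$ and $z^{(k)}=[(z^{(k)}_i)_i]$, noting that the underlying set of $u_iA_i$ is $A_i$, so all $y_i,z^{(k)}_i$ lie in $A_i\subseteq X$. By the definition of the ultralimit metric, $\lim_{\uu}u_id_X(y_i,z^{(k)}_i)=d_Y(y,z^{(k)})\le R$ for each $k$ and $\lim_{\uu}u_id_X(z^{(k)}_i,z^{(l)}_i)=d_Y(z^{(k)},z^{(l)})\ge r$ for $k\ne l$. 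Fix $\varepsilon\in(0,r)$. Then for each $k$ the set $\{\,i:u_id_X(y_i,z^{(k)}_i)<R+\varepsilon\,\}$ belongs to $\uu$, and for each pair $k\ne l$ the set $\{\,i:u_id_X(z^{(k)}_i,z^{(l)}_i)>r-\varepsilon\,\}$ belongs to $\uu$. A filter is closed under finite intersections, so the intersection of these finitely many sets lies in $\uu$ and in particular is nonempty; pick an index $i$ in it. For this single index the points $z^{(1)}_i,\dots,z^{(N)}_i$ all lie in $\overline B^X\!\bigl(y_i,(R+\varepsilon)/u_i\bigr)$ and are pairwise at $d_X$-distance exceeding $(r-\varepsilon)/u_i$, hence are distinct and form an $(r-\varepsilon)/u_i$-separated subset of a ball of radius $(R+\varepsilon)/u_i$ in $X$. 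The hypothesis on $X$ yields
\[
N\le C\left(\frac{(R+\varepsilon)/u_i}{(r-\varepsilon)/u_i}\right)^{s}=C\left(\frac{R+\varepsilon}{r-\varepsilon}\right)^{s},
\]
and letting $\varepsilon\downarrow 0$ gives $N\le C(R/r)^s$, as required.

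The one delicate point — the \emph{main obstacle} — is exactly this passage from $Y$ back to $X$: an ultrafilter limit of weak inequalities need not be witnessed exactly at any single index, so one must insert the $\varepsilon$-slack \emph{before} intersecting the $\uu$-large sets and only remove it afterwards by letting $\varepsilon\to0$. Everything else is bookkeeping: selecting the representatives, observing they lie in $X$, and recalling the separated-set characterization of $\dim_A$. I do not expect to need Theorem~\ref{thm:cone} here, though the argument is a transcription of its proof with $\uu$ replacing the Gromov--Hausdorff data. (If one insisted on deducing Theorem~\ref{thm:ultcone} from Theorem~\ref{thm:cone}, one could re-center the ultralimit at an arbitrary point $y=[(y_i)_i]$ and check that $\overline B^Y(y,R)$ is, along a suitable subsequence, a Gromov--Hausdorff limit of the rescaled balls $u_{i_k}\bigl(A_{i_k}\cap\overline B^X(y_{i_k},R/u_{i_k})\bigr)$ and hence a pseudo-cone of $X$; but one would then still need the constants coming from Theorem~\ref{thm:cone} to be uniform over this family of balls, which is precisely what the direct argument above supplies.)
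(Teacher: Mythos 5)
Your argument is correct and uses essentially the same mechanism as the paper's proof: a finite configuration in the ultralimit is pulled back to a single index $i$ by intersecting finitely many $\uu$-large sets on which the rescaled distances are within a controlled error of the limit distances, and the Assouad bound for $X$ is then applied to that finite set. The only differences are cosmetic --- you argue directly with the separated-sets-in-balls characterization and an $\varepsilon\downarrow 0$ limit, whereas the paper argues by contradiction with the $\card(A)\le C(\delta(A)/\alpha(A))^{\beta}$ characterization and a fixed slack absorbed into the constant $4^{\beta}$.
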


The lower Assouad dimension was essentially introduced by Larman \cite{Lar}. 
This dimension is used for interpolation of the Assouad dimension.
We also obtain the similar estimations as Theorems \ref{thm:cone}  and \ref{thm:ultcone} for the lower Assouad dimension 
(see Theorems \ref{thm:lower} and \ref{thm:ultlower}).

In the conformal dimension theory, 
the conformal Assouad dimension is studied as an invariant of quasi-symmetric maps (see \cite{MT}). 
In other words, 
by comparing the conformal  Assouad dimensions of metric spaces, 
we can distinguish their quasi-symmetric equivalent classes. 
In general,  
it seems to be quite difficult to find the exact value of  the conformal Assouad dimension.

For a metric space $X$, 
we denote by $\kpc(X)$ the class of all pseudo-cones approximated by 
a pair of  a sequence  $\{A_i\}_{i\in \nn}$ of compact sets of  $X$ and a sequence $\{u_i\}_{i\in \nn}$ in $(0, \infty)$. 
We also obtain the following lower estimation of the conformal Assouad dimensions: 
\begin{thm}\label{thm:coneconf}
Let $X$ be a metric space. 
Then for every $P\in \kpc(X)$ we have 
\[
\cdim_AP\le \cdim_AX, 
\]
where $\cdim_A$ stands for the conformal Assouad dimension. 
\end{thm}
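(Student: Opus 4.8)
The plan is to reduce the statement to Theorem~\ref{thm:ultcone}. First, recall that by definition $\cdim_AZ=\inf\{\dim_AW : W\text{ is quasi-symmetrically equivalent to }Z\}$, so that $\cdim_A$ is a quasi-symmetric invariant; hence it is enough to prove $\cdim_AP\le\dim_AY$ for an arbitrary metric space $Y$ that is quasi-symmetrically equivalent to $X$, and then take the infimum over such $Y$. So I would fix such a $Y$ together with an $\eta$-quasi-symmetric homeomorphism $f\colon X\to Y$, and note that we may assume $\dim_AY<\infty$ (otherwise there is nothing to prove); then $Y$, and hence also $X$, is doubling.

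Next I would unwind the hypothesis: let $P\in\kpc(X)$ be approximated by $(\{A_i\}_{i\in\nn},\{u_i\}_{i\in\nn})$ with each $A_i\subseteq X$ compact. The case $\card(P)\le 1$ is trivial, so assume $\di(P)>0$; discarding finitely many indices, we may assume $\di(A_i)\in(0,\infty)$ for every $i$. Put $B_i:=f(A_i)$, a compact subset of $Y$, and $v_i:=1/\di(B_i)$, so $\di(v_iB_i)=1$. The point I would stress here is that \emph{quasi-symmetry is invariant under rescaling the metrics}, so the map $g_i\colon u_iA_i\to v_iB_i$ induced by $f|_{A_i}$ is still $\eta$-quasi-symmetric, and $g_i^{-1}$ is $\eta'$-quasi-symmetric for an $\eta'$ depending only on $\eta$. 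I would then fix a non-principal ultrafilter $\uu$ on $\nn$, choose $a_i\in A_i$, set $b_i:=f(a_i)\in B_i$, and form
\[
P':=\lim_{\uu}(u_iA_i,a_i),\qquad Q:=\lim_{\uu}(v_iB_i,b_i).
\]
Since $X$ is doubling, the $u_iA_i$ are uniformly doubling, and they are uniformly bounded because $\di(u_iA_i)\to\di(P)$; being uniformly totally bounded, we may assume (after replacing $P$ by its completion, which changes neither $\cdim_AP$ nor the hypothesis) that $P$ is compact, and then $P'$ is isometric to $P$. Applying Theorem~\ref{thm:ultcone} to $Y$, the sequence $\{B_i\}$ and the scalars $\{v_i\}$ gives $\dim_AQ\le\dim_AY$.

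The hard part will be showing that $P\cong P'$ is quasi-symmetrically equivalent to $Q$. The candidate is the ultralimit map $g_{\uu}\colon P'\to Q$, $g_{\uu}([x_i])=[g_i(x_i)]$. To see it is well defined and injective, and that it inherits the $\eta$-quasi-symmetry inequality, I would pass to the limit along $\uu$ in the corresponding statements for the $g_i$; this is exactly where the uniform metric control pays off. Indeed $\di(u_iA_i)$ stays in a fixed compact subinterval of $(0,\infty)$ (it converges to $\di(P)\in(0,\infty)$) while $\di(v_iB_i)=1$, so in the inequality $d(g_ix_i,g_iy_i)\le\eta\bigl(d(x_i,y_i)/d(x_i,z_i)\bigr)\,d(g_ix_i,g_iz_i)$ one can always choose the auxiliary point $z_i$ so that $d(x_i,z_i)$ is bounded below, and continuity of $\eta$ at $0$ then delivers the required estimates; the same reasoning for $g_i^{-1}$ yields a two-sided inverse, so $g_{\uu}$ is an $\eta$-quasi-symmetric homeomorphism. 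Granting this, $\cdim_AP=\cdim_AQ\le\dim_AQ\le\dim_AY$, and the infimum over $Y$ finishes the proof.

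Two features of the setup are worth isolating, as they are what make the hard step work. The rescaling from $u_iA_i$ to $v_iB_i$ does no damage precisely because quasi-symmetry ignores the scale of the metric; and the scalar $v_i$ is available only because $A_i$, hence $B_i$, is compact, so that $\di(B_i)<\infty$ --- this is the reason the theorem is stated for $\kpc(X)$ and not for all of $\pc(X)$. Everything else --- the reduction to a single $Y$, the invocation of Theorem~\ref{thm:ultcone}, and the identification $P'\cong P$ --- should be routine.
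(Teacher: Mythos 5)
Your proposal is correct and follows essentially the same route as the paper's proof: rescale the images $f(A_i)$ by the reciprocal of their diameters, pass to ultralimits, and push the condition (QS) through the limit to obtain a quasi-symmetric equivalence between $P$ and a pseudo-cone $Q$ of $Y$ whose Assouad dimension is controlled by $\dim_AY$. The only (harmless) deviations are that you invoke Theorem~\ref{thm:ultcone} directly where the paper uses Gromov precompactness, Lemma~\ref{lem:inyoulemma} and Theorem~\ref{thm:cone}, and that you verify the inverse map by a uniform equicontinuity argument for the $g_i^{-1}$ where the paper appeals to the Tukia--V\"ais\"al\"a theorem (Theorem~\ref{thm:tv}).
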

As a consequence of Theorem \ref{thm:coneconf}, 
 for every metric space $X$,  
 we can estimate the conformal Assouad dimension of $X$ 
 by the conformal Assouad dimension of closed balls of an ultralimit constructed by scalings of subsets of $X$ (see Corollary \ref{cor:ballconf}).  
 
 The points in the proofs of Theorems \ref{thm:cone}, \ref{thm:ultcone} and \ref{thm:coneconf} for pseudo-cones or ultralimits are to use the stability of the Assouad dimension under scaling of metrics,  and  to 
 extract the techniques of Mackay and Tyson \cite{MT} in their lower estimation for  their tangent spaces. 
 
We say that a topological space $X$ is an \emph{$(\omega_0+1)$-space} if
 $X$ is homeomorphic to the one-point compactification of the countable discrete topological space.
For example, 
the ordinal space $\omega_0+1$ with the order topology is an $(\omega_0+1)$-space. 

We construct an $(\omega_0+1)$-metric space containing any compact metric space as a pseudo-cone. 
\begin{thm}\label{thm:ury}
There exists an $(\omega_0+1)$-metric space $X$ such that 
$\pc(X)$ contains all compact metric spaces. 
\end{thm}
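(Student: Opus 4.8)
The plan is to build a single metric space $X$ that is homeomorphic to the one-point compactification of a countable discrete space, and which simultaneously "sees" every compact metric space as a Gromov--Hausdorff limit of scaled subsets clustering at the limit point. The starting observation is that there is a countable family $\{C_n\}_{n\in\nn}$ of compact metric spaces that is dense in the class of all compact metric spaces with respect to $d_{GH}$: indeed, finite metric spaces with rational distances are countable, and every compact metric space is a $d_{GH}$-limit of its finite $\varepsilon$-nets, which in turn are approximated by rational-distance finite spaces. So it suffices to produce an $(\omega_0+1)$-metric space $X$ such that $\pc(X)$ contains each $C_n$; since $\pc(X)$ is closed under $d_{GH}$-limits (a routine diagonal argument on the approximating pairs), it will then contain all compact metric spaces.

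Next I would assemble $X$ from rescaled, well-separated copies of the $C_n$. Concretely, choose a summable sequence of "placement radii" $\rho_n\downarrow 0$ and a sequence of scaling factors $\lambda_n>0$ with $\lambda_n\cdot\di(C_n)$ much smaller than the gaps between consecutive blocks, and form the disjoint union $\bigsqcup_n (\lambda_n C_n)$ together with one extra point $\infty$. Define a metric on this set by making each block $\lambda_n C_n$ isometrically embedded, declaring the distance from $\infty$ to every point of the $n$-th block to be comparable to $\rho_n$, and defining cross-block distances through $\infty$ (i.e. $d(x,y)=d(x,\infty)+d(\infty,y)$ for $x,y$ in different blocks); one checks the triangle inequality holds provided the block diameters $\lambda_n\di(C_n)$ are dominated by $\rho_n$. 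With $\rho_n\to 0$ this metric makes $X$ compact, with $\infty$ as the unique accumulation point and every block point isolated, so $X$ is an $(\omega_0+1)$-space. Finally, to exhibit $C_n\in\pc(X)$, set $A_i$ to be the $i$-th block $\lambda_i C_i$ restricted to any fixed subsequence realizing $C_n$ (or simply take a subsequence of indices on which $C_n$ itself appears, after arranging that each $C_n$ occurs infinitely often in the enumeration), and let $u_i=1/\lambda_i$; then $u_i A_i$ is isometric to $C_n$, so $d_{GH}(u_iA_i,C_n)=0$ along that subsequence, and pseudo-cones only require the limit along the full sequence — so I instead arrange the enumeration so that the blocks corresponding to index $n$ appear cofinally and let $u_i$, $A_i$ run over exactly those, giving $\lim_i d_{GH}(u_iA_i,C_n)=0$.

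The step I expect to be the main obstacle is reconciling two competing requirements: the blocks must shrink (in their ambient position near $\infty$) fast enough that $X$ is genuinely an $(\omega_0+1)$-space and the metric/triangle inequality is valid, yet each block, after applying its own large rescaling $u_i=1/\lambda_i$, must reproduce $C_n$ on the nose (or up to $o(1)$) in the Gromov--Hausdorff sense. The tension is resolved by decoupling the intrinsic scale $\lambda_n$ of a block from its placement radius $\rho_n$: one first fixes, for each target $C_n$, a sequence of rescaled isometric copies, then chooses $\lambda_n$ tiny and $\rho_n$ tinier still (e.g. $\rho_n = 2^{-n}$, $\lambda_n = \rho_n / (1+\di(C_n))$), and verifies the triangle inequality block-by-block. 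I would also need to double-check that no \emph{unwanted} small-scale structure sneaks into $\pc(X)$ in a way that breaks the argument — but that is not required here, since the theorem only asks for containment, not equality, of $\pc(X)$ with the class of compact metric spaces.
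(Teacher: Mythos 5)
Your proof is correct, and it arrives at essentially the paper's telescope space by a partly different route. The space you build is the paper's $T(\mathcal{X})$ from Section 4.1: finite metric blocks with diameters dominated by placement radii tending to $0$, glued to a single point $\infty$, with cross-block distances routed through $\infty$ (the paper uses $\max\{2^{-i},2^{-j}\}$ rather than the sum, but both give a valid metric and an $(\omega_0+1)$-space). Where you genuinely diverge is in the choice of blocks and in the final approximation step. The paper fixes a separable $\mathfrak{S}$-universal space $U$ (the Urysohn space or $C([0,1])$), takes a countable dense $Q\subset U$, and uses as blocks all finite subsets of $Q$; an arbitrary compact $K$ then embeds isometrically into $U$ and is Hausdorff-approximated by finite subsets of $Q$, which after rescaling are literal subsets of $X$, so $K\in\pc(X)$ in one step. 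You instead take as blocks the countably many finite rational-distance metric spaces and reduce the general case to them via two auxiliary facts: Gromov--Hausdorff density of finite rational metric spaces among compacta (standard, e.g.\ via perturbing a Kuratowski embedding of a finite $\epsilon$-net to rational coordinates in $\ell^\infty_n$), and closure of $\pc(X)$ under $d_{GH}$-limits. Your diagonal argument for the latter is sound: if $d_{GH}(u_i^{(n)}A_i^{(n)},P_n)\to 0$ as $i\to\infty$ for each $n$ and $d_{GH}(P_n,P)\to 0$, choose $i_n$ with $d_{GH}(u_{i_n}^{(n)}A_{i_n}^{(n)},P_n)<1/n$ and apply the triangle inequality for $d_{GH}$. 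Your route avoids universal spaces entirely at the cost of these two lemmas; the paper's route is shorter here and, more to the point, sets up machinery (homogeneity of $U$, Hausdorff approximation of balls by finite subsets of $Q$) that is reused for the tangent- and asymptotic-cone constructions in Theorems 1.5 and 1.6. One small simplification available to you: since the definition of pseudo-cone allows an arbitrary sequence of subsets, the constant sequence $A_i=\lambda_nC_n$ (the single block) with $u_i=1/\lambda_n$ already exhibits $C_n\in\pc(X)$, so no cofinal repetition of blocks in the enumeration is needed.
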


A metric space is said to be  a \emph{length space} if
 the distance of two points in the metric space is equal to the infimum of lengths of arcs jointing the two points. 
A metric space is said to be \emph{proper} if all bounded closed sets in the metric space are compact. 

Similarly to Theorem \ref{thm:ury}, 
we construct metric spaces containing any proper length space as a tangent cone or an asymptotic cone. 

\begin{thm}\label{thm:tancone}
There exists an $(\omega_0+1)$-metric space $X$ for which
 every pointed proper length space $(K, p)$ is a tangent cone of $X$ at its unique accumulation point. 
\end{thm}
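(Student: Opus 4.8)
The plan is to build $X$ by hand: one ``center'' point $p$ together with countably many finite, well-separated ``clusters'', each cluster a rescaled copy of a finite metric space drawn from a fixed countable family rich enough to approximate, in the pointed Gromov--Hausdorff sense, every pointed proper length space; placing the clusters at distances to $p$ tending to $0$, and small relative to those distances, makes $p$ the unique accumulation point while letting each cluster resurface, suitably rescaled, in a tangent cone at $p$. Concretely, I would first fix a countable family $\mathcal{F}=\{(F_n,o_n)\}_{n\in\nn}$ of finite pointed metric spaces with rational mutual distances such that every pointed proper metric space --- hence every pointed proper length space --- is a pointed Gromov--Hausdorff limit of some sequence drawn from $\mathcal{F}$. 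This is the usual separability of proper pointed spaces: given $(Y,y)$ proper, each closed ball $B(y,R)$ is compact, so it has a finite $\varepsilon$-net containing $y$; rounding its mutual distances upward to a common small rational multiple preserves the triangle inequality and perturbs the metric by at most $\varepsilon$, producing a member of $\mathcal{F}$ within pointed Gromov--Hausdorff distance $O(\varepsilon)$ of $B(y,R)$, and letting $R\to\infty$, $\varepsilon\to 0$ gives the claim. Then I would fix an enumeration $(G_k,b_k)_{k\in\nn}$ of $\mathcal{F}$ in which every member occurs for infinitely many indices $k$.

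Next I would assemble $X$. Put $\delta_k=4^{-k}$ and pick $\sigma_k\in(0,\infty)$ so small that $\sigma_k\,\di(G_k)\le\delta_k/100$, $\sigma_k\to 0$, and $\delta_k/\sigma_k\to\infty$ (for instance $\sigma_k=\delta_k\,2^{-k}/\bigl(100(1+\di(G_k))\bigr)$ works). Let $C_k$ be an isometric copy of the rescaled space $\sigma_kG_k$, with a distinguished point $c_k$ corresponding to $b_k$; so $\di(C_k)\le\delta_k/100$. Define
\[
X=\{p\}\sqcup\bigsqcup_{k\in\nn}C_k
\]
with metric $d$ given by: $d$ on each $C_k$ is that of $\sigma_kG_k$; $d(p,x)=\delta_k$ for $x\in C_k$; and $d(x,y)=\delta_k+\delta_j$ for $x\in C_k$, $y\in C_j$, $k\ne j$. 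A routine case check (using $\di(C_k)\le\delta_k/100$ and positivity of the $\delta_k$) shows $d$ is a metric. Each $C_k$ is finite and lies at distance $\ge\delta_k$ from every point of $X$ outside it, so $C_k$ is open and all its points are isolated; since $\delta_k\to 0$, the point $p$ is an accumulation point, and plainly the only one. Hence $X\setminus\{p\}$ is a countably infinite discrete set and $X$ is compact, so $X$ is an $(\omega_0+1)$-metric space whose unique accumulation point is $p$.

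Finally, given an arbitrary pointed proper length space $(K,q)$, I would realize it as a tangent cone of $X$ at $p$. By the first step choose $(F_{n_i},o_{n_i})\to(K,q)$ in the pointed Gromov--Hausdorff sense, and by the repetition property choose $k_1<k_2<\cdots$ with $(G_{k_i},b_{k_i})=(F_{n_i},o_{n_i})$. Set $r_i=1/\sigma_{k_i}$ and $z_i=c_{k_i}\in C_{k_i}\subset X$; then $r_i\to\infty$ and $d(p,z_i)=\delta_{k_i}\to 0$, so $z_i\to p$. The rescaled cluster $r_iC_{k_i}$ is isometric to $G_{k_i}=F_{n_i}$ with $z_i$ corresponding to $o_{n_i}$, while every point of $X$ outside $C_{k_i}$ lies at distance $\ge\delta_{k_i}$ from $z_i$, hence at distance $\ge\delta_{k_i}/\sigma_{k_i}$ from $z_i$ in $r_iX$. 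So for each fixed $R$ and all $i$ with $\delta_{k_i}/\sigma_{k_i}>R$ the closed balls $B_{r_iX}(z_i,R)$ and $B_{r_iC_{k_i}}(z_i,R)$ coincide, whence $(r_iX,z_i)$ and $(F_{n_i},o_{n_i})$ have isometric closed $R$-balls for all large $i$. Since $(F_{n_i},o_{n_i})\to(K,q)$, this forces $(r_iX,z_i)\to(K,q)$ in the pointed Gromov--Hausdorff sense, exhibiting $(K,q)$ as a tangent cone of $X$ at its unique accumulation point $p$. As $(K,q)$ was arbitrary, the theorem follows.

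The genuinely delicate points are the density statement in the first step --- securing a single countable family of finite metric spaces that approximates every proper length space, with care to preserve the triangle inequality when distances are rounded to rationals --- and, in the last step, the passage to pointed Gromov--Hausdorff convergence at the level of closed balls, the customary source of $\varepsilon$-bookkeeping; I would dispose of the latter by routing all such bookkeeping through the already-established convergence $(F_{n_i},o_{n_i})\to(K,q)$. Checking that $d$ is a metric and that $X$ is an $(\omega_0+1)$-space is routine, and the parameter inequalities leave ample room, so these pose no obstacle.
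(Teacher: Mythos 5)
Your construction is correct, and it follows the same overall architecture as the paper's proof: enumerate, with infinite repetition, a countable family of finite pointed metric spaces whose members approximate (ball by ball, in the pointed Gromov--Hausdorff sense) every compact ball of every proper length space; glue suitably rescaled copies into a countable compact space whose only accumulation point is the center; then, for a given $(K,q)$, select the subsequence of clusters approximating $(K,q)$ and note that for each fixed $R$ the rescaled closed $R$-ball eventually meets only one cluster, so it is isometric to the corresponding ball of the approximant. Your telescope-type metric (sum of distances to the center across clusters, versus the paper's max) and your scaling bookkeeping ($\sigma_k\di(G_k)\le\delta_k/100$, $\delta_k/\sigma_k\to\infty$, versus the paper's $r_i=(i+1)!\cdot\delta(F_i)$) play identical roles. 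Where you genuinely diverge is in how the countable dense family is produced: the paper isometrically embeds every proper length space into a separable homogeneous $\mathfrak{S}$-universal space $U$ (Urysohn or $C([0,1])$), takes the finite subsets of a countable dense $Q\subset U$ as the family, and runs all approximation estimates as Hausdorff-distance estimates inside the single ambient space $U$ (Propositions \ref{prop:length}, \ref{prop:conv} and \ref{prop:homuni}); you instead use finite metric spaces with rational distances and argue abstractly. Your route dispenses with the universal space and with the homogeneity used to normalize the basepoint, at the cost of doing the $\varepsilon$-net bookkeeping without a common ambient space; the rounding-up argument for the triangle inequality is sound.

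Two cautions on the one step you flag as delicate. First, your parenthetical claim that \emph{every pointed proper metric space} is a pointed limit of your family overreaches: passing from ``the rounded $\varepsilon$-net of $B(y,R_i)$ is close to $B(y,R_i)$'' to ``its closed $R$-ball is close to $B(y,R)$ for each fixed $R\le R_i$'' requires controlling $d_H(B(y,R),B(y,R'))$ by $|R-R'|$, which is exactly the length-space estimate (the paper's Proposition \ref{prop:length}) and fails for general proper spaces. Second, you cannot really ``route all bookkeeping through the already-established convergence $(F_{n_i},o_{n_i})\to(K,q)$,'' because establishing that convergence \emph{is} the ball-level bookkeeping; it is precisely the content of the paper's Proposition \ref{prop:conv}, and it is where the hypothesis that $K$ is a length space gets consumed. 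Neither point breaks your proof --- the theorem concerns only length spaces --- but the density step should be stated and proved for those, with the concentric-ball estimate made explicit.
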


\begin{thm}\label{thm:asymcone}
There exists a proper countable discrete metric space $X$ 
for which  every pointed proper length space $(K, p)$ is an asymptotic cone of $X$ at some point. 
\end{thm}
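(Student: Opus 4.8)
The plan is to exhibit a single point $p_0$ of $X$ at which \emph{every} pointed proper metric space (in particular every pointed proper length space) arises as an asymptotic cone; $X$ itself will be an infinite ``bouquet'' of rescaled \emph{finite} metric spaces, and a density/diagonal argument will upgrade the finite model spaces to arbitrary proper ones.

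\emph{Reduction.} Fix a sequence $\{(K_m,q_m)\}_{m\in\nn}$ of pointed finite metric spaces in which every pointed finite metric space with rational distances occurs infinitely often. Every pointed proper metric space $(Y,y)$ is a pointed Gromov--Hausdorff limit of a subsequence $(K_{m_l},q_{m_l})_l$ with $m_1<m_2<\cdots$ (take rational perturbations of finite $1/l$-nets of $B_Y(y,l)$ containing $y$). Moreover the class of asymptotic cones of a fixed metric space at a fixed point is closed under pointed Gromov--Hausdorff limits: this is a routine diagonal argument, using that pointed Gromov--Hausdorff convergence of proper pointed spaces is metrizable and that, $X$ being discrete, any base-point sequence converging to $p_0$ is eventually equal to $p_0$. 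Hence it suffices to build a proper countable discrete metric space $X$ with a point $p_0$ for which each $(K_m,q_m)$ is an asymptotic cone of $X$ at $p_0$.

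\emph{Construction.} Let $\mu_m:=\min\{d_{K_m}(a,b):a\ne b\in K_m\}$ (and $\mu_m:=1$ when $K_m$ is a point). Choose reals $1\le\Lambda_1<\Lambda_2<\cdots$ recursively so that for every $m\ge 2$,
\[
\Lambda_m\ \ge\ \max\Bigl\{\Lambda_{m-1}+1,\ \tfrac{m}{\mu_m}\Lambda_{m-1},\ m\cdot\max_{m'<m}\bigl(\Lambda_{m'}\di(K_{m'})\bigr)\Bigr\};
\]
only finitely many lower bounds are imposed at each stage, so such a sequence exists and $\Lambda_m\to\infty$. Let $X=\bigvee_{m}\Lambda_m K_m$ be the bouquet of the rescaled spaces: glue one copy of $\Lambda_m K_m$ for each $m$ along the image $p_0$ of $q_m$, and give $X$ the metric that restricts to $\Lambda_m d_{K_m}$ on the $m$-th piece and satisfies $d_X(x,y)=d_X(x,p_0)+d_X(p_0,y)$ whenever $x,y$ lie in distinct pieces (the standard metric on a wedge of pointed spaces). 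Then $X$ is countable; it is discrete, since any $x\ne p_0$ in the $m$-th piece has $d_X(x,z)\ge\Lambda_m\mu_m\ge 1$ for all $z\ne x$; and it is proper, since $\Lambda_m\mu_m\ge m\to\infty$ forces all but finitely many pieces to meet $B_X(p_0,R)$ only in $p_0$, so each ball $B_X(p_0,R)$, hence each bounded subset of $X$, is finite.

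\emph{Verification and the main point.} Fix $m$, let $m_1<m_2<\cdots$ enumerate the indices with $(K_{m_l},q_{m_l})=(K_m,q_m)=:(K,q)$, and set $r_l:=1/\Lambda_{m_l}\to 0$. Fix $R>0$ and decompose $B_X(p_0,R\Lambda_{m_l})$ piece by piece: the $m_l$-th piece contributes exactly $\Lambda_{m_l}B_K(q,R)$, whose rescaling by $r_l$ is \emph{isometric} to $B_K(q,R)$; a piece of index $m'<m_l$ contributes a set of diameter $\le(\Lambda_{m'}/\Lambda_{m_l})\di(K_{m'})\le 1/m_l$ attached at $p_0$; and once $l$ is so large that $m_l>R$, a piece of index $m'>m_l$ has $\Lambda_{m'}\mu_{m'}\ge m'\Lambda_{m'-1}\ge m'\Lambda_{m_l}>R\Lambda_{m_l}$ and so meets $B_X(p_0,R\Lambda_{m_l})$ only in $p_0$. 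Thus, for all large $l$, $B_{r_lX}(p_0,R)$ is an isometric copy of $B_K(q,R)$ together with a subset lying within distance $1/m_l$ of $q$, giving $d_{GH}\bigl(B_{r_lX}(p_0,R),B_K(q,R)\bigr)\le 1/m_l\to 0$; hence $(r_lX,p_0)\to(K,q)$, so $(K,q)$ is an asymptotic cone of $X$ at $p_0$, which together with the Reduction step proves the theorem. The main obstacle is exactly this scale separation: the factors $\Lambda_m$ must grow fast enough that, at the scale tuned to the $m_l$-th piece, every \emph{smaller} piece has collapsed into an arbitrarily small ball about $p_0$ while every \emph{larger} piece has not yet entered the relevant ball, because only then is the fact that inter-piece distances in $X$ are forced to pass through $p_0$ harmless for the Gromov--Hausdorff comparison; the same estimates are what guarantee properness of $X$.
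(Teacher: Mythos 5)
Your construction of $X$ is essentially the paper's: a countable wedge of rescaled finite metric spaces glued at a single point, with inter-piece distances forced through the gluing point, and with scale factors growing fast enough that, at the scale adapted to the $m_l$-th piece, every earlier piece collapses into an arbitrarily small neighbourhood of the base point while every later piece stays outside the relevant ball. Your properness/discreteness checks and the three-way decomposition of $B_X(p_0,R\Lambda_{m_l})$ are correct and play exactly the role of Lemmas \ref{lem:n+1}--\ref{lem:32}. So the construction does realize every finite rational pointed metric space as an asymptotic cone of $X$ at $p_0$, with exact convergence of the $R$-balls for every $R$.

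The gap is in the Reduction step. The definition of asymptotic cone in force here demands that for \emph{every} $R\in(0,\infty)$ the rescaled closed $R$-balls converge to $B(y,R)$, and closed $R$-balls are not stable under pointed Gromov--Hausdorff limits: if $Y_l=\{0,\,1+1/l\}\subset\rr$ with base point $0$, then $(Y_l,0)$ converges to $(\{0,1\},0)$, yet $B(0,1;Y_l)=\{0\}$ for all $l$ and does not converge to $B(0,1;\{0,1\})$. Hence your claim that ``the class of asymptotic cones of a fixed metric space at a fixed point is closed under pointed Gromov--Hausdorff limits'' is not a routine diagonal argument, and for arbitrary proper metric spaces --- which is what you assert in passing --- it should not be expected to hold; this boundary-of-the-ball phenomenon is precisely why the theorem is stated for \emph{length} spaces. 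What rescues your argument in the length-space case is (i) that your approximants are (rationally distorted) $1/l$-nets of $K$, so $B(q_{m_l},R;K_{m_l})$ is sandwiched between nets of $B(q,R-\epsilon_l;K)$ and of $B(q,R;K)$, and (ii) the estimate $d_H(B(q,r),B(q,R);K)\le|r-R|$ of Proposition \ref{prop:length}, which makes all of these sets Hausdorff-close to $B(q,R;K)$ for every $R$ simultaneously. This is exactly the content of the paper's Proposition \ref{prop:conv}, and it is the one place where the length hypothesis is actually used; as written, you have asserted that step rather than proved it, so you need to supply this ball-sandwiching argument to close the reduction.
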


The metric  spaces mentioned in Theorems \ref{thm:ury}, \ref{thm:tancone} or \ref{thm:asymcone} are
 examples showing that analogies of Theorem \ref{thm:cone} for
  the topological dimension, the Hausdorff dimension and the conformal Hausdorff dimension are false (see Proposition \ref{prop:counter}).

The organization of this paper is as follows: 
In Section \ref{sec:pre}, 
we review the definitions and basic properties of the Assouad dimension and the Gromov--Hausdorff distance. 
In Section \ref{sec:cone}, 
we prove Theorems \ref{thm:cone}, \ref{thm:ultcone} and \ref{thm:coneconf}. 
In Section \ref{sec:examples}, 
we construct examples ststed in Theorems \ref{thm:ury}, \ref{thm:tancone} and \ref{thm:asymcone}.

Fraser and Yu \cite{FY} studied a characterization  of subspaces of full Assouad dimension in the Euclidean space from a viewpoint of number theory. 
In \cite{FY}, 
they observed a variant of the Mackay--Tyson estimation in a Euclidean setting. 
In the forthcoming paper \cite{Ishi},
as an application of 
 Theorem \ref{thm:cone}, 
the author will generalized 
 the characterization theorem of Fraser and Yu to a metric space setting. 
In \cite{Ishi},  the author will introduce the notion of  \emph{tiling spaces},  which form a subclass of metric spaces including  the Euclidean spaces, 
the middle-third Cantor spaces, 
and various self-similar spaces appeared in fractal geometry. 
In \cite{Ishi}, the author will prove that for every doubling tiling space $X$, a subset  $F$ of $X$ satisfies $\dim_AF=\dim_AX$ if and only if 
 some specific subset of $X$,  called a \emph{tile},  is  a pseudo-cone of $F$.

\begin{ac}
The author would like to thank Professor Koichi Nagano for his advice and constant encouragement. 
The author also would like to thank Enrico Le Donne,  
Tapio Rajala and Jerzy Dydak for their helpful comments on the references.  
\end{ac}

\section{Preliminaries}\label{sec:pre}
Let $X$ be a metric space. 
The symbol $d_X$  stands for the metric of $X$. 
Let $A$ be a subset of $X$.  
We denote by $\delta(A)$ the diameter of $A$, 
and
we set  $\alpha(A)=\inf\{\, d_X(x, y)\mid \text{$x, y\in A$ and $x\neq y$}\, \}$. 
We denote by $B(x,r)$ (resp. $U(x,r)$) the closed (resp. open) ball centered at $x$ with radius $r$. 
We also denote by $B(A, r)$ the set $\bigcup_{a\in A}B(a,r)$. 
To emphasize a metric space under consideration, 
we often use symbols $\delta_{X}(A)$,  $\alpha_{X}(A)$,  
$B(x, r;X)$ and $B(A, r; X)$ instead of $\delta(A)$, 
$\alpha(A)$, $B(x, r)$ and $B(A, r)$, 
respectively.  
A subset $A$ of $X$ is said to be \emph{$r$-separated} if $\alpha(A)\ge r$.
A subset $A$ is \emph{separated} if it is $r$-separated for some $r$.  

In this paper, we denote by $\nn$ the set of all non-negative integers.

\subsection{Assouad dimension}
For a positive integer $N\in \nn$,
a metric space $X$ is said to be \emph{$N$-doubling} 
if for every bounded set $S\subset X$ 
there exists a subset $F\subset X$ such that 
$S\subset B(F, \delta(S)/2)$ and $\card(F)\le N$. 
Note that if a metric  space $X$ is $N$-doubling, 
then so are  all subsets of $X$. 
A metric space is \emph{doubling} if
 it is $N$-doubling for some $N$.\par

For a bounded set $S\subset X$, 
we denote by $\mathcal{B}_X(S, r)$ the minimum integer $N$ such that 
$S$ can be covered by at most $N$ bounded sets with diameter at most $r$. 
We denote by $\mathscr{A}(X)$ 
the set of all $\beta\in (0, \infty)$
for which
here exists $C\in (0,\infty)$ such that
for every bounded set $S\subset X$, 
and for every $r\in (0, \infty)$,  
we have $\mathcal{B}_X(S, r)\le C(\delta(S)/r)^{\beta}$. 

The \emph{Assouad dimension $\dim_AX$ of $X$} is defined  as $\inf (\mathscr{A}(X))$ if
 the set $\mathscr{A}(X)$ is non-empty; otherwise, $\dim_AX=\infty$. 
We denote by $\mathscr{B}(X)$ the set of  all $\beta\in (0,\infty)$ for which 
there exists $C\in (0, \infty)$ such that for every finite subset $A$ of $X$ 
we have $\card(A)\le C(\delta(A)/\alpha(A))^{\beta}$, where $\card(A)$ stands for the cardinality of $A$. 

By definitions, 
we obtain the next two propositions. 
\begin{prop}\label{prop:Assouaddim}
For every metric space $X$, 
the following are equivalent:
\begin{enumerate}
 \item  $X$ is doubling;
\item $\mathscr{A}(X)$ is non-empty;
\item  $\mathscr{B}(X)$ is non-empty;  
\item  $\dim_AX<\infty$. 
\end{enumerate}
\end{prop}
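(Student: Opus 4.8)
The plan is to prove the chain of implications $(1) \To (2) \To (4) \To (1)$ together with the equivalence $(2) \Leftrightarrow (3)$, drawing on the elementary relations between the three combinatorial quantities attached to $X$: the doubling constant, the covering number $\mathcal{B}_X(S, r)$, and the separated-set cardinality bound underlying $\mathscr{B}(X)$.

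First I would show $(1) \To (2)$. Suppose $X$ is $N$-doubling. The point is that iterating the doubling property gives a uniform covering bound: given a bounded set $S$, one halves $\di$ repeatedly, so after $k$ steps $S$ is covered by at most $N^k$ sets of diameter at most $2^{-k}\delta(S)$. Choosing $k$ with $2^{-k}\delta(S) \le r < 2^{-(k-1)}\delta(S)$ yields $\mathcal{B}_X(S, r) \le N^k \le N \cdot (2\delta(S)/r)^{\log_2 N}$, so any $\beta > \log_2 N$ (indeed $\beta = \log_2 N$) lies in $\mathscr{A}(X)$, whence $\mathscr{A}(X) \neq \emptyset$. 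Then $(2) \To (4)$ is immediate from the definition $\dim_A X = \inf(\mathscr{A}(X))$, since a nonempty subset of $(0,\infty)$ bounded below by $0$ has finite infimum. For $(4) \To (1)$, if $\dim_A X < \infty$ pick $\beta \in \mathscr{A}(X)$ with associated constant $C$; applied to a bounded set $S$ with $r = \delta(S)/2$ this gives a cover of $S$ by at most $C \cdot 2^\beta$ sets of diameter at most $\delta(S)/2$, and picking one point from each nonempty piece produces the finite set $F$ with $S \subset B(F, \delta(S)/2)$ and $\card(F) \le \lceil C \cdot 2^\beta \rceil$, so $X$ is doubling. This closes the cycle $(1) \Leftrightarrow (2) \Leftrightarrow (4)$.

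It remains to connect $(3)$. For $(2) \To (3)$: if $\beta \in \mathscr{A}(X)$ with constant $C$, take a finite subset $A$ with $\alpha(A) = a > 0$ and $\delta(A) = D$. Cover $A$ by at most $\mathcal{B}_X(A, a/2) \le C(2D/a)^\beta$ sets of diameter at most $a/2$; each such set contains at most one point of $A$ because any two distinct points of $A$ are at distance $\ge a > a/2$. Hence $\card(A) \le C \cdot 2^\beta (D/a)^\beta$, giving $\beta \in \mathscr{B}(X)$ (after absorbing the constant $2^\beta$). Conversely, for $(3) \To (2)$, suppose $\beta \in \mathscr{B}(X)$ with constant $C$. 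Given a bounded $S$ and $r > 0$, choose a maximal $r$-separated subset $A \subseteq S$; by maximality the balls $U(x, r)$, $x \in A$, cover $S$, so each point of $A$ together with the part of $S$ within distance $r$ gives a cover of $S$ by $\card(A)$ sets of diameter at most $2r$, and $\delta(A) \le \delta(S)$, so $\mathcal{B}_X(S, 2r) \le \card(A) \le C(\delta(S)/r)^\beta$. Rescaling $r \mapsto r/2$ shows $\beta \in \mathscr{A}(X)$ up to adjusting $C$ by a factor $2^\beta$.

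The argument is almost entirely bookkeeping; the one place that needs a little care — the ``main obstacle'' — is handling the edge cases in the covering/separation translations, namely sets $S$ with $\delta(S) = 0$ (singletons, where the inequalities hold trivially once we agree on conventions) and finite sets $A$ with $\card(A) \le 1$ (where $\alpha(A)$ may be taken as $+\infty$ or the inequality is vacuous), and making sure the quantifier ``there exists $C$'' is genuinely uniform over all bounded sets — which it is, since every constant produced above depends only on $N$, $\beta$, and $C$, not on $S$ or $A$. Once these conventions are fixed, the four statements are seen to be equivalent.
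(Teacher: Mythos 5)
Your architecture is the standard one, and since the paper disposes of Proposition \ref{prop:Assouaddim} with the single phrase ``by definitions'', there is no official proof to deviate from. The implications $(2)\Rightarrow(3)$, $(3)\Rightarrow(2)$, $(2)\Rightarrow(4)$ and $(4)\Rightarrow(1)$ in your write-up are correct: the covering/separation translations, the maximal-separated-set argument, and the treatment of degenerate sets are all fine (in $(3)\Rightarrow(2)$ you should add a sentence explaining why a maximal $r$-separated subset of a bounded set is finite, namely by applying the $\mathscr{B}(X)$ bound to its finite subsets first).

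The genuine gap is in $(1)\Rightarrow(2)$. The paper's definition of $N$-doubling asks that a bounded set $S$ satisfy $S\subset B(F,\delta(S)/2)$ with $\card(F)\le N$, i.e.\ that $S$ be covered by $N$ closed balls of \emph{radius} $\delta(S)/2$. Such a ball has diameter up to $\delta(S)$, not $\delta(S)/2$, so one application of the hypothesis need not shrink the diameters of the pieces at all, and your claim that after $k$ iterations $S$ is covered by $N^{k}$ sets of diameter at most $2^{-k}\delta(S)$ does not follow; the next application, fed these pieces, again returns balls of radius up to $\delta(S)/2$. This is not a bookkeeping issue: read literally, the implication is false. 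Take $X=\{c\}\cup\{x_i\}_{i\in\nn}$ with $d(c,x_i)=1/2$ and $d(x_i,x_j)=1$ for $i\neq j$. Any bounded $S$ containing two of the $x_i$ has $\delta(S)=1$ and satisfies $S\subset B(c,1/2)=X$, so $X$ is $2$-doubling in the paper's sense; yet $\{x_1,\dots,x_n\}$ is a $1$-separated set of diameter $1$ and cardinality $n$, so $\mathscr{B}(X)=\emptyset$, $\mathscr{A}(X)=\emptyset$ and $\dim_AX=\infty$. To make $(1)\Rightarrow(2)$ go through you must use the intended form of the doubling condition --- for instance, covering by sets of diameter at most $\delta(S)/2$, or by balls of radius $\delta(S)/4$ --- under which your geometric iteration of diameters is legitimate. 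As it stands, the one step of the proposition that is not pure bookkeeping is exactly the step your proof elides.
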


\begin{prop}\label{prop:Assouad}
For every metric space $X$, 
we have 
\[
\dim_AX=\inf(\mathscr{B}(X)). 
\]
\end{prop}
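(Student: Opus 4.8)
The plan is to prove the set equality $\mathscr{A}(X)=\mathscr{B}(X)$. Granting this, and using that $\dim_AX=\inf\mathscr{A}(X)$ with the convention $\inf\emptyset=\infty$ (which is exactly the convention built into the definition of $\dim_A$), one gets $\dim_AX=\inf\mathscr{B}(X)$ at once, the case of both sets being empty being consistent with Proposition~\ref{prop:Assouaddim}. Each of the two inclusions is an instance of the standard duality between economical covers by sets of small diameter and maximal separated subsets; the proof is therefore short and combinatorial, and the one point that needs genuine (if mild) attention is the finiteness of the separated sets produced in the second inclusion, which has to be extracted from the defining inequality of $\mathscr{B}(X)$ itself before any counting can be done. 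The usual degenerate cases---one-point sets, and radii $r$ exceeding the diameter of the set under consideration---are harmless and will be disposed of by working in the ranges where the two conditions carry content.

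For the inclusion $\mathscr{A}(X)\subseteq\mathscr{B}(X)$, I would fix $\beta\in\mathscr{A}(X)$ with an associated constant $C$ and a finite set $A\subseteq X$ with $\card(A)\ge 2$ (nothing is required when $\card(A)\le 1$). For any $r$ with $0<r<\alpha(A)$---note $\alpha(A)\le\delta(A)$, so this $r$ is in the meaningful range---the definition of $\mathcal{B}_X$ gives a cover of $A$ by at most $\mathcal{B}_X(A,r)\le C(\delta(A)/r)^{\beta}$ bounded sets of diameter at most $r$. Since $r<\alpha(A)$, no member of such a cover can contain two distinct points of $A$, so $\card(A)\le\mathcal{B}_X(A,r)\le C(\delta(A)/r)^{\beta}$. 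Letting $r\uparrow\alpha(A)$ yields $\card(A)\le C(\delta(A)/\alpha(A))^{\beta}$, and hence $\beta\in\mathscr{B}(X)$ with the same constant $C$.

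For the reverse inclusion $\mathscr{B}(X)\subseteq\mathscr{A}(X)$, I would fix $\beta\in\mathscr{B}(X)$ with constant $C$, a bounded set $S\subseteq X$ with $\delta(S)>0$, and $r$ in the meaningful range $0<r\le\delta(S)$ (for $r>\delta(S)$ the set $S$ is already a one-element cover of itself). Applying the defining inequality of $\mathscr{B}(X)$ to the finite subsets (of at least two elements) of any $(r/2)$-separated subset of $S$ shows that every such subset is finite of cardinality at most $C(2\delta(S)/r)^{\beta}$; I may therefore choose $A\subseteq S$ a $(r/2)$-separated subset of maximal cardinality, and the constraint $r\le\delta(S)$ forces $\card(A)\ge 2$. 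By maximality, every $x\in S$ lies within distance $r/2$ of some point of $A$, so $S\subseteq\bigcup_{a\in A}U(a,r/2)$ exhibits $S$ as covered by $\card(A)$ bounded sets of diameter at most $r$; thus $\mathcal{B}_X(S,r)\le\card(A)\le C(\delta(A)/\alpha(A))^{\beta}\le 2^{\beta}C(\delta(S)/r)^{\beta}$, so $\beta\in\mathscr{A}(X)$ with constant $2^{\beta}C$. This completes the two inclusions, whence $\mathscr{A}(X)=\mathscr{B}(X)$ and $\dim_AX=\inf\mathscr{B}(X)$; the only real subtlety, as noted, is guaranteeing that the maximal $(r/2)$-separated set in this paragraph is finite so that the cardinality count is legitimate.
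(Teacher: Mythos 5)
Your proof is correct, and it is exactly the standard covering--packing duality that the paper is implicitly invoking when it asserts that Proposition~\ref{prop:Assouad} follows ``by definitions'' without giving any argument; in fact you prove the stronger set equality $\mathscr{A}(X)=\mathscr{B}(X)$, and you rightly isolate the one non-trivial point, namely that the defining inequality of $\mathscr{B}(X)$ must first be used to show that $(r/2)$-separated subsets of $S$ are finite before a maximal one can be selected. Your restriction to the ``meaningful ranges'' ($\card(A)\ge 2$ and $0<r\le\delta(S)$) is the only sensible reading of the paper's definitions, which taken literally would force $\mathcal{B}_X(S,r)\le C(\delta(S)/r)^{\beta}$ even when the right-hand side is below $1$, so no criticism attaches to your handling of the degenerate cases.
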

The \emph{lower Assouad dimension $\dim_{LA}X$ of $X$} is defined as 
the supremum 
of all $\beta\in (0, \infty)$ for which 
there exists 
$C\in (0, \infty)$ such that for every finite set $S$ in $X$
we have $\card(S)\ge C(\delta(S)/\alpha(S))^{\beta}$.

\begin{prop}
For every metric space $X$, we have 
\[
\dim_{LA}X\le \dim_AX. 
\]
\end{prop}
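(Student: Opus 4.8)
The plan is a short contradiction argument that pinches the cardinalities of finite subsets of $X$ between the two competing power laws carried by $\dim_AX$ and $\dim_{LA}X$. First I would clear away the degenerate cases: if $X$ is not doubling then $\dim_AX=\infty$ by Proposition~\ref{prop:Assouaddim} and the inequality is vacuous, and if $X$ is finite it is immediate; so assume henceforth that $X$ is an \emph{infinite} doubling metric space, whence $\mathscr{B}(X)\neq\emptyset$ by Proposition~\ref{prop:Assouaddim}.

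Suppose, toward a contradiction, that $\dim_{LA}X>\dim_AX$. By Proposition~\ref{prop:Assouad} we have $\dim_AX=\inf(\mathscr{B}(X))$, while $\dim_{LA}X$ is the supremum of those exponents $\gamma\in(0,\infty)$ for which some $c>0$ satisfies $\card(S)\ge c\,(\delta(S)/\alpha(S))^{\gamma}$ for every finite $S\subset X$. Since this supremum exceeds $\inf(\mathscr{B}(X))$, I may fix such an exponent $\gamma$ together with its constant $c>0$, and then fix $\beta\in\mathscr{B}(X)$ with $\beta<\gamma$ together with a constant $C>0$ with $\card(A)\le C\,(\delta(A)/\alpha(A))^{\beta}$ for every finite $A\subset X$.

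The decisive step is to apply both estimates to the \emph{same} finite sets. For every finite $S\subset X$ with $\card(S)\ge2$ one has $\delta(S)\ge\alpha(S)>0$ and
\[
c\left(\frac{\delta(S)}{\alpha(S)}\right)^{\gamma}\le\card(S)\le C\left(\frac{\delta(S)}{\alpha(S)}\right)^{\beta}.
\]
As $\delta(S)/\alpha(S)\ge1$ and $\gamma-\beta>0$, this yields $(\delta(S)/\alpha(S))^{\gamma-\beta}\le C/c$, hence a uniform bound $\delta(S)/\alpha(S)\le M:=(C/c)^{1/(\gamma-\beta)}$. Feeding this back into the upper estimate gives $\card(S)\le CM^{\beta}$ for every finite $S\subset X$, so $\card(X)\le\max\{1,CM^{\beta}\}<\infty$, contradicting the infinitude of $X$. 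Therefore $\dim_{LA}X\le\dim_AX$.

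I do not foresee a serious obstacle; the points that want attention are the bookkeeping with $\inf(\mathscr{B}(X))$ and the defining supremum when extracting the exponents $\beta<\gamma$, and the key observation — which is really the whole content of the proof — that combining the lower and upper power laws on one and the same family of finite subsets forces the ratio $\delta(S)/\alpha(S)$, and hence $\card(S)$, to be uniformly bounded over all finite $S$.
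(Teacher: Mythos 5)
The paper states this proposition with no proof at all (it is presented as an immediate consequence of the definitions), so there is nothing of the author's to compare against; your pinching argument is the natural way to supply one, and its core is correct. For an infinite doubling space the contradiction is sound: taking $\gamma$ with constant $c$ witnessing a lower power law, and $\beta\in\mathscr{B}(X)$ with $\beta<\gamma$ and constant $C$, applying both bounds to the same finite set $S$ forces $\delta(S)/\alpha(S)\le (C/c)^{1/(\gamma-\beta)}$ and hence a uniform bound on $\card(S)$, which is impossible for infinite $X$; the non-doubling case is indeed vacuous by Proposition~\ref{prop:Assouaddim}.

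The one step that deserves more than the word ``immediate'' is the finite case, and in fact it is exactly where the statement, read literally against the paper's definitions, breaks down. If $X$ is finite with at least two points, then $\delta(S)/\alpha(S)$ is bounded by some $M\ge 1$ over all finite $S$ with at least two points, so for \emph{every} $\beta\in(0,\infty)$ the constant $C=M^{-\beta}$ gives $\card(S)\ge 1\ge C\left(\delta(S)/\alpha(S)\right)^{\beta}$; hence $\dim_{LA}X=\infty$ while $\dim_AX=0$. (An infinite bounded uniformly separated space exhibits the same degeneracy, but there $X$ fails to be doubling and $\dim_AX=\infty$ as well, so your argument still covers it; your own computation shows that any infinite space with $\delta(S)/\alpha(S)$ uniformly bounded has $\mathscr{B}(X)=\emptyset$.) So the only genuine exceptions are finite spaces, and the proposition must be read with the usual convention that excludes them, or with the scale-restricted definition of the lower dimension under which isolated points force $\dim_{LA}X=0$. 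Your main argument needs no repair; just replace ``it is immediate'' with an explicit remark on this convention rather than asserting the finite case.
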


\subsection{Gromov--Hausdorff distance}
For a metric space $Z$,
 and  for subsets $S, T\subset Z$, 
we define the Hausdorff distance $d_H(S,T;Z)$
 between $S$ and $T$ in $Z$ as the infimum $r\in (0, \infty)$ for which
$S\subset B(T, r)$ and $T\subset B(S, r)$. 
For two metric spaces $X$ and $Y$, 
the \emph{Gromov--Hausdorff distance} $d_{GH}(X,Y)$ 
between $X$ and $Y$ is 
defined as the infimum of  all values  $d_H(i(X), j(Y); Z)$, 
where $Z$ is a metric space and
 $i:X\to Z$ and $j:Y\to Z$ are isometric embeddings. 

To deal with the Gromov--Hausdorff distance, 
we use the so-called approximation maps. 
For $\epsilon\in (0, \infty)$, 
and for metric spaces $X$ and $Y$, 
a pair $(f,g)$ with $f:X\to Y$ and $g:Y\to X$ is said to be 
an \emph{$\epsilon$-approximation} 
if the following conditions hold: 
\begin{enumerate}
\item for all $x, y\in X$, we have $|d_X(x,y)-d_Y(f(x),f(y))|<\epsilon$; 
\item for all $x,y\in Y$, we have $|d_Y(x,y)-d_X(g(x), g(y))|<\epsilon$; 
\item for each $x\in X$ and for each $y\in Y$, 
we have $d_X(g\circ f(x),x)<\epsilon$ 
and $d_Y(f\circ g(x), x)<\epsilon$. 
\end{enumerate}

By the definitions of the Gromov--Hausdorff distance, 
we obtain:
\begin{prop}\label{prop:distance}
Let $X$ and $Y$ be metric spaces. 
Then for every $h\in (0, \infty)$ we have $d_{GH}(hX, hY)=hd_{GH}(X,Y)$. 
\end{prop}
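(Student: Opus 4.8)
The plan is to reduce the statement to two elementary observations about scaling. First I would record that the Hausdorff distance itself scales: if $Z$ is a metric space and $S, T \subset Z$, then, regarding $S$ and $T$ as subsets of $hZ=(Z, hd_Z)$, we have $d_H(S,T;hZ)=h\, d_H(S,T;Z)$, because $B(T,r;hZ)=B(T,r/h;Z)$, so the set of admissible radii for $hZ$ is exactly $h$ times that for $Z$. Second I would note that if $i:X\to Z$ and $j:Y\to Z$ are isometric embeddings, then the very same maps are isometric embeddings $i:hX\to hZ$ and $j:hY\to hZ$, since multiplying the metrics on both the domain and the codomain by the same positive constant preserves the isometric-embedding condition.

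Combining these, for any metric space $Z$ and isometric embeddings $i:X\to Z$, $j:Y\to Z$, the pair $i:hX\to hZ$, $j:hY\to hZ$ witnesses
\[
d_{GH}(hX,hY)\le d_H(i(X),j(Y);hZ)=h\, d_H(i(X),j(Y);Z).
\]
Taking the infimum over all such triples $(Z,i,j)$ yields $d_{GH}(hX,hY)\le h\, d_{GH}(X,Y)$.

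For the reverse inequality I would simply apply the inequality just proved with $hX,hY$ in place of $X,Y$ and with $h^{-1}$ in place of $h$, using $h^{-1}(hX)=X$ and $h^{-1}(hY)=Y$: this gives $d_{GH}(X,Y)\le h^{-1}d_{GH}(hX,hY)$, i.e. $h\, d_{GH}(X,Y)\le d_{GH}(hX,hY)$. Together with the previous paragraph this proves the claimed equality.

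There is no genuine obstacle here; the only point needing a little care is that the assignment $Z\mapsto hZ$ is a bijection (up to isometry) of the class of metric spaces onto itself, so that the infimum defining $d_{GH}(hX,hY)$ is really taken over the same family of ambient spaces, merely rescaled — this is exactly what legitimizes the symmetric ``apply it twice'' step. Alternatively one could avoid the second application of the lemma by noting directly that $Z\mapsto hZ$ induces a radius-scaling bijection between the triples $(Z,i,j)$ realizing embeddings of $X,Y$ and those realizing embeddings of $hX,hY$, whence the infimum of $h\, d_H$ over the first family equals $h$ times the infimum over the second; but the two-applications argument is cleaner to write.
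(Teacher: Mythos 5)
Your proof is correct; the paper offers no written argument for this proposition, stating only that it follows from the definition of the Gromov--Hausdorff distance, and your argument is precisely the routine verification being left implicit (scaling of the Hausdorff distance, preservation of isometric embeddings under rescaling, and the symmetric application with $h^{-1}$ for the reverse inequality). No discrepancy to report.
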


The next two claims can be seen in  Sections 7.3 and 7.4 in \cite{BBI}. 
\begin{lem}\label{lem:approx}
If  metric spaces $X$ and $Y$ satisfy $d_{GH}(X, Y)\le \epsilon$, 
then there exists a $2\epsilon$-approximation between them.  
\end{lem}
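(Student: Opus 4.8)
The plan is to unwind the definition of the Gromov--Hausdorff distance so as to place $X$ and $Y$ inside a single ambient metric space in which they are mutually $\epsilon$-dense, and then to read off the approximation maps as near-nearest-point assignments between them.

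In detail: by the definition of $d_{GH}$, the hypothesis $d_{GH}(X,Y)\le \epsilon$ provides, for an arbitrarily small auxiliary error, a metric space $Z$ together with isometric embeddings $i\colon X\to Z$ and $j\colon Y\to Z$ such that $d_H(i(X),j(Y);Z)$ is (essentially) at most $\epsilon$. Identifying $X$ with $i(X)$ and $Y$ with $j(Y)$, we regard $X,Y\subset Z$, so that $d_Z$ restricts to $d_X$ on $X$ and to $d_Y$ on $Y$, and $X\subset B(Y,\epsilon;Z)$, $Y\subset B(X,\epsilon;Z)$. Then I would define $f\colon X\to Y$ by choosing, for each $x\in X$, a point $f(x)\in Y$ with $d_Z(x,f(x))\le\epsilon$, and define $g\colon Y\to X$ symmetrically with $d_Z(y,g(y))\le\epsilon$. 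The bookkeeping of the auxiliary error, which is what promotes these ``$\le$'' to the strict inequalities in the definition of an approximation, is the content of Sections~7.3 and~7.4 of \cite{BBI}; an alternative route, using correspondences and the identity $2d_{GH}(X,Y)=\inf_R\operatorname{dis}(R)$, produces the same maps.

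It then remains to verify that $(f,g)$ is a $2\epsilon$-approximation, and all three conditions follow immediately from the triangle inequality in $Z$ together with the fact that $d_Z$ restricts to $d_X$ and to $d_Y$. For conditions (1) and (2): given $x,x'\in X$,
\[
|d_X(x,x')-d_Y(f(x),f(x'))|=|d_Z(x,x')-d_Z(f(x),f(x'))|\le d_Z(x,f(x))+d_Z(x',f(x'))\le 2\epsilon,
\]
and the estimate for $g$ on $Y$ is identical. For condition (3): given $x\in X$,
\[
d_X(g(f(x)),x)=d_Z(g(f(x)),x)\le d_Z(g(f(x)),f(x))+d_Z(f(x),x)\le 2\epsilon,
\]
and similarly $d_Y(f(g(y)),y)\le d_Z(f(g(y)),g(y))+d_Z(g(y),y)\le 2\epsilon$ for $y\in Y$.

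There is no genuinely hard step; the single point demanding care is the constant. Passing through a common ambient space costs a factor of $2$, because each of the estimates above invokes the Hausdorff bound twice, once at each of the two relevant points; and the auxiliary error in realizing the Gromov--Hausdorff distance---which is needed since the infimum in the definition of $d_{GH}$ need not be attained---must be fixed before the maps $f$ and $g$ are selected.
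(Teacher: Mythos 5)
Your overall strategy is the standard one and is essentially what the paper itself delegates to Sections~7.3--7.4 of \cite{BBI}: realize the Gromov--Hausdorff distance in a common ambient space $Z$, choose near-nearest-point maps $f$ and $g$, and run the triangle inequality. The three displayed estimates are correct, and the verification of conditions (1)--(3) is exactly right. So in substance this matches the argument behind the cited lemma.

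The one step you leave open is precisely the one that cannot be closed as you describe it, namely the ``bookkeeping of the auxiliary error.'' Since the infimum defining $d_{GH}$ need not be attained, the hypothesis $d_{GH}(X,Y)\le\epsilon$ only yields, for each $\delta>0$, an ambient $Z$ with $d_H(X,Y;Z)\le\epsilon+\delta$; your estimates then give bounds of the form $\le 2\epsilon+2\delta$, and no choice of $\delta$ ``promotes'' these to the strict inequalities $<2\epsilon$ demanded by the paper's definition of a $2\epsilon$-approximation --- the auxiliary error degrades the constant rather than tightening it. In fact the statement is false at the boundary: take $X=\{p\}$ and $Y=\{a,b\}$ with $d_Y(a,b)=2\epsilon$; then $d_{GH}(X,Y)=\epsilon$, but any $g\colon Y\to X$ is constant, so $|d_Y(a,b)-d_X(g(a),g(b))|=2\epsilon$ and condition (2) fails for every pair $(f,g)$. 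This defect is inherited from the lemma as stated (the reference assumes $d_{GH}(X,Y)<\epsilon$, and every application in the paper invokes the lemma only under a strict inequality, e.g.\ $d_{GH}(u_NA_N,P)<\alpha_P(S)/20$). The fix is to run your argument from the strict hypothesis: if $d_{GH}(X,Y)<\epsilon$, choose $Z$ with $d_H(X,Y;Z)<\epsilon$, pick $f(x)$ and $g(y)$ at distance $<\epsilon$, and all of your inequalities become strict, giving a genuine $2\epsilon$-approximation. With that adjustment the proof is complete; as written, the final paragraph asserts the opposite of what the error term does.
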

\begin{prop}\label{prop:unidiameter}
For all metric spaces $X$ and $Y$, we have 
\[
|\delta(X)-\delta(Y)|\le 2d_{GH}(X,Y). 
\]
\end{prop}

We say that a sequence $\{(X_i, p_i)\}_{i\in \nn}$ of pointed metric spaces
 \emph{converges to  $(Y, q)$ in the pointed Gromov--Hausdorff topology } if 
there exist a sequence  $\{\epsilon_{i}\}_{i\in \nn}$ in $(0, \infty)$ with 
$\lim_{i\to \infty}\epsilon_{i}=0$, and a sequence $\{(f_i, g_i)\}_{i\in \nn}$ of
 $\epsilon_{i}$-approximation maps between
  $X_i$ and $Y$ with $f_i(p_i)=q$ and $g(q)=p_i$. 

\subsection{Ultralimits}\label{subsec:ult}
Let $\mathcal{U}$ be  a non-principal ultrafilter on $\nn$. 
For a sequence  $\{a_i\}_{i\in \nn}$ in $\rr$, 
a real number $u$ is a \emph{ultralimit} of $\{a_i\}_{i\in \nn}$ with respect to $\uu$ if
for every $\epsilon\in (0, \infty)$ 
we have $\{\, i\in \nn\mid |a_i-u|<\epsilon\, \}\in \uu$.  
In this case, we write $\lim_{\uu}a_i=u$. 
Note that an ultralimit of a bounded sequence in $\rr$ always uniquely exists. 

Let $\{(X_i, p_i)\}_{i\in \nn}$ be a sequence of pointed metric spaces. 
We put 
\[
B(\{(X_i, p_i)\}_{i\in \nn})=\left\{\,\{x_i\}_{i\in \nn}\in \prod_{i\in \nn}X_i\mid \sup_{i\in \nn}d(p_i, x_i)<\infty\, \right\}.
\] 
Define an equivalence relation $R_{\uu}$ on $B(\{(X_i, p_i)\}_{i\in \nn})$ in such a way that 
$\{x_i\}_{i\in\nn}R_{\uu}\{y_i\}_{i\in \nn}$ if and only if
 $\lim_{\uu}d_{X_i}(x_i, y_i)=0$. 
We denote by $[\{x_i\}_{i\in \nn}]$ 
the equivalence class of $\{x_i\}_{i\in \nn}$. 
We denote by  $\lim_{\uu}(X_i, p_i)$ 
the metric space $B(\{(X_i, p_i)\}_{i\in \nn})/R_{\uu}$ equipped with 
the metric $d_{\lim_{\uu}(X_i, p_i)}$ defined by 
\[
d_{\lim_{\uu}(X_i,p_i)}(x, y)=\lim_{\uu}d_{X_i}(x_i, y_i), 
\] 
where $x=[\{x_i\}_{i\in \nn}]$ and $y=[\{y_i\}_{i\in \nn}]$. 
We call $\lim_{\uu}(X_i,p_p)$ 
the \emph{ultralimit} of the sequence $\{(X_i, p_i)\}_{i\in \nn}$ with respect to $\uu$. 

The following can be seen   in \cite[I.5.52]{BH} or \cite[Proposition 3.2]{KL}. 
\begin{lem}\label{lem:ultiso}
Let $\{(X_i, p_i)\}_{i\in \nn}$ be a sequence of pointed compact metric spaces. 
If $\{(X_i, p_i)\}_{i\in \nn}$ converges to a pointed compact metric space $(X, p)$, 
then $\lim_{\uu}(X_i, p_i)$ and $(X, p)$ are isometric to each other. 
\end{lem}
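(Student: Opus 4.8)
The plan is to build an explicit surjective isometry from $\lim_\uu(X_i,p_i)$ onto $(X,p)$ out of the approximation maps that the hypothesis provides. First I would unwind the assumption of pointed Gromov--Hausdorff convergence: there exist a sequence $\{\epsilon_i\}_{i\in\nn}$ in $(0,\infty)$ with $\epsilon_i\to 0$ and, for each $i$, an $\epsilon_i$-approximation $(f_i,g_i)$ between $X_i$ and $X$ with $f_i(p_i)=p$ and $g_i(p)=p_i$. Since an $\epsilon$-approximation here satisfies conditions (1)--(3) globally, this forces $d_{GH}(X_i,X)\to 0$, and hence Proposition \ref{prop:unidiameter} gives $M:=\sup_{i\in\nn}\delta(X_i)<\infty$. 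In particular every $\{x_i\}_{i\in\nn}\in\prod_{i\in\nn}X_i$ lies in $B(\{(X_i,p_i)\}_{i\in\nn})$, because $d_{X_i}(p_i,x_i)\le M$ for all $i$.

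The next and only essential point is that for any $\{x_i\}_{i\in\nn}\in\prod_{i\in\nn}X_i$ the ultralimit $\lim_\uu f_i(x_i)$ exists and is unique in $X$; this is exactly where compactness of $X$ is used. Existence follows by covering $X$ with finitely many balls of radius $\epsilon$ and using that $\uu$ is an ultrafilter closed under finite intersections, so one of these balls contains $x_i$ for a $\uu$-large set of indices, for every $\epsilon$; uniqueness follows from the Hausdorff property. I would then define
\[
\Phi\colon \lim_\uu(X_i,p_i)\to X,\qquad \Phi\bigl([\{x_i\}_{i\in\nn}]\bigr)=\lim_\uu f_i(x_i).
\]
That $\Phi$ does not depend on the chosen representative follows from condition (1): if $\lim_\uu d_{X_i}(x_i,y_i)=0$, then from $|d_X(f_i(x_i),f_i(y_i))-d_{X_i}(x_i,y_i)|<\epsilon_i\to 0$ we get $\lim_\uu d_X(f_i(x_i),f_i(y_i))=0$, and two sequences whose distance tends to $0$ along $\uu$ have the same ultralimit.

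It remains to verify that $\Phi$ is a pointed isometric bijection. Preservation of distances is the same computation once more: for $x=[\{x_i\}_{i\in\nn}]$ and $y=[\{y_i\}_{i\in\nn}]$ one has $d_X(\Phi(x),\Phi(y))=\lim_\uu d_X(f_i(x_i),f_i(y_i))=\lim_\uu d_{X_i}(x_i,y_i)=d_{\lim_\uu(X_i,p_i)}(x,y)$, using condition (1) together with the elementary identity $d\bigl(\lim_\uu a_i,\lim_\uu b_i\bigr)=\lim_\uu d(a_i,b_i)$. Injectivity is then automatic. For surjectivity, given $z\in X$ set $x_i:=g_i(z)$; condition (2) together with $g_i(p)=p_i$ bounds $d_{X_i}(p_i,x_i)$ uniformly in $i$, so $[\{x_i\}_{i\in\nn}]$ is a legitimate point, and condition (3) gives $d_X(f_i(g_i(z)),z)<\epsilon_i\to 0$, whence $\Phi([\{x_i\}_{i\in\nn}])=z$. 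Finally $\Phi([\{p_i\}_{i\in\nn}])=\lim_\uu f_i(p_i)=\lim_\uu p=p$. Thus $\Phi$ is the desired pointed isometry, and the only part requiring any care is the existence and uniqueness of $\lim_\uu f_i(x_i)$ in the compact space $X$; the rest is routine bookkeeping with the approximation inequalities.
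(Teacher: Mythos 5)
Your proof is correct, and since the paper does not prove this lemma itself but cites \cite[I.5.52]{BH} and \cite[Proposition 3.2]{KL}, your argument is essentially the standard one found there: build the map $[\{x_i\}]\mapsto\lim_{\uu}f_i(x_i)$ from the approximation maps, using compactness of $X$ for existence of the ultralimit and conditions (1)--(3) for well-definedness, isometry, and surjectivity. The only cosmetic slip is writing ``$x_i$'' where you mean ``$f_i(x_i)$'' in the covering argument; nothing substantive is missing.
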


We say that  a sequence $\{X_i\}_{i\in \nn}$ of metric spaces is 
\emph{uniformly bounded} if there exists $M\in (0, \infty)$ 
with  $\sup_{i\in \nn}\delta(X_i)\le M$. 

For every uniformly bounded sequence $\{X_i\}_{i\in \nn}$, 
and for every choice $\{p_i\}_{i\in \nn}\in \prod_{i\in \nn}X_i$ of base points, 
we have 
$B(\{(X_i, p_i)\}_{i\in \nn})=\prod_{i\in\nn}X_i$. 
 Therefore Lemma \ref{lem:ultiso} implies:
\begin{lem}\label{lem:inyoulemma}
Let $\{X_i\}_{i\in \nn}$ be a uniformly bounded sequence of compact metric spaces. 
If  $\{X_i\}_{i\in \nn}$ converges to a compact metric space $L$,  
then for every choice $\{p_i\}_{i\in \nn}\in \prod_{i\in \nn}X_i$ of base points, 
the metric spaces $\lim_{\uu}(X_i, p_i)$ and $L$ are isometric to each other. 
\end{lem}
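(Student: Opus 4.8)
The plan is to reduce the statement to the pointed Gromov--Hausdorff situation already handled by Lemma~\ref{lem:ultiso}, exploiting the fact — noted just before the lemma — that for a uniformly bounded sequence $B(\{(X_i,q_i)\}_{i\in\nn})=\prod_{i\in\nn}X_i$ regardless of the base points.

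The first observation is that, for such a sequence, the ultralimit is independent of the choice of base points \emph{as a metric space}. Indeed, since $\sup_{i}\delta(X_i)<\infty$, we have $B(\{(X_i,q_i)\}_{i\in\nn})=\prod_{i\in\nn}X_i$ for every $\{q_i\}\in\prod_{i\in\nn}X_i$, and both the relation $R_{\uu}$ and the metric $d_{\lim_{\uu}(X_i,q_i)}$ are defined in terms of the $d_{X_i}$ alone; only the distinguished point $[\{q_i\}]$ depends on $\{q_i\}$. Hence it suffices to produce \emph{one} sequence $\{p_i\}\in\prod_{i\in\nn}X_i$ for which $\lim_{\uu}(X_i,p_i)$ is isometric to $L$.

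To choose such $\{p_i\}$, I would use the convergence $X_i\to L$. Fix $\epsilon_i\to0$ and, by Lemma~\ref{lem:approx}, $\epsilon_i$-approximations $(f_i,g_i)$ between $X_i$ and $L$ (note $L\neq\emptyset$, since each $X_i\neq\emptyset$). Pick any $z_0\in L$ and set $p_i:=g_i(z_0)$. Replace $f_i$ by the map $\tilde f_i\colon X_i\to L$ that equals $f_i$ off $p_i$ and sends $p_i$ to $z_0$, keeping $g_i$ unchanged. Since $d_L(f_i(p_i),z_0)=d_L(f_i(g_i(z_0)),z_0)<\epsilon_i$, altering $f_i$ only at $p_i$, and only by less than $\epsilon_i$, perturbs each defining inequality of an approximation by $O(\epsilon_i)$, so $(\tilde f_i,g_i)$ is a $C\epsilon_i$-approximation for an absolute constant $C$, and by construction $\tilde f_i(p_i)=z_0$ and $g_i(z_0)=p_i$. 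Thus $\{(X_i,p_i)\}_{i\in\nn}$ converges to $(L,z_0)$ in the pointed Gromov--Hausdorff topology. As the $X_i$ and $L$ are compact, Lemma~\ref{lem:ultiso} gives that $\lim_{\uu}(X_i,p_i)$ is isometric to $(L,z_0)$, hence to $L$; together with the previous paragraph this yields the claim for every choice of base points.

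I expect the main obstacle to be conceptual rather than computational: pointed convergence of $\{(X_i,p_i)\}$ cannot be expected for an arbitrarily prescribed $\{p_i\}$ — the points $f_i(p_i)$ may oscillate in $L$ — so one must first invoke the base-point independence above in order to be free to take $p_i=g_i(z_0)$. Once that is done, checking that a one-point, $\epsilon$-small perturbation of an $\epsilon$-approximation is again an $O(\epsilon)$-approximation is routine. (Alternatively, one can bypass Lemma~\ref{lem:ultiso} and verify directly that $[\{x_i\}_{i\in\nn}]\mapsto\lim_{\uu}f_i(x_i)$ is a well-defined surjective isometry from $\lim_{\uu}(X_i,p_i)$ onto $L$, using $|d_L(f_i(x_i),f_i(y_i))-d_{X_i}(x_i,y_i)|<\epsilon_i\to0$ for well-definedness and isometry and $d_L(f_i(g_i(z)),z)<\epsilon_i\to0$ for surjectivity.)
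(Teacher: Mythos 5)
Your proposal is correct and follows essentially the same route as the paper: the paper derives the lemma from exactly the observation that uniform boundedness makes $B(\{(X_i,p_i)\}_{i\in\nn})=\prod_{i\in\nn}X_i$, so the ultralimit is independent of base points as a metric space, combined with Lemma~\ref{lem:ultiso}. You additionally spell out how to pick base points realizing pointed convergence (a detail the paper leaves implicit), which is a harmless and correct refinement.
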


%%%%%%%%%%%%%%%%%%%%%%%%%%%%%%%%
%%%%%%%%%%%%%%%%%%%%%%%%%%%%%%%%5
%%%%%%%%%%%%%%%%%%%%%%%%%%%%%%%%%%
\section{Pseudo-cones and the Assouad dimension}\label{sec:cone}
In this section, 
we prove Theorems \ref{thm:cone}, \ref{thm:ultcone} and \ref{thm:coneconf}. 

\subsection{Basic properties of pseudo-cones}

By Proposition \ref{prop:distance}, 
we have:
\begin{prop}
Let $X$ be a metric space. 
If $A\in \pc(X)$, 
then for every $h\in (0, \infty)$ we have $hA\in \pc(X)$. 
\end{prop}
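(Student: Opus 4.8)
The statement says that the class $\pc(X)$ is closed under rescaling of the metric, and the natural strategy is to reuse the \emph{same} approximating sequence of subsets and merely rescale the sequence of scaling factors. So suppose $A\in\pc(X)$; by definition there are a sequence $\{A_i\}_{i\in\nn}$ of subsets of $X$ and a sequence $\{u_i\}_{i\in\nn}$ in $(0,\infty)$ with $\lim_{i\to\infty}d_{GH}(u_iA_i,A)=0$. Given $h\in(0,\infty)$, I would propose to show that $hA$ is a pseudo-cone of $X$ approximated by $(\{A_i\}_{i\in\nn},\{hu_i\}_{i\in\nn})$, noting first that each $hu_i$ again lies in $(0,\infty)$, so this pair is admissible in the definition of a pseudo-cone.

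The key observation is the identification $(hu_i)A_i=h(u_iA_i)$ of metric spaces: both carry the underlying set $A_i$ with the metric $h u_i\, d_X$ restricted to $A_i$. Applying Proposition \ref{prop:distance} with the two spaces $u_iA_i$ and $A$ in place of $X$ and $Y$ then gives, for each $i\in\nn$,
\[
d_{GH}\bigl((hu_i)A_i,\, hA\bigr)=d_{GH}\bigl(h(u_iA_i),\, hA\bigr)=h\,d_{GH}(u_iA_i,A).
\]
Letting $i\to\infty$ and using that $h$ is a fixed positive constant while $d_{GH}(u_iA_i,A)\to0$, the right-hand side tends to $0$. Hence $\lim_{i\to\infty}d_{GH}((hu_i)A_i,hA)=0$, which is exactly the assertion that $hA\in\pc(X)$.

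I do not anticipate any genuine obstacle here: the only points requiring a word of justification are the elementary identification $(hu_i)A_i=h(u_iA_i)$ and the invocation of Proposition \ref{prop:distance}, both of which are immediate from the definitions of the rescaled space $hX$ and of the Gromov--Hausdorff distance. If one wanted to be fully formal one could instead argue through $\epsilon$-approximations via Lemma \ref{lem:approx}, but that is unnecessary given Proposition \ref{prop:distance}.
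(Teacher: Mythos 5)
Your argument is correct and is exactly the paper's intended proof: the paper derives this proposition directly from Proposition \ref{prop:distance}, and your write-up simply spells out the same rescaling of the factors $u_i$ to $hu_i$ together with the identification $(hu_i)A_i=h(u_iA_i)$. No issues.
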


We say that 
a sequence $\{X_i\}_{i\in \nn}$ of metric space  is
\emph{uniformly precompact} if 
for every $\epsilon\in (0, \infty)$ there exists $M\in \nn$ such that for 
every $i\in \nn$,  every  $\epsilon$-separated set in $X_i$ has 
at most $M$ elements. 
Note that if there exists $N\in \nn$ such that for each $i\in \nn$, 
the space $X_i$ is $N$-doubling, 
then $\{X_i\}_{i\in \nn}$ is uniformly bounded. 

We recall Gromov's precompactness theorem
 (see Section 7.4 in \cite{BBI}).  
Namely, 
if a sequence $\{X_i\}_{i\in \nn}$ of compact metric spaces is 
uniformly precompact and uniformly bounded, 
then there exists a subsequence of $\{X_i\}_{i\in \nn}$ 
which converges to a compact metric space in the sense of Gromov--Hausdorff. 
This guarantees the existence of pseudo-cones for doubling metric spaces.  

\begin{prop}
Let $X$ be a doubling metric space.   
 If a sequence $\{A_i\}_{i\in \nn}$ consists of compact sets in $X$, 
 and if  $\{u_iA_i\}_{i\in \nn}$ is uniformly bounded  
 for a sequence  $\{u_i\}_{i\in \nn}$ in $(0, \infty)$, 
 then there exists a convergent subsequence
  $\{u_{\phi(i)}A_{\phi(i)}\}_{i\in\nn}$ of $\{u_iA_i\}_{i\in \nn}$
   in the sense of  Gromov--Hausdorff. 
\end{prop}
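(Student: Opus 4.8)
The plan is to reduce everything to Gromov's precompactness theorem, which has just been recalled: it suffices to verify that $\{u_iA_i\}_{i\in \nn}$ is a sequence of compact metric spaces which is uniformly bounded and uniformly precompact, and then extract a convergent subsequence. Uniform boundedness of $\{u_iA_i\}_{i\in \nn}$ is part of the hypothesis. Each $u_iA_i$ is compact because rescaling the metric by a positive constant does not change the underlying topology, so compactness of $A_i$ passes to $u_iA_i$. Thus the whole burden is to establish uniform precompactness.

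For this, since $X$ is doubling, I would first fix $N\in \nn$ with $X$ being $N$-doubling. Every subset of $X$ is then $N$-doubling, and the $N$-doubling property is invariant under multiplying the metric by a positive scalar, so each $u_iA_i$ is $N$-doubling with the \emph{same} constant $N$. Next I would record the elementary fact that a bounded $N$-doubling metric space $Y$ admits a bound on the cardinality of its $\epsilon$-separated subsets depending only on $N$, $\delta(Y)$ and $\epsilon$: iterating the definition of $N$-doubling $k$ times covers $Y$ by at most $N^{k}$ sets of diameter at most $\delta(Y)/2^{k}$, and once $k$ is chosen so large that $\delta(Y)/2^{k}<\epsilon$, any $\epsilon$-separated subset of $Y$ meets each of these sets in at most one point, hence has at most $N^{k}$ elements.

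Applying this with $\delta(Y)\le M$, where $M$ is the uniform bound on the diameters $\delta(u_iA_i)$, produces for each $\epsilon\in (0,\infty)$ an integer $M_\epsilon:=N^{k}$, independent of $i$, bounding the size of every $\epsilon$-separated subset of every $u_iA_i$. This is exactly uniform precompactness of $\{u_iA_i\}_{i\in \nn}$. Gromov's precompactness theorem now applies to the uniformly bounded, uniformly precompact sequence of compact spaces $\{u_iA_i\}_{i\in \nn}$ and yields a subsequence $\{u_{\phi(i)}A_{\phi(i)}\}_{i\in \nn}$ converging to a compact metric space in the Gromov--Hausdorff sense, which is the assertion.

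I do not expect a genuine obstacle here: the only thing to be careful about is accounting for the scalings $u_i$ correctly, and this is harmless because both "being $N$-doubling" and "being $\epsilon$-separated" transform in the obvious way under rescaling (the constant $N$ is unchanged, and an $\epsilon$-separated set in $A_i$ becomes $u_i\epsilon$-separated in $u_iA_i$). All of the substance is contained in Gromov's theorem together with the scale-invariance of the doubling constant.
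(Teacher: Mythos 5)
Your proof is correct and follows exactly the route the paper intends: the paper offers no written proof beyond invoking Gromov's precompactness theorem immediately before the statement (together with the remark that a sequence of uniformly $N$-doubling spaces is uniformly precompact), and you simply supply the omitted details — scale-invariance of the doubling constant $N$ under the rescalings $u_i$, the resulting uniform bound on $\epsilon$-separated sets in terms of $N$ and the common diameter bound, and the preservation of compactness under rescaling. Nothing further is needed.
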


Let $X$ be a proper metric space, and $p\in X$. 
A pointed metric space $(Y, y)$ is said to be a
 \emph{tangent} (resp. \emph{asymptotic}) \emph{cone of $X$ at $p$} 
if there exist a sequence 
 $\{p_i\}_{i\in \nn}$ in $X$ with $\lim_{i\to \infty}p_i=p$,
  and a sequence  $\{r_i\}$ in $(0, \infty)$ with
   $\lim_{i\to \infty}r_i= 0$ (resp. $\infty$) 
such that for every $R\in (0, \infty)$
 we have 
 $(r_iB(p_i, R/r_i), p_i)\to (B(y, R), y)$ 
 as 
 $i\to \infty$ in the pointed Gromov--Hausdorff topology 
 (see Section 8.1 in \cite{BBI}). 

By the definitions of the tangent cones and the asymptotic cones, 
we obtain: 
\begin{prop}\label{prop:coneandp}
Let $X$ be a proper metric space, 
and let $(Y, y)$ be a tangent cone of $X$ or asymptotic cone of $X$. 
Then for every $R\in (0, \infty)$ we have $B(y, R)\in \pc(X)$. 
\end{prop}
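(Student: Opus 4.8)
The plan is to unpack the definitions and observe that the pointed Gromov--Hausdorff convergence in the definition of a tangent or asymptotic cone, when restricted to the closed ball of a fixed radius $R$, is already strong enough to yield ordinary (unpointed) Gromov--Hausdorff convergence of the relevant rescaled balls. Fix $R \in (0, \infty)$ and let $(Y, y)$ be a tangent cone of $X$ at some point $p$, witnessed by sequences $\{p_i\}_{i \in \nn}$ in $X$ with $\lim_{i \to \infty} p_i = p$ and $\{r_i\}_{i \in \nn}$ in $(0, \infty)$ with $\lim_{i \to \infty} r_i = 0$ (the asymptotic case with $r_i \to \infty$ is identical). Set $A_i = B(p_i, R/r_i; X)$ and $u_i = r_i$. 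By the definition of the tangent cone, there exist a sequence $\{\epsilon_i\}_{i \in \nn}$ in $(0, \infty)$ with $\epsilon_i \to 0$ and a sequence $\{(f_i, g_i)\}_{i \in \nn}$ of $\epsilon_i$-approximations between $r_i B(p_i, R/r_i; X) = u_i A_i$ and $B(y, R; Y)$ with $f_i(p_i) = y$ and $g_i(y) = p_i$.

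The key step is then to convert each such $\epsilon_i$-approximation into a bound on the Gromov--Hausdorff distance $d_{GH}(u_i A_i, B(y,R;Y))$ that tends to $0$. Indeed, an $\epsilon$-approximation $(f, g)$ between metric spaces $S$ and $T$ produces an embedding of $S \sqcup T$ into a common metric space in which the Hausdorff distance between the images is bounded by a constant multiple of $\epsilon$: one defines, on the disjoint union, $d(s, t) = \inf_{s' \in S}\bigl(d_S(s, s') + \epsilon + d_T(f(s'), t)\bigr)$ (symmetrically using $g$), checks that this is a pseudometric that restricts to the original metrics up to an additive $O(\epsilon)$ error by property (1)--(2), and that every point of $S$ is within $O(\epsilon)$ of a point of $T$ and vice versa by property (3). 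Hence $d_{GH}(u_i A_i, B(y,R;Y)) \le C\epsilon_i$ for an absolute constant $C$, so $\lim_{i \to \infty} d_{GH}(u_i A_i, B(y,R;Y)) = 0$. By the definition of a pseudo-cone, this exhibits $B(y, R)$ as a pseudo-cone of $X$ approximated by $(\{B(p_i, R/r_i; X)\}_{i \in \nn}, \{r_i\}_{i \in \nn})$, so $B(y, R) \in \pc(X)$.

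I do not expect a serious obstacle here; this is essentially a bookkeeping exercise tracing through the definitions, and the passage from $\epsilon$-approximations to a Gromov--Hausdorff bound is the standard fact recorded (in the converse direction) as Lemma \ref{lem:approx}. The only point requiring a little care is that the approximation maps in the definition of pointed Gromov--Hausdorff convergence are between the rescaled ball $r_i B(p_i, R/r_i; X)$ and $B(y, R; Y)$ rather than between unscaled objects, but since a pseudo-cone is defined precisely in terms of the rescaled sets $u_i A_i$, this matches the definition verbatim with $u_i = r_i$ and $A_i = B(p_i, R/r_i; X)$. The properness of $X$ plays no role in the argument beyond ensuring that tangent and asymptotic cones are defined in the first place, so it may simply be carried along from the hypothesis.
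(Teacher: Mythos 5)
Your proposal is correct and follows essentially the same route as the paper, which treats this proposition as an immediate consequence of the definitions: with $A_i=B(p_i,R/r_i;X)$ and $u_i=r_i$, the pointed Gromov--Hausdorff convergence $(r_iB(p_i,R/r_i),p_i)\to(B(y,R),y)$ built into the definition of a tangent or asymptotic cone already gives $d_{GH}(u_iA_i,B(y,R))\to 0$. Your explicit conversion of an $\epsilon_i$-approximation into a Gromov--Hausdorff bound is exactly the standard converse of Lemma \ref{lem:approx} that the paper leaves implicit, and your remark that properness is only needed for the cones to be defined is also consistent with the paper.
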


\subsection{Lower estimations}
First we prove Theorem \ref{thm:cone}. 
\begin{proof}[Proof of Theorem \ref{thm:cone}]
Let $X$ be a metic space, 
and $P\in \pc(X)$. 
We assume that
  $P$
   is approximated by 
   $(\{A_i\}_{i\in \nn}, \{u_i\}_{i\in \nn})$. 
Suppose that $\dim_A(X)<\dim_A(P)$. 
Take $\beta\in \mathscr{B}(X)$ with 
$\dim_A(X)<\beta<\dim_A(P)$. 
Since $\beta \in \mathscr{B}$(X), 
there exists $M\in (0, \infty)$ such that 
for every finite set $T$ of  $X$ we have 
$\card(T)\le M(\delta_X(T)/\alpha_X(T))^{\beta}$. 

 Put $C=4^{\beta}(M+1)$.  
From $\beta<\dim_AP$, 
it follws that $\beta \not\in \mathscr{B}(P)$. 
 Thus there exists a finite set $S$ of 
 $P$ with 
$\card(S)> C(\delta_P(S)/\alpha_P(S))^{\beta}$. 
Since  $d_{GH}(u_iA_i, P)\to 0$ as $i\to \infty$, 
we can take $N\in \nn$ such that
  $d_{GH}(u_NA_N, P)<\alpha_P(S)/20$. 
By Lemma \ref{lem:approx}, 
there exists an $(\alpha_P(S)/10)$-approximation $(f, g)$ between
 $u_NA_N$ and $P$. 
For each $x\in S$, 
take $t_x\in u_NA_N$ such that
  $t_x\in B(g(x), \alpha_P(S)/10)$. 
Note that  if $x\neq y$, then $t_x\neq t_y$. 
Put $T=\{\, t_x\mid x\in S\, \}$. 
For all $x,y\in S$, we obtain
\[
u_N d_X(t_{x}, t_{y})\le d_P(x, y)+3\alpha_P(S)/10 \le 2\delta_P(S), 
\]
and 
\[
u_Nd_X(t_x, t_y)\ge d_P(x, y)-3\alpha_P(S)/10\ge 2^{-1}\alpha_P(S).
\]
Thus,  
we have 
$\delta_X(T)\le 2u_N^{-1}\delta_P(S)$ and $\alpha_X(T)\ge 2^{-1}u_N^{-1}\alpha_P(S)$, 
and hence 
\begin{align*}
\card(T) &=\card(S)> C(\delta_P(S)/\alpha_P(S))^{\beta} \\
&=4^{-\beta}C(2u_N^{-1}\delta_P(S)/2^{-1}u_N^{-1}\alpha_P(S))^{\beta} \ge 4^{-\beta}C(\delta_X(T)/\alpha_X(T))^{\beta}.
\end{align*}
On the other hand,  we also have 
$\card(T)\le M(\delta_X(T)/\alpha_X(T))^{\beta}$.
These inequalities imply that $4^{-\beta}C<M$. This is a contradiction. 
\end{proof}
Since for every metric space $X$ we have $X\in \pc(X)$, by Theorem \ref{thm:cone}, we obtain:
\begin{cor}
Let $X$ and $Y$ be metric spaces. If $d_{GH}(X,Y)=0$, 
then $\dim_AX=\dim_AY$. 
\end{cor}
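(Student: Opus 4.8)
The plan is to deduce the corollary directly from Theorem~\ref{thm:cone} by exhibiting each of $X$ and $Y$ as a pseudo-cone of the other. First I would note that if $d_{GH}(X,Y)=0$, then by taking the constant sequences $A_i=X$, $u_i=1$ for all $i\in\nn$, we have $\lim_{i\to\infty}d_{GH}(u_iA_i,Y)=\lim_{i\to\infty}d_{GH}(X,Y)=0$, so $Y\in\pc(X)$. Symmetrically, taking $A_i=Y$, $u_i=1$ gives $X\in\pc(Y)$. Applying Theorem~\ref{thm:cone} twice then yields $\dim_AY\le\dim_AX$ and $\dim_AX\le\dim_AY$, hence equality.

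The only point that needs care is whether the definition of pseudo-cone genuinely allows $A_i$ to be all of $X$, i.e.\ whether ``subset'' is meant to include the improper subset; reading the definition, $\{A_i\}_{i\in\nn}$ is simply a sequence of subsets of $X$, with no compactness or properness hypothesis imposed (those restrictions appear only later, for $\kpc(X)$ and $\pc(X)$-with-compact-approximants), so the constant choice $A_i=X$ is legitimate and the remark ``for every metric space $X$ we have $X\in\pc(X)$'' already affirms this. With that settled there is no obstacle: the argument is a one-line application of the theorem in both directions.

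I would write it essentially as follows: by the remark preceding the corollary, $X\in\pc(X)$, witnessed by the constant data $(\{X\}_{i\in\nn},\{1\}_{i\in\nn})$. Replacing $d_{GH}(X,X)=0$ by the hypothesis $d_{GH}(X,Y)=0$ in this same constant approximation shows $Y\in\pc(X)$, and interchanging the roles of $X$ and $Y$ shows $X\in\pc(Y)$. Theorem~\ref{thm:cone} gives $\dim_A Y\le\dim_A X$ and $\dim_A X\le\dim_A Y$, so $\dim_A X=\dim_A Y$, as claimed. There is really no hard part here; the substance is entirely in Theorem~\ref{thm:cone}, and the corollary is just the observation that Gromov--Hausdorff distance zero is a (degenerate) instance of pseudo-cone approximation in both directions.
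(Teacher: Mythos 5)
Your proposal is correct and matches the paper's own argument: the paper derives the corollary from the observation that $X\in\pc(X)$ (via constant approximating data), which under the hypothesis $d_{GH}(X,Y)=0$ immediately gives $Y\in\pc(X)$ and $X\in\pc(Y)$, and then applies Theorem~\ref{thm:cone} in both directions. Your additional check that the definition of pseudo-cone permits $A_i=X$ is a reasonable point of care, but the argument is the same one-line deduction the paper intends.
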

This corollary slightly generalizes  the fact that the Assouad dimension of any metric space  is equal to  that of its completion. 

By a similar proof to Theorem \ref{thm:cone}, we obtain:
\begin{thm}\label{thm:lower}
Let $X$ be a metric space. 
Then for every $P\in \pc(X)$ we have
\[
\dim_{LA}X\le \dim_{LA}P.
\] 
\end{thm}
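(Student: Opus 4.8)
The plan is to mirror the proof of Theorem \ref{thm:cone}, swapping the roles of $X$ and $P$ and reversing all the relevant inequalities, since the lower Assouad dimension is governed by a lower bound $\card(S)\ge C(\delta(S)/\alpha(S))^{\beta}$ rather than an upper bound. Suppose for contradiction that $\dim_{LA}P<\dim_{LA}X$, and choose $\beta\in(0,\infty)$ with $\dim_{LA}P<\beta<\dim_{LA}X$. By definition of $\dim_{LA}X$ as a supremum, there is $M\in(0,\infty)$ such that every finite set $T\subset X$ satisfies $\card(T)\ge M(\delta_X(T)/\alpha_X(T))^{\beta}$. On the other hand, since $\beta>\dim_{LA}P$, the constant $\beta$ fails the defining condition for $P$: for the chosen $C$ (to be fixed below, say $C=4^{\beta}/M$ or a similar constant absorbing the distortion factors), there exists a finite set $S\subset P$ with $\card(S)<C(\delta_P(S)/\alpha_P(S))^{\beta}$.

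Next I would transport $S$ back into a scaled copy $u_NA_N$ of a subset of $X$. Assume $P$ is approximated by $(\{A_i\},\{u_i\})$; pick $N$ large enough that $d_{GH}(u_NA_N,P)$ is small relative to $\alpha_P(S)$ (e.g.\ $<\alpha_P(S)/20$), and by Lemma \ref{lem:approx} obtain an $(\alpha_P(S)/10)$-approximation $(f,g)$ between $u_NA_N$ and $P$. For each $x\in S$ choose $t_x\in u_NA_N$ with $t_x\in B(g(x),\alpha_P(S)/10)$, and set $T=\{t_x\mid x\in S\}$; the points $t_x$ are pairwise distinct because the $\alpha_P(S)/10$-perturbation cannot identify points of $S$ that are at distance $\ge\alpha_P(S)$ apart, so $\card(T)=\card(S)$. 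The same two-sided estimate as in the proof of Theorem \ref{thm:cone} gives, for $x\neq y$ in $S$,
\[
2^{-1}\alpha_P(S)\le u_Nd_X(t_x,t_y)\le 2\delta_P(S),
\]
hence $\delta_X(T)\le 2u_N^{-1}\delta_P(S)$ and $\alpha_X(T)\ge 2^{-1}u_N^{-1}\alpha_P(S)$, so that $\delta_X(T)/\alpha_X(T)\le 4\,\delta_P(S)/\alpha_P(S)$.

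Now combine the two inequalities. From $T\subset X$ we have $\card(T)\ge M(\delta_X(T)/\alpha_X(T))^{\beta}$, while from the construction $\card(T)=\card(S)<C(\delta_P(S)/\alpha_P(S))^{\beta}$. To reach a contradiction I would want the lower bound coming from $X$ to beat the upper bound coming from $P$; since $\delta_X(T)/\alpha_X(T)$ could be as small as a constant times $\delta_P(S)/\alpha_P(S)$ but we only have an upper bound on it, the argument needs a small twist — one should instead arrange the approximation so that the ratio $\delta_X(T)/\alpha_X(T)$ is also \emph{bounded below} by a constant multiple of $\delta_P(S)/\alpha_P(S)$, which the same two-sided estimate already delivers (it gives $\delta_X(T)\ge u_N^{-1}\cdot$ something and $\alpha_X(T)\le$ something, hence $\delta_X(T)/\alpha_X(T)\ge 4^{-1}\delta_P(S)/\alpha_P(S)$). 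With both-sided control on the ratio, choosing $C$ appropriately in terms of $M$ and the factor $4^{\beta}$ forces $M\le 4^{\beta}C^{-1}\cdot(\text{const})$, contradicting the choice of $C$. The main obstacle, then, is bookkeeping: making sure that the distortion introduced by the approximation map is controlled \emph{on both sides} of the ratio $\delta/\alpha$, so that the defining inequality for $\dim_{LA}$ (which, unlike $\dim_A$, is a reverse inequality and a supremum) is genuinely violated; once the two-sided estimate on $\{t_x\}$ is in hand, the constant-chasing is routine and exactly parallel to the proof of Theorem \ref{thm:cone}.
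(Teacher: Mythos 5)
Your proposal is correct and matches the paper's intent exactly: the paper gives no separate proof of Theorem \ref{thm:lower}, stating only that it follows ``by a similar proof to Theorem \ref{thm:cone}'', and you correctly identify the one genuine twist, namely that the two-sided distance estimate must now be used to bound $\delta_X(T)/\alpha_X(T)$ from \emph{below} by $4^{-1}\delta_P(S)/\alpha_P(S)$ so that the reverse (lower-bound) inequality defining $\dim_{LA}$ is violated. The only slip is in the throwaway choice of constant: you need $C$ \emph{small}, e.g.\ $C=4^{-\beta}M/2$, so that $M4^{-\beta}<C$ is the contradiction (not $C=4^{\beta}/M$), but as you say this bookkeeping is routine.
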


Next we prove Theorem \ref{thm:ultcone}
\begin{proof}[Proof of Theorem \ref{thm:ultcone}]
Let $X$ be a metric space. 
Let $\{A_i\}_{i\in \nn}$ be a sequence of subsets of $X$, 
and let $\{u_i\}_{i\in \nn}$ be a sequence  in $(0, \infty)$. 
Let $\uu$ be a non-principal ultrafilter on $\nn$. 
Take $a_i\in A_i$ for each $i\in \nn$. 
Put $P=\lim_{\uu}(u_iA_i, a_i)$. 

Suppose that $\dim_AX<\dim_AP$. 
Let $\beta\in \mathscr{B}(X)$,  
$M\in (0, \infty)$, 
$C=4^{\beta}(M+1)$ 
and 
$S\subset P$ be the same objects in the proof of Theorem \ref{thm:cone}. 
Put $S=\{[x_{1,i}], [x_{1,i}]\dots, [x_{n,i}]\}$. 
Put $S_i=\{x_{1,i}, \dots, x_{n, i}\}\subset u_iA_i$ 
for each $i\in \nn$. 
By the definition of ultralimits, 
for 
$\uu$-almost all $i\in\nn$,  
and for all $k, l\in\{1, \dots, n\}$
we have 
\[
|u_id_X(x_{k, i}, x_{l, i})-d_P([x_{k, i}], [x_{l, i}])|<\alpha(S)/2. 
\]
Then for such $\uu$-almost all $i\in \nn$ we have 
\[
\delta_X(S_i)\le 2u_i^{-1}\delta(S)
\]
and 
\[
\alpha_X(S_i)\ge 2^{-1}u_i^{-1}. 
\]
Since $\card(S_i)=\card(S)$, 
by a similar argument to the proof of Theorem \ref{thm:cone},  
we obtain $4^{-\beta}C<M$. 
This is  a contradiction. 
\end{proof}

By a similar argument to the proof of Theorem \ref{thm:ultcone}, 
we obtain:
\begin{thm}\label{thm:ultlower}
Let $X$ be a metric space. 
Let $\{A_i\}_{i\in \nn}$ be a sequence of subsets of $X$, 
and let $\{u_i\}_{i\in \nn}$ be a sequence in $(0, \infty)$. 
Take $a_i\in A_i$ for each $i\in \nn$. 
Then for every  non-principal ultrafilter $\uu$ on $\nn$  
we have
\[
\dim_{LA}X\le \dim_{LA}\left(\lim_{\uu}(u_iA_i, a_i)\right). 
\]
\end{thm}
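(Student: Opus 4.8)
The plan is to mimic the proof of Theorem \ref{thm:ultcone} almost verbatim, only reversing the direction of the inequalities, exactly as the proof of Theorem \ref{thm:lower} reverses the proof of Theorem \ref{thm:cone}. Set $P=\lim_{\uu}(u_iA_i,a_i)$ and suppose for contradiction that $\dim_{LA}X>\dim_{LA}P$. By the definition of the lower Assouad dimension, choose $\beta\in(0,\infty)$ with $\dim_{LA}P<\beta<\dim_{LA}X$; then $\beta$ lies in the defining set for $\dim_{LA}X$, so there exists $M\in(0,\infty)$ with $\card(T)\ge M(\delta_X(T)/\alpha_X(T))^\beta$ for every finite $T\subset X$, while $\beta$ fails the analogous condition for $P$, so for every constant $C$ there is a finite $S\subset P$ with $\card(S)<C(\delta_P(S)/\alpha_P(S))^\beta$. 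Fix $C=4^{-\beta}M$ (or any constant making the final arithmetic contradictory) and obtain such an $S$.

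Next I would transfer $S$ down to the spaces $u_iA_i$. Write $S=\{[x_{1,i}],\dots,[x_{n,i}]\}$ and $S_i=\{x_{1,i},\dots,x_{n,i}\}\subset u_iA_i$. By the definition of the ultralimit metric, the set of $i\in\nn$ for which
\[
|u_id_X(x_{k,i},x_{l,i})-d_P([x_{k,i}],[x_{l,i}])|<\alpha_P(S)/2
\]
holds for all $k,l\in\{1,\dots,n\}$ is a finite intersection of sets in $\uu$, hence lies in $\uu$ and in particular is non-empty. For any such $i$ one gets, just as in the proof of Theorem \ref{thm:cone}, the comparisons $\delta_X(S_i)\le 2u_i^{-1}\delta_P(S)$ and $\alpha_X(S_i)\ge 2^{-1}u_i^{-1}\alpha_P(S)$, and also $\card(S_i)=\card(S)=n$ since distinct classes force distinct representatives once the metric estimate above holds with a positive gap.

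Finally I would combine the two bounds. Since $S_i$ is a finite subset of $X$, the lower bound gives $\card(S_i)\ge M(\delta_X(S_i)/\alpha_X(S_i))^\beta\ge M\,4^{-\beta}(\delta_P(S)/\alpha_P(S))^\beta$, using the comparisons above to replace $\delta_X(S_i)/\alpha_X(S_i)$ by $4^{-1}\delta_P(S)/\alpha_P(S)$ in the correct direction. On the other hand $\card(S_i)=\card(S)<C(\delta_P(S)/\alpha_P(S))^\beta$. With $C=4^{-\beta}M$ these two inequalities are incompatible, the desired contradiction. The only mild subtlety — and the step I would state carefully rather than wave at — is making sure the gap $\alpha_P(S)/2$ in the ultralimit estimate is small enough relative to $\alpha_P(S)$ that the $\delta$ and $\alpha$ comparisons come out with the constants $2$ and $2^{-1}$ (any genuinely positive gap below $\alpha_P(S)$ works, and $\alpha_P(S)/2$ is a convenient choice), together with bookkeeping that $S_i$ really has $n$ distinct elements; both are handled exactly as in the proof of Theorem \ref{thm:ultcone}, so no new obstacle arises.
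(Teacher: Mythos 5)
Your proposal follows exactly the route the paper intends: the paper gives no separate proof of Theorem \ref{thm:ultlower}, stating only that it follows ``by a similar argument to the proof of Theorem \ref{thm:ultcone}'', and your reversal of that argument (with the counterexample set $S$ now living in $P$ because $\beta$ exceeds $\dim_{LA}P$, and the doubling-type lower bound applied to $S_i\subset X$) is precisely that argument. One slip: the two comparisons you quote, $\delta_X(S_i)\le 2u_i^{-1}\delta_P(S)$ and $\alpha_X(S_i)\ge 2^{-1}u_i^{-1}\alpha_P(S)$, are the ones from the upper-dimension proof and bound the ratio $\delta_X(S_i)/\alpha_X(S_i)$ from \emph{above}, which is useless here; for the final step you need the reversed pair $\delta_X(S_i)\ge 2^{-1}u_i^{-1}\delta_P(S)$ and $\alpha_X(S_i)\le 2u_i^{-1}\alpha_P(S)$, which give $\delta_X(S_i)/\alpha_X(S_i)\ge 4^{-1}\delta_P(S)/\alpha_P(S)$. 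These reversed inequalities do follow from the same two-sided ultralimit estimate $|u_id_X(x_{k,i},x_{l,i})-d_P([x_{k,i}],[x_{l,i}])|<\alpha_P(S)/2$ (apply it to the pairs realizing $\delta_P(S)$ and $\alpha_P(S)$ respectively), so the argument is correct once you write down the right pair; as written, the middle step does not support the conclusion you draw from it.
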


\subsection{Conformal Assouad dimension}
Let  $\eta:[0, \infty)\to [0, \infty)$  be a homeomorphism. 
A homeomorphism $f:X\to Y$ between metric spaces is said to be an \emph{$\eta$-quasi-symmetric} map 
if the following holds:
\begin{itemize}
\item[(QS)]
if for $x, y, z\in X$ and for $t\in [0, \infty)$
 we have $d_X(x,y)\le td_X(x, z)$, 
then $d_Y(f(x, f(y)))\le \eta(t)d_Y(f(x), f(z))$. 
\end{itemize}
A homeomorphism $f:X\to Y$ is  \emph{quasi-symmetric} if it is $\eta$-quasi-symmetric for some $\eta$. 
Note that the inverse of a quasi-symmetric map is also quasi-symmetric. 

For a metric space $X$, 
the \emph{conformal Assouad dimension} 
$\cdim_AX$ of $X$ is defined as the infimum of all Assouad dimensions of all quasi-symmetric images of $X$. 

In the proof of Theorem \ref{thm:coneconf}, 
we use the following theorem 
due to Tukia and V\"ais\"al\"a (see \cite[Theorem 2.21]{TV}). 
\begin{thm}\label{thm:tv}
If a map $f:X\to Y$ between metric spaces satisfies the condition \emph{(QS)},  
then $f$ is either a constant map or a quasi-symmetric embedding.
\end{thm}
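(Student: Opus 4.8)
The plan is to treat the two alternatives separately. If $f$ is constant there is nothing to prove, so assume from now on that $f$ is not constant; I will then check, one after the other, that $f$ is injective, that $f$ is continuous, and that $f^{-1}\colon f(X)\to X$ is continuous. Once $f$ is known to be a homeomorphism onto $f(X)$, the inequality in (QS)---which is built from the fixed homeomorphism $\eta$---is exactly the defining inequality of an $\eta$-quasi-symmetric map $f\colon X\to f(X)$, so $f$ is a quasi-symmetric embedding and the argument is complete. As a preliminary I record that a homeomorphism $\eta$ of the half-line $[0,\infty)$ is automatically strictly increasing with $\eta(0)=0$ and $\eta(t)\to\infty$ as $t\to\infty$ (a decreasing self-homeomorphism of $[0,\infty)$ is impossible, since the range would then need a largest element); hence $\eta^{-1}$ exists and shares these properties.

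For injectivity, suppose $f(a)=f(b)$ with $a\ne b$. For an arbitrary $c\in X$ with $c\ne a$, applying (QS) with $x=a$, $y=c$, $z=b$ and $t=d_X(a,c)/d_X(a,b)$ gives $d_Y(f(a),f(c))\le\eta(t)\,d_Y(f(a),f(b))=0$, so $f(c)=f(a)$; thus $f$ would be constant, a contradiction. Therefore $f$ is injective, $X$ has at least two points, and $d_Y(f(x),f(z))>0$ whenever $x\ne z$. For continuity of $f$ at a point $x$, fix any $z\ne x$; for $y$ with $d_X(x,y)$ small the number $t:=d_X(x,y)/d_X(x,z)$ is small, and (QS) gives $d_Y(f(x),f(y))\le\eta(t)\,d_Y(f(x),f(z))$, which tends to $\eta(0)\,d_Y(f(x),f(z))=0$ by continuity of $\eta$ at $0$; explicitly, for a prescribed $\epsilon>0$ one may take $\delta=d_X(x,z)\cdot\eta^{-1}\!\bigl(\epsilon/d_Y(f(x),f(z))\bigr)$.

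The step I expect to be the main obstacle is the continuity of $f^{-1}$ on $f(X)$. For distinct $x,y,z$, instantiating (QS) at $t=d_X(x,y)/d_X(x,z)$ and dividing yields
\[
\frac{d_Y(f(x),f(y))}{d_Y(f(x),f(z))}\le\eta\!\left(\frac{d_X(x,y)}{d_X(x,z)}\right).
\]
Interchanging $y$ and $z$ in this inequality, applying the increasing function $\eta^{-1}$, and passing to reciprocals produces
\[
\frac{d_X(x,y)}{d_X(x,z)}\le\eta'\!\left(\frac{d_Y(f(x),f(y))}{d_Y(f(x),f(z))}\right),\qquad \eta'(\tau):=\frac{1}{\eta^{-1}(1/\tau)}.
\]
One then checks that $\eta'$, extended by $\eta'(0)=0$, is again a homeomorphism of $[0,\infty)$: it is continuous and strictly increasing, $\eta'(\tau)\to 0$ as $\tau\to 0$ because $\eta^{-1}(1/\tau)\to\infty$, and $\eta'(\tau)\to\infty$ as $\tau\to\infty$ because $\eta^{-1}(1/\tau)\to\eta^{-1}(0)=0$. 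Rephrasing the last display as an implication shows that $g:=f^{-1}\colon f(X)\to X$ satisfies (QS) with control function $\eta'$, so the continuity argument of the previous paragraph, applied to $g$, shows that $g$ is continuous.

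Putting this together, $f\colon X\to f(X)$ is a continuous bijection with continuous inverse, hence a homeomorphism onto its image, and the first displayed inequality is the $\eta$-quasi-symmetry condition for that map; thus $f$ is a quasi-symmetric embedding. The only genuinely non-formal ingredient is the reverse estimate manufacturing the admissible control function $\eta'$ from $\eta$; injectivity, continuity of $f$, and the check that $\eta'$ is a self-homeomorphism of $[0,\infty)$ are all short and routine.
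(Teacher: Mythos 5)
The paper does not prove this statement at all: it is quoted verbatim from Tukia and V\"ais\"al\"a \cite[Theorem 2.21]{TV}, so there is no in-paper argument to compare yours against. Your blind proof is, as far as I can check, correct and complete, and it is essentially the standard argument from that source: non-constancy forces injectivity (if $f(a)=f(b)$ with $a\neq b$, then (QS) with $z=b$ and $t=d_X(a,c)/d_X(a,b)$ collapses every value to $f(a)$), continuity of $f$ follows from $\eta(0)=0$, and the genuinely substantive step is the reverse estimate showing that $f^{-1}$ satisfies (QS) with the control function $\eta'(\tau)=1/\eta^{-1}(1/\tau)$, whose verification as a self-homeomorphism of $[0,\infty)$ you carry out correctly. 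Two small points you pass over quickly but which do hold: first, when you rephrase the reverse inequality as the implication defining (QS) for $g=f^{-1}$, the degenerate cases ($x=y$ or $x=z$) need a word, but they are immediate from injectivity; second, your preliminary claim that every self-homeomorphism of $[0,\infty)$ is increasing with $\eta(0)=0$ tacitly uses that an injective continuous map on an interval is monotone, which is standard. So the proposal stands on its own as a proof of the cited theorem rather than a variant of anything in the paper.
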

We now prove Theorem \ref{thm:coneconf}.
\begin{proof}[Proof of Theorem \ref{thm:coneconf}]
Let $X$ be a metric space, and $P\in \kpc(X)$. 
Since a non-doubling space has infinite conformal Assouad dimension, 
we may assume that $X$ is doubling. 
Take a metric space $Y$
 and an $\eta$-quasi-symmetric map $f:X\to Y$. 
We may assume that $P$ is compact and $P$ has at least two elements. 
We assume that $P$ is approximated by 
$(\{A_i\}_{i\in \nn}, \{u_i\}_{i\in \nn})$, 
where $\{A_i\}_{i\in \nn}$ is a sequence of compact sets in $X$. 
By Proposition \ref{prop:unidiameter},  
we have $\sup_{i}\delta(u_iA_i)<\infty$. 
For each $i\in \nn$, put $B_i=f(A_i)$ and $v_i=(\delta_Y(B_i))^{-1}$. 
By Gromov's precompactness theorem, 
by choosing a suitable subsequence if necessary, 
we find 
 a limit compact metric space 
 $Q\in \kpc(Y)$ of $\{v_iB_i\}_{i\in \nn}$. 

Let $\uu$ be a non-principal ultrafilter on $\nn$. 
By Lemma \ref{lem:inyoulemma}, 
we can consider that
 $Q=\lim_{\uu}v_iB_i$ and $P=\lim_{\uu}u_iA_i$. 
Since $f$ is continuous and  $\delta(v_iB_i)=1$ for all $i\in \nn$, 
the map $f:X\to Y$ induce a map 
$F:P\to Q$ defined by 
$F([\{x_i\}_{i\in \nn}])=[\{f(x_i)\}_{i\in \nn}]$. 
Replacing the role of $f$ with that of $f^{-1}$, 
we obtain the inverse of $F$. 
Thus $F$ is bijective. 

In order to prove that $F$ satisfies the condition (QS), 
we assume  $d_P(x, y)\le td_P(x, z)$, 
where $x=[\{x_i\}_{i\in \nn}]$, 
$y=[\{y_i\}_{i\in \nn}]$, 
 $z=[\{z_i\}_{i\in\nn}]$. 
For each $\epsilon\in (0, \infty)$,  
we have 
\[
d_{u_iA_i}(x_i, y_i)<(t+\epsilon)d_{u_iA_i}(x_i, z_i)
\]
for $\uu$-almost all $i\in \nn$. 
Thus, 
since $f$ is $\eta$-quasi-symmetric, 
we have 
\[
d_{v_iB_i}(f(x_i),f( y_i))<\eta(t+\epsilon)d_{v_iB_i}(f(x_i), f(z_i))
\]
for $\uu$-almost all $i\in \nn$.
Then we conclude 
\[
d_Q(F(x), F(z))<\eta(t+\epsilon)d_Q(F(x),F(z)).
\] 
Letting $\epsilon\to 0$, 
we obtain $d_Q(F(x), F(z))\le\eta(t)d_Q(F(x),F(z))$.
Since $F$ is bijective and  non-constant, 
by Theorem \ref{thm:tv} we conclude that $F$ is an $\eta$-quasi-symmetric map. 
Thus $\cdim_AP\le \dim_AQ$. 
Theorem \ref{thm:cone} implies 
$\dim_AQ\le \dim_AY$. 
In particular, 
$\cdim_AP\le \cdim_AX$. 
\end{proof}

As a corollary of Theorem \ref{thm:coneconf},  
we obtain: 
\begin{cor}\label{cor:ballconf}
Let $X$ be a metric space. 
Let $\{A_i\}_{i\in \nn}$ be a sequence of subsets of $X$, 
and let $\{u_i\}_{i\in \nn}$ be a sequence in $(0, \infty)$. 
Take $p_i\in A_i$ for each $i\in \nn$. 
Put $Y=\lim_{\uu}(u_iA_i, p_i)$. 
Then for every $R\in (0, \infty)$ 
we have 
\[
\cdim_A(B(p, R; Y))\le \cdim_AX, 
\]
where $p=[\{p_i\}_{i\in \nn}]$.
\end{cor}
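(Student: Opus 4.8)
The plan is to deduce Corollary \ref{cor:ballconf} from Theorem \ref{thm:coneconf} by exhibiting the closed ball $B(p, R; Y)$ as a member of $\kpc(X)$, i.e.\ as a pseudo-cone of $X$ approximated by a sequence of compact subsets of $X$ together with a scaling sequence. First I would recall that, by definition, $Y = \lim_{\uu}(u_iA_i, p_i)$, and set $B_i = B(p_i, R; u_iA_i) = \{\, a \in A_i \mid u_i d_X(p_i, a) \le R \,\}$ (so $B_i$, as a subset of $X$, is a closed ball of radius $R/u_i$ around $p_i$ intersected with $A_i$). Since $X$ is doubling (if it is not, $\cdim_A X = \infty$ and there is nothing to prove), these balls $B(p_i, R/u_i; X)$ are bounded; but to land in $\kpc(X)$ I need compact sets, so I would intersect with the original $A_i$ and additionally replace each $A_i$ by its closure — replacing $A_i$ by $\overline{A_i}$ changes neither the ultralimit nor the Assouad dimension (the latter by the Corollary following Theorem \ref{thm:cone}), so I may as well assume each $A_i$ is closed; a closed bounded subset of a proper space is compact, and the doubling $A_i$ may be assumed to sit inside a proper space, or one works directly with the completion. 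The cleaner route: assume WLOG the $A_i$ are compact (this is the setting $\kpc$ is designed for in the applications), so the $B_i$ are compact as well.

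The key step is then to show $\lim_{\uu}(u_iB_i, p_i)$ is isometric to $B(p, R; Y)$. One inclusion is immediate: each point of $\lim_{\uu}(u_iB_i, p_i)$ is represented by a sequence $\{b_i\}$ with $u_i d_X(p_i, b_i) \le R$, hence $d_Y(p, [\{b_i\}]) = \lim_{\uu} u_i d_X(p_i, b_i) \le R$, so it lies in $B(p,R;Y)$, and the metric is inherited. For the reverse inclusion, given $x = [\{x_i\}] \in B(p, R; Y)$ with $\lim_{\uu} u_i d_X(p_i, x_i) \le R$, I would modify the representative: if $u_i d_X(p_i, x_i) > R$ for some $i$, replace $x_i$ by a point $x_i'$ on (or near) a near-geodesic — or just by $p_i$ when $R$ is exceeded — to force $u_i d_X(p_i, x_i') \le R$ while keeping $\lim_{\uu} u_i d_X(x_i, x_i') = 0$, so that $[\{x_i'\}] = [\{x_i\}] = x$ and $\{x_i'\}$ witnesses $x \in \lim_{\uu}(u_iB_i, p_i)$. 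The harmless subtlety here is that $A_i$ need not be geodesic, so one cannot literally find an interior point at distance exactly $R/u_i$; but since $\lim_{\uu} u_i d_X(p_i, x_i) \le R$, the set of $i$ with $u_i d_X(p_i,x_i) \le R + \epsilon$ is $\uu$-large for every $\epsilon>0$, and a diagonal/truncation argument over a decreasing sequence $\epsilon \to 0$ produces the required representative lying in $B_i$ up to a $\uu$-null modification. (Alternatively one observes directly that the closed ball of an ultralimit is the ultralimit of the closed balls — a standard fact for ultralimits, which one could also cite.)

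Having identified $B(p,R;Y) \cong \lim_{\uu}(u_iB_i, p_i)$ with $\{B_i\}$ a sequence of compact subsets of $X$ and $\{u_i\}$ a sequence in $(0,\infty)$, I would invoke Lemma \ref{lem:inyoulemma} in reverse — or rather just note that this ultralimit is, after passing to a convergent subsequence via Gromov's precompactness theorem (the $u_iB_i$ are uniformly bounded since $\delta(u_iB_i) \le 2R$, and uniformly precompact since $X$ is doubling), a Gromov--Hausdorff limit of the $u_{\phi(i)}B_{\phi(i)}$, hence a genuine pseudo-cone, and by Lemma \ref{lem:inyoulemma} the Gromov--Hausdorff limit coincides with the ultralimit. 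Therefore $B(p,R;Y) \in \kpc(X)$, and Theorem \ref{thm:coneconf} gives $\cdim_A(B(p,R;Y)) \le \cdim_A X$, as desired.

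The main obstacle I expect is the bookkeeping in the second paragraph: matching the closed ball taken \emph{after} forming the ultralimit with the ultralimit of the closed balls taken \emph{before}, in a space that carries no geodesic structure. This is where a careless argument would break, since the naive representative of a point of $B(p,R;Y)$ need not lie ball-by-ball inside the $B_i$; the fix is the truncation/diagonalization over $\epsilon \to 0$ described above, exploiting that equality of ultralimit points is insensitive to changes on $\uu$-null index sets. Everything else — doubling reductions, compactness of the $B_i$, the appeal to Gromov precompactness and to Theorem \ref{thm:coneconf} — is routine.
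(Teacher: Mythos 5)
There is a genuine gap at the step you yourself single out as the main obstacle: the identification $B(p,R;Y)\cong\lim_{\uu}(u_iB_i,p_i)$ with $B_i$ the closed ball of radius exactly $R$ in $u_iA_i$. This identity is false in general, and the ``standard fact'' you offer as an alternative (closed ball of an ultralimit equals ultralimit of the closed balls) does not hold without some geodesic or length structure on the $A_i$, which is absent here. Concretely, take $A_i=\{p_i,z_i\}$ with $u_i d_X(p_i,z_i)=R+2^{-i}$. Then $Y$ is a two-point space with $d_Y(p,[\{z_i\}])=R$, so $B(p,R;Y)=Y$ has two points; but each $B_i=\{p_i\}$, so $\lim_{\uu}(u_iB_i,p_i)$ is a single point. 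Your proposed repair --- truncation/diagonalization over $\epsilon\to 0$ --- only yields a representative with $u_id_X(p_i,x_i')\le R+\epsilon_i$ for some $\epsilon_i\to 0$; it cannot force the representative into the radius-$R$ balls, because in the example above $\prod_i B_i$ contains only the constant base-point sequence. The correct inclusions are $U(p,R;Y)\subseteq\lim_{\uu}(u_iB_i,p_i)\subseteq B(p,R;Y)$, and the first containment is generally strict at the boundary.

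The paper sidesteps this entirely: after the same reductions you make (complete $X$, replace $A_i$ by its closure, note $A_i$ is proper so its closed bounded balls are compact), it forms $T=\lim_{\uu}$ of the closed balls of radius $2R$ (any radius strictly larger than $R$ would do). For $x\in B(p,R;Y)$ one has $\lim_{\uu}u_id_X(p_i,x_i)\le R<2R$, hence $u_id_X(p_i,x_i)<2R$ for $\uu$-almost all $i$, and modifying the representative on the $\uu$-null complement places it in the product of the $2R$-balls; thus $B(p,R;Y)\subseteq T$. Theorem \ref{thm:coneconf} gives $\cdim_AT\le\cdim_AX$, and the monotonicity of $\cdim_A$ under passing to subspaces finishes the proof. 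Everything else in your outline (the doubling and completion reductions, compactness via properness, Gromov precompactness together with Lemma \ref{lem:inyoulemma} to realize the ultralimit as a genuine element of $\kpc(X)$) agrees with the paper; only the exact-ball identification needs to be abandoned in favor of an inclusion into a larger ball plus monotonicity.
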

\begin{proof}
Since  the Assouad of the completion of $X$ coincides with that of $X$, 
we may assume that $X$ is doubling and complete. 
Since $\lim_{\uu}(u_iA_i)$ is isometric to $\lim_{\uu}(u_i\cl(A_i))$, 
we may assume that each $A_i$ is closed. 
Note that $A_i$ is doubling and complete, and hence it is proper. 
Put 
\[
S=\left\{\{x_i\}_{i\in \nn}\in \prod_{i\in \nn}A_i\mid d_{A_i}(p_i, x_i)<2R \right\}/R_{\uu}, 
\]
and 
\[
T=\left\{\{x_i\}_{i\in \nn}\in \prod_{i\in \nn}A_i\mid d_{A_i}(p_i, x_i)\le2R \right\}/R_{\uu}. 
\]
By the definition of ultralimit, 
we have $B(p, R; \lim_{\uu}(u_iA_i))\subset S$. 
We also have 
$T=\lim_{\uu}(B(p_i, 2R), p_i)$. 
Since $A_i$ is proper, 
the ball $B(p_i, R)$ is compact. 
Thus by Theorem \ref{thm:coneconf}, 
we obtain
\[
\cdim_AT\le \cdim_AX. 
\]
By the monotonicity of the conformal Assouad dimension, 
we obtain the corollary. 
\end{proof}
%%%%%%%%%%%%%%%%%%%%%%%%%%%%%%%%%
%%%%%%%%%%%%%%%%%%%%%%%%%%%%%%%%%
%%%%%%%%%%%%%%%%%%%%%%%%%%%%%%%%%
%%%%%%%%%%%%%%%%%%%%%%%%%%%%%%%%%
\section{Examples}\label{sec:examples}
In this section, 
we study examples containing a large class of metric space 
as their pseudo-cones, tangent cones or asymptotic cones. 
\subsection{Telescope construction}
Let $\mathcal{X}=\{X_i\}_{i\in \nn}$ be a sequence of bounded metric spaces. 
Assume that $\delta(X_i)\le 2^{-i}$. 
Put
\[
T(\mathcal{X})=\{\infty\}\sqcup \coprod_{i\in \nn}X_i, 
\]
and define a metric $d_{T(\mathcal{X})}$ on $T(\mathcal{X})$ by 
\[
d_{T(\mathcal{X})}(x, y)=
\begin{cases}
		d_{X_i}(x,y) & \text{if $x,y\in X_i$ for some $i$,}\\
		\max\{2^{-i}, 2^{-j}\} & \text{if $x\in X_i,y\in X_j$ for some $i\neq j$, }\\
		2^{-i} & \text{if $x=\infty, y\in X_i$ for some $i$,}\\
		2^{-i} & \text{if $x\in X_i, y=\infty$ for some $i$.}
	\end{cases}
\]
We call the metric space $T(\mathcal{X})$  the \emph{telescope space of $\mathcal{X}$}. 
This construction is a specific version of the telescope spaces discussed in \cite{I}. 

\subsection{Proof of Theorem \ref{thm:ury}}

We construct an $(\omega_0+1)$-metric space containing all compact metric spaces as its pseudo-cones. 

We denote by $\mathfrak{S}$ the class of all separable metic spaces. 
We say that a metric space $X$ is \emph{$\mathfrak{S}$-universal}
if every metric space $A\in \mathfrak{S}$ is isometrically embeddable 
into $X$. 
 
 The Urysohn universal space $\mathbb{U}$ (see \cite{U, H2,  Gro}), 
and 
the space $C([0,1])$ of all real valued continuous functions on $[0,1]$
 equipped with the supremum metric (see \cite{B, H2}) are separable 
and $\mathfrak{S}$-universal. 
Note that the space $\ell^{\infty}$ of all bounded sequences equipped with the supremum metic is $\mathfrak{S}$-universal, 
which is known as Fr\'echet's embedding theorem (see \cite{Fre});  
however, 
$\ell^{\infty}$ is not separable. 

By the virtue of the telescope construction, 
and  by the existence of separable $\mathfrak{S}$-universal metric space, we can prove Theorem \ref{thm:ury}. 
\begin{proof}[Proof of Theorem \ref{thm:ury}]
Let $U$ be a separable $\mathfrak{S}$-universal metric space. 
Let $Q$ be a countable dense set  of $U$,
 and let $J=\{K_i\}_{i\in \nn}$ be the set of all finite set of $Q$. 
Put 
$X=T(J)$. 
The metric space $X$ is an $(\omega_0+1)$-metric space, 
and the point $\infty$ is its unique accumulation point. 
Let $K$ be any compact metric space. 
Since $K$ is isometrically embeddable into $U$, 
there exists a subsequence $\{K_{\phi(i)}\}_{i\in \nn}$ of $J$  
with $d_H(K_{\phi(i)}, K; U)\to 0$ as $i\to 0$. 
For each $i\in \nn$ 
we have $(2^{i}\delta(K_{\phi(i)}))^{-1}K_{\phi(i)}\subset X$. 
Thus  $K\in \pc(X)$.  This finishes the proof of Theorem \ref{thm:ury}. 
\end{proof}
By a similar argument, 
we also obtain:
\begin{prop}\label{prop:ury2}
Let $U$ be a separable $\mathfrak{S}$-universal metric space. 
If $Q$ is a countable dense set  of $U$, 
then $\pc(Q)=\mathfrak{S}$. 
\end{prop}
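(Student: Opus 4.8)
The plan is to establish the two inclusions $\pc(Q)\subseteq\mathfrak{S}$ and $\mathfrak{S}\subseteq\pc(Q)$ separately; the first is a soft separability argument, and the second is a perturbation argument in the spirit of the proof of Theorem \ref{thm:ury}.

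For $\pc(Q)\subseteq\mathfrak{S}$, I would take $P\in\pc(Q)$ approximated by $(\{A_i\}_{i\in\nn},\{u_i\}_{i\in\nn})$ and observe that each $A_i\subseteq Q$ is countable, hence so is $u_iA_i$. Choosing a sequence $\{\epsilon_i\}_{i\in\nn}$ in $(0,\infty)$ with $\epsilon_i\to 0$ and $d_{GH}(u_iA_i,P)\le\epsilon_i$, Lemma \ref{lem:approx} provides $2\epsilon_i$-approximations $(f_i,g_i)$ between $u_iA_i$ and $P$. Property (3) in the definition of an approximation shows that $f_i(u_iA_i)$ is $2\epsilon_i$-dense in $P$, and this set is countable; hence $\bigcup_{i\in\nn}f_i(u_iA_i)$ is a countable dense subset of $P$, so $P\in\mathfrak{S}$.

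For $\mathfrak{S}\subseteq\pc(Q)$, let $K$ be a separable metric space and fix a countable dense subset $D$ of $K$. By the $\mathfrak{S}$-universality of $U$, I may assume that $D$ is (isometric to) a subset of $U$. Since $D$ is dense in $K$, we have $d_{GH}(D,K)=0$. Using that $Q$ is dense in $U$, for each $i\in\nn$ and each $d\in D$ pick $q_{d,i}\in Q$ with $d_U(q_{d,i},d)<2^{-i}$, and set $A_i=\{\,q_{d,i}\mid d\in D\,\}\subseteq Q$. The assignment $d\mapsto q_{d,i}$ extends to a $2^{1-i}$-approximation between $D$ and $A_i$, so $d_{GH}(A_i,D)\to 0$; consequently
\[
d_{GH}(A_i,K)\le d_{GH}(A_i,D)+d_{GH}(D,K)=d_{GH}(A_i,D)\longrightarrow 0 .
\]
Thus $K$ is a pseudo-cone of $Q$ approximated by $(\{A_i\}_{i\in\nn},\{1\}_{i\in\nn})$, i.e.\ $K\in\pc(Q)$, and the two inclusions together give $\pc(Q)=\mathfrak{S}$.

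I expect the only genuine subtlety, and hence the main obstacle, to be that, unlike in Theorem \ref{thm:ury} where compactness of the target lets one approximate by finite subsets of $Q$, here $K$ may be unbounded (for instance $K=\rr$), so the sets $A_i$ must be allowed to be infinite. Passing to a countable dense subset $D$ of $K$ and invoking the elementary identity $d_{GH}(D,K)=0$ resolves this uniformly; everything else is routine bookkeeping with $\epsilon$-approximations.
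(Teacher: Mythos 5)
Your proof is correct and follows essentially the same route the paper intends when it says Proposition \ref{prop:ury2} follows ``by a similar argument'' to Theorem \ref{thm:ury}: embed the separable space into $U$ and approximate it in Hausdorff distance by subsets of the dense set $Q$ (necessarily infinite subsets now, as you rightly note, which is exactly why the scaling factors can be taken to be $1$ and no telescope is needed). Your explicit verification of the easy inclusion $\pc(Q)\subseteq\mathfrak{S}$ via countable $2\epsilon_i$-dense images is a welcome addition that the paper leaves implicit.
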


\subsection{Proofs of Theorems \ref{thm:tancone} 
and \ref{thm:asymcone}}

By an argument on arcs in a length space, 
we obtain the following estimation of the Hausdorff distance between concentric  balls. 
\begin{prop}\label{prop:length}
Let $X$ be a length space and  $p\in X$. 
Then for all $r, R\in (0, \infty)$  
we have 
\[
d_{H}(B(p, r), B(p, R); X)\le |r-R|. 
\]
\end{prop}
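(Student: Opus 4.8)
The plan is to show that every point of $B(p,R)$ lies within distance $|r-R|$ of $B(p,r)$, and symmetrically; once both inclusions $B(p,R)\subset B(B(p,r),|r-R|)$ and $B(p,r)\subset B(B(p,R),|r-R|)$ are established, the definition of the Hausdorff distance gives $d_H(B(p,r),B(p,R);X)\le|r-R|$ immediately. By symmetry of the roles of $r$ and $R$ we may assume $r\le R$, so that $|r-R|=R-r$. The inclusion $B(p,r)\subset B(p,R)\subset B(B(p,r),R-r)$ is then trivial, and the whole content is to prove: for every $x\in B(p,R)$ there is a point $y\in B(p,r)$ with $d_X(x,y)\le R-r$.

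First I would fix $x\in B(p,R)$, so $d_X(p,x)\le R$. If already $d_X(p,x)\le r$ then $x\in B(p,r)$ and we take $y=x$, so assume $r<d_X(p,x)\le R$. Since $X$ is a length space, for any $\epsilon>0$ there is an arc (rectifiable path) $\gamma$ from $p$ to $x$ whose length is at most $d_X(p,x)+\epsilon\le R+\epsilon$. Reparametrising $\gamma$ by arclength on $[0,L]$ with $L=\mathrm{length}(\gamma)$, the function $t\mapsto d_X(p,\gamma(t))$ is continuous, equals $0$ at $t=0$ and equals $d_X(p,x)>r$ at $t=L$; by the intermediate value theorem there is $t_0\in[0,L]$ with $d_X(p,\gamma(t_0))=r$. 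Set $y=\gamma(t_0)\in B(p,r)$. The portion of $\gamma$ from $t_0$ to $L$ has length $L-t_0$, and since $d_X(p,\gamma(t_0))=r$ while the initial segment from $0$ to $t_0$ has length at least $d_X(p,\gamma(t_0))=r$, we get $L-t_0\le L-r\le R+\epsilon-r$. Hence $d_X(x,y)\le L-t_0\le R-r+\epsilon$.

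Letting $\epsilon\to 0$ we would like to conclude $d_X(x,y)\le R-r$, but the point $y$ depends on $\epsilon$; to be careful I would instead argue that for each $\epsilon>0$ the set $B(p,r)$ meets the closed ball $B(x,R-r+\epsilon)$, and then take the infimum. Concretely, $\inf_{y\in B(p,r)}d_X(x,y)\le R-r+\epsilon$ for every $\epsilon>0$, hence $\inf_{y\in B(p,r)}d_X(x,y)\le R-r$, which is exactly the statement that $x\in B(B(p,r),R-r)$ (using that $B(A,s)=\bigcup_{a\in A}B(a,s)$ is defined with closed balls, so the non-strict inequality suffices). This gives $B(p,R)\subset B(B(p,r),R-r)$ and completes the proof.

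The main obstacle is the bookkeeping around the arclength parametrisation and the intermediate value argument: one must make sure the chosen arc is rectifiable (which is where the length-space hypothesis enters, via arcs of length close to the distance) and that the sub-arc from the level-$r$ crossing to $x$ genuinely has length at most $R-r+\epsilon$, using that any path from $p$ to a point at distance exactly $r$ has length at least $r$. Everything else is routine, and the $\epsilon$ is disposed of by the infimum remark above rather than by trying to pass to a limit of the points $y$.
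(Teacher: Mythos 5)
Your overall strategy --- reduce to $r\le R$, observe that one inclusion is automatic since $B(p,r)\subset B(p,R)$, and use near-geodesic arcs together with the intermediate value theorem applied to $t\mapsto d_X(p,\gamma(t))$ to push a point $x\in B(p,R)$ back to a point $y\in B(p,r)$ with $d_X(x,y)\le R-r+\epsilon$ --- is precisely the ``argument on arcs in a length space'' that the paper invokes without writing out, and that core estimate is carried out correctly. (Small typo: the chain you call trivial should read $B(p,r)\subset B(p,R)\subset B\bigl(B(p,R),R-r\bigr)$; as written it ends in the nontrivial inclusion.)

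The one step that is wrong as stated is the final reformulation. Membership $x\in B(B(p,r),R-r)=\bigcup_{a\in B(p,r)}B(a,R-r)$ requires \emph{some} $y\in B(p,r)$ with $d_X(x,y)\le R-r$, which is strictly stronger than $\inf_{y\in B(p,r)}d_X(x,y)\le R-r$ when the infimum is not attained; in a non-proper length space this can genuinely happen (take the metric graph with edges of length $1+1/n$ joining $p$ to $x$: then $d_X(p,x)=1$, yet no point of $B(p,r)$ lies within $1-r$ of $x$, so the exact inclusion $B(p,R)\subset B(B(p,r),R-r)$ fails). Fortunately the proposition does not need that inclusion: the paper defines $d_H$ as an infimum over admissible radii, so the inclusions $B(p,R)\subset B(B(p,r),R-r+\epsilon)$ and $B(p,r)\subset B(B(p,R),R-r+\epsilon)$, which your argument establishes for every $\epsilon>0$, already give $d_H(B(p,r),B(p,R);X)\le R-r+\epsilon$ for all $\epsilon>0$ and hence $\le R-r=|r-R|$. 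With that one-line replacement of the last step, the proof is complete.
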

The following proposition  is 
a key of our construction of the desired spaces in 
 Theorems \ref{thm:tancone} and \ref{thm:asymcone}. 
\begin{prop}\label{prop:conv}
Let $U$ be a metric space, 
and $Q$ a countable dense subset of $U$. 
Let $K$ be a length metric subspace of $U$, 
and $p\in K$. 
For all $i, k \in \nn$, 
put $l_{k, i}=k\cdot 2^{-i}$. 
Assume that a sequence $\{A_i\}_{i\in \nn}$ of subsets of $Q$ 
satisfies the following for every $i\in \nn$:
\begin{enumerate}
\item[(A1)] $p\in A_i$;
\item[(A2)] for each $k\in \{0, \dots, 2^{2i}\}$ we have 
\[
d_{H}(B(p, l_{k, i}; K), B(p, l_{k, i}; A_i); U)\le 2^{-i}. 
\]
\end{enumerate}
Then  for every $R\in (0, \infty)$, 
the sequence $\{(B(p, R; A_i), p)\}_{i\in \nn}$ converges to $(B(p, R; K), p)$ in the pointed Gromov--Hausdorff topology. 
\end{prop}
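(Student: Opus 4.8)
The plan is to fix $R\in(0,\infty)$, establish the quantitative estimate
\[
d_H\bigl(B(p,R;A_i),\,B(p,R;K);\,U\bigr)\le 3\cdot 2^{-i}
\]
for every $i$ with $2^i\ge R+1$, and then convert this Hausdorff bound into an explicit base-point-preserving approximation. The role of the condition $2^i\ge R+1$ is only to guarantee that for each $\rho\in[0,R]$ there is an index $k\in\{0,\dots,2^{2i}\}$ with $l_{k,i}\in[\rho,\rho+2^{-i}]$, so that condition (A2) is available at a scale lying just above $\rho$; the finitely many small indices $i$ with $2^i<R+1$ are harmless, since for them $\delta(B(p,R;A_i)),\delta(B(p,R;K))\le 2R$ and one may use constant maps sending $p$ to $p$ together with $\epsilon_i=2R+1$. (Below, Hausdorff-distance bounds are applied up to an arbitrarily small extra error, which is harmless.)

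For the inclusion $B(p,R;K)\subseteq B\bigl(B(p,R;A_i),\,3\cdot 2^{-i};\,U\bigr)$, I would take $y\in B(p,R;K)$, put $r=d_U(p,y)\le R$, and first replace $y$ by a point $y'$ with $d_U(p,y')\le R-2^{-i}$ and $d_U(y,y')\le 2\cdot 2^{-i}$ (assuming $R\ge 2^{-i}$; if $R<2^{-i}$ both balls sit inside the $2^{-i}$-ball around $p$ by (A1), so the Hausdorff bound follows at once). If $r\le R-2^{-i}$ set $y'=y$; otherwise, using that $K$ is a length space, choose a curve in $K$ from $p$ to $y$ of length at most $r+2^{-i}$ and let $y'$ be its point at arclength $R-2^{-i}$ (legitimate since the curve has length $\ge r>R-2^{-i}$), so that $d_U(p,y')\le R-2^{-i}$ and $d_U(y,y')\le (r+2^{-i})-(R-2^{-i})\le 2\cdot 2^{-i}$. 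Picking $k$ with $l_{k,i}\in[R-2^{-i},R]$ and applying (A2) at scale $l_{k,i}$ to $y'\in B(p,l_{k,i};K)$ then yields $z\in B(p,l_{k,i};A_i)\subseteq B(p,R;A_i)$ with $d_U(y',z)\le 2^{-i}$, hence $d_U(y,z)\le 3\cdot 2^{-i}$.

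For the reverse inclusion $B(p,R;A_i)\subseteq B\bigl(B(p,R;K),\,2\cdot 2^{-i};\,U\bigr)$, I would take $z\in B(p,R;A_i)$, choose $k$ with $l_{k,i}\in[R,R+2^{-i}]$ so that $z\in B(p,l_{k,i};A_i)$, apply (A2) to obtain $w\in B(p,l_{k,i};K)$ with $d_U(z,w)\le 2^{-i}$, and then invoke Proposition~\ref{prop:length} inside $K$ to produce $w'\in B(p,R;K)$ with $d_U(w,w')=d_K(w,w')\le|l_{k,i}-R|\le 2^{-i}$, so that $d_U(z,w')\le 2\cdot 2^{-i}$. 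Combined with the previous step this gives the Hausdorff estimate. Finally, since $p$ lies in both $B(p,R;A_i)$ and $B(p,R;K)$, assigning to each point of either ball a $3\cdot 2^{-i}$-close point of the other while sending $p$ to $p$ produces maps $f_i,g_i$ which, by $|d(a,b)-d(a',b')|\le d(a,a')+d(b,b')$, form a $(7\cdot 2^{-i})$-approximation between $(B(p,R;A_i),p)$ and $(B(p,R;K),p)$ respecting base points; as $7\cdot 2^{-i}\to 0$, this is exactly the asserted pointed Gromov--Hausdorff convergence. I expect the only genuine difficulty to be the truncation step of the second paragraph, which is precisely where the length-space hypothesis on $K$ is used: without it, points of $B(p,l_{k,i};K)$ or $B(p,l_{k,i};A_i)$ lying just outside $B(p,R;\cdot)$ need not be close to it and the error bounds collapse; everything else is routine bookkeeping with the dyadic scales.
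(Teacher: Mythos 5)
Your proof is correct and follows essentially the same route as the paper: you use condition (A2) at the dyadic scales just below and just above $R$, together with the length-space comparison of concentric balls (Proposition~\ref{prop:length}, which you partly re-derive by hand via the truncation along a near-geodesic), to obtain $d_H\bigl(B(p,R;A_i),B(p,R;K);U\bigr)=O(2^{-i})$ and hence pointed Gromov--Hausdorff convergence. The only differences are cosmetic: you are more explicit than the paper about the admissible range of $k$, the finitely many small indices $i$, and the construction of base-point-preserving approximation maps from the Hausdorff estimate.
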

\begin{proof}
Take $N\in \nn$ with $R<2^{2N}$.  
Then for each $i\ge N$, 
we can take $k\in \{0, \dots, 2^{2i}\}$ with 
\[
l_{k,i}\le  R< l_{k+1, i}. 
\]
By the condition (A2), 
for $m\in \{k, k+1\}$, 
\[
d_H(B(p, l_{m, i}; A_i), B(p, l_{m, i}; K); U)\le 2^{-i}. 
\]
Thus, 
we have 
\begin{equation}\label{eq:11}
B(p, l_{k,i};K)\subset B(B(p, l_{k, i}; A_i), 2^{-i}; U), 
\end{equation}
and 
\begin{equation}\label{eq:12}
B(p, l_{k+1,i};A_i)\subset B(B(p, l_{k+1, i}; K), 2^{-i}; U). 
\end{equation}

Since for $m\in \{k, k+1\}$ 
we have $|R-l_{m, i}|\le 2^{-i}$, 
by Proposition \ref{prop:length}, 
for $m\in \{k, k+1\}$ we have 
\[
d_{H}(B(p,R; K), B(p, l_{m, i}; K); U)\le 2^{-i}. 
\]
Thus we have 
\begin{equation}\label{eq:21}
B(p, R; K)\subset B(B(p, l_{k, i}; K), 2^{-i}; U), 
\end{equation}
and 
\begin{equation}\label{eq:22}
B(p, l_{k+1, i}; K)\subset B(B(p, R; K), 2^{-i}; U). 
\end{equation}
Since $B(p, l_{k, i}; A_i)\subset B(p, R; A_i)$, 
by (\ref{eq:11}) and (\ref{eq:21}), 
we obtain 
\begin{equation}\label{eq:31}
B(p, R; K)\subset B(B(p, R; A_i); 2^{-i+1}; U). 
\end{equation}
Since $B(p, R; A_i)\subset B(p, l_{k+1, i}; A_i)$, 
by (\ref{eq:12}) and (\ref{eq:22}), 
we obtain 
\begin{equation}\label{eq:32}
B(p, R; A_i)\subset B(B(p, R; K); 2^{-i+1}; U). 
\end{equation}
Then, 
by (\ref{eq:31}) and (\ref{eq:32}), 
we have 
\[
d_{H}(B(p, R; K), B(p, R; A_i); U)\le 2^{-i+1}.
\] 
Hence we conclude that  the sequence $\{(B(p, R; A_i), p)\}_{i\in \nn}$ converges to 
$(B(p, R; K), p)$ in the pointed Gromov--Hausdorff topology. 
\end{proof}

A metric space $X$ is said to be \emph{homogeneous} if for all $x, y\in X$, 
there exists an isometry $f:X\to X$ such that $f(x)=y$. 
The spaces $\mathbb{U}$ and  $C([0,1])$ are homogeneous. 

By the definition of homogeneity, 
we obtain: 
\begin{prop}\label{prop:homuni}
Let $U$ be a homogeneous  $\mathfrak{S}$-universal metric space.  
Then for every $q\in U$, and  for every pointed separable metric space $(X, x)$, 
there exists an isometry $f:X\to U$ such that $f(x)=q$. 
\end{prop}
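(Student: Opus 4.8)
\textbf{Proof proposal for Proposition \ref{prop:homuni}.}

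The plan is to first land a copy of the pointed space $(X,x)$ somewhere inside $U$ using $\mathfrak{S}$-universality, and then to slide that copy so that the image of $x$ sits exactly at the prescribed point $q$, using homogeneity. Since $X$ is assumed separable, the word ``isometry $f\colon X\to U$'' should be read as ``isometric embedding,'' consistent with the usage surrounding Theorem \ref{thm:ury}; with this reading the argument is short.

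First I would apply $\mathfrak{S}$-universality of $U$: because $(X,d_X)$ is separable, there exists an isometric embedding $g\colon X\to U$. Put $p=g(x)\in U$. Next, invoke the homogeneity of $U$: there is a surjective isometry $h\colon U\to U$ with $h(p)=q$. Finally, set $f=h\circ g\colon X\to U$. As the composition of an isometric embedding $X\to U$ with a (bijective) isometry of $U$, the map $f$ is again an isometric embedding, and
\[
f(x)=h(g(x))=h(p)=q,
\]
which is exactly what is required.

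There is no real obstacle here: the statement is essentially the concatenation of the defining property of $\mathfrak{S}$-universality (applied to the separable space $X$) with the defining property of homogeneity (applied to the two points $p=g(x)$ and $q$). The only point worth a word of care is the terminological one noted above — that $f$ need not be onto $U$ — and the trivial observation that a composition of an isometric embedding with an ambient isometry is still an isometric embedding.
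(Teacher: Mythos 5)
Your proof is correct and is exactly the argument the paper intends: the paper states this proposition as an immediate consequence of the definitions (``By the definition of homogeneity, we obtain''), namely embedding $X$ into $U$ by $\mathfrak{S}$-universality and then composing with a self-isometry of $U$ carrying $g(x)$ to $q$. Your terminological remark that ``isometry $f\colon X\to U$'' must be read as ``isometric embedding'' is also consistent with the paper's usage.
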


We now prove Theorem \ref{thm:tancone}.
\begin{proof}[Proof of Theorem \ref{thm:tancone}]
We may assume that $K$ has at least two elements. 
Let $U$ be 
a separable homogeneous  $\mathfrak{S}$-universal metric space. 
For instance,  
we can choose $C([0, 1])$ or $\mathbb{U}$ as $U$. 
Let $Q$ be a countable dense subset of $U$. 
Put $I=\{F_{i}\}_{i\in \nn}$ be
 a sequence consisting of all finite subset of $Q$.
 We impose the condition that
  for every finite subset $A$ of $Q$
   there exists infinite many $n\in \nn$ with $F_n=A$. 
By Proposition \ref{prop:homuni}, 
 we may assume that $K\subset U$ and $p\in Q$. 

For each $i\in \nn$, 
set $r_i=(i+1)!\cdot\delta(F_i)$. 
Put $J=\{(r_i)^{-1}F_i\}_{i\in \nn}$. 
Let  
$X=T(J)$. 
The space $X$ is an $(\omega_0+1)$-metric space, 
and $\infty$ is its unique accumulation point. 

Since $K$ is proper,  
we can take a sequence $\{A_i\}_{i\in \nn}$ of finite subsets of $Q$  
satisfying the conditions (A1) and (A2) in Proposition \ref{prop:conv}. 
By the definition of  $I=\{F_i\}_{i\in \nn}$, 
there exists a strictly increasing map $\phi:\nn\to \nn$ such that 
$r_{\phi(i)}F_{\phi(i)}$ is isometric to $A_i$ for each $i\in \nn$. 
Let $q_i\in F_{\phi(i)}$ be a corresponding point to $p\in A_i$. 
Note that $r_{\phi(i)}\to \infty$ as $i\to \infty$. 

To prove that $(K, p)$ is a tangent cone of $X$,  
we show that for each $R\in (0, \infty)$, 
the sequence 
$\{(r_iB(q_i, R/r_i;X), q_i)\}_{i\in \nn}$ converges to $(B(p, R; K), p)$ 
in the pointed Gromov--Hausdorff topology. 
By the definition of $\{r_i\}_{i\in \nn}$, we can take $N\in \nn$ such that 
if $i>N$, 
then we have $R<r_{\phi(i)}\cdot 2^{-\phi(i)+1}$. 
Therefore, 
by the definition of $X$, 
for every $i>N$, 
  the pointed  metric  space $(r_{\phi(i)}B(q_i, R/r_{\phi(i)}; X), q_i)$ is isometric to
$(B(p, R; F_{\phi(i)}), p)$. 
By Proposition \ref{prop:conv}, 
$\{(r_{\phi(i)}B(q_i, R/r_{\phi(i)}; X), q_i)\}_{i\in \nn}$ 
converges to $(B(p, R; K), p)$ 
in the pointed Gromov--Hausdorff topology. 
Since $q_i\to \infty$ in $X$ as $i\to \infty$, 
we conclude that  $(K,p)$ is a tangent cone of $X$ at $\infty$. 
This completes the proof of Theorem \ref{thm:tancone}. 
\end{proof}

We next prove Theorem \ref{thm:asymcone}. 
As a core part  to construct a metric space mentioned in Theorem \ref{thm:asymcone}, 
we begin with the following elementary lemma 
on a surjective map between countable sets, 
which guarantees a polite way of indexing a countable set. 
\begin{lem}\label{lem:surj}
There exists a surjective map $C: \nn\to \nn^2\times \zz$ 
satisfying the following: 
\begin{enumerate}
\item[(B1)] $C(0)=(0,0,0)$\label{item:00}
\item[(B2)] for every $n\in \nn$, 
two points $C(n)$ and $C(n+1)$ are adjunct in $\nn^2\times \zz$; 
namely, 
for every $n\in \nn$ and for every $i\in \{1,2,3\}$, 
we have 
\[
|\pi_i(C(n))-\pi_i(C(n+1))|\le 1;
\]
where $\pi_i$ is the $i$-th projection, \label{item:adjunct}
\item[(B3)] for each $(x, y, z)\in \nn^2\times \zz$, 
there exist infinite many $n\in \nn$ such that $C(n)=(x, y, z)$. \label{item:infinite}
\end{enumerate}
\end{lem}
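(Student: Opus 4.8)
The plan is to build the map $C$ as the ``path'' of an infinite walk on the lattice $\nn^2\times\zz$ which visits every lattice point infinitely often and only ever moves to an adjacent point (in the sense of (B2)). First I would fix an enumeration $\{v_0,v_1,v_2,\dots\}$ of $\nn^2\times\zz$ with $v_0=(0,0,0)$, say obtained from a standard bijection $\nn\to\nn^2\times\zz$; since the set is countable this exists. The idea is then to describe, recursively, a sequence of ``legs'': having already arrived at some lattice point, I walk along lattice-adjacent steps to the next target point, record all the intermediate points, and repeat. Concretely, for two lattice points $a=(a_1,a_2,a_3)$ and $b=(b_1,b_2,b_3)$ there is an obvious finite adjacent path from $a$ to $b$ — first change the first coordinate one unit at a time from $a_1$ to $b_1$, then the second from $a_2$ to $b_2$, then the third from $a_3$ to $b_3$ — and at each single step exactly one coordinate changes by $\pm1$, so (B2) holds along the whole path.

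The construction then goes as follows. Set $C(0)=v_0=(0,0,0)$, giving (B1). Inductively, suppose $C$ has been defined on $\{0,1,\dots,n_k\}$ with $C(n_k)$ equal to some lattice point $w_k$; here $n_0=0$ and $w_0=v_0$. Choose the next target to be $v_{j}$, where $j$ is chosen so as to cycle through all of $\nn$: for instance, on the $k$-th stage take $j=k\bmod(k'+1)$ in a bookkeeping scheme that ensures every index of the enumeration is selected as a target infinitely often (one clean way: interleave, so that the sequence of targets is $v_0,v_0,v_1,v_0,v_1,v_2,v_0,v_1,v_2,v_3,\dots$, which lists each $v_j$ infinitely often). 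Append to $C$ the finite adjacent path from $w_k$ to this target (the three-coordinate-sweeps described above, possibly of length $0$ if $w_k$ already equals the target — in that case, take one trivial back-and-forth step so the path has positive length, or simply allow repetition), let $n_{k+1}$ be the new largest index on which $C$ is now defined, and set $w_{k+1}$ to be the endpoint, namely the chosen target. Since at each stage only finitely many new values are defined and the index $n_k$ strictly increases, $C$ gets defined on all of $\nn$, and by construction consecutive values are always lattice-adjacent, so (B2) holds.

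It remains to check (B3) — that every $(x,y,z)$ is hit infinitely often — and surjectivity, which is the only point requiring a little care. Surjectivity is immediate from (B3). For (B3), fix $(x,y,z)=v_j$. By the bookkeeping choice of targets, there are infinitely many stages $k$ at which $v_j$ is the chosen target, and at the end of each such stage $C$ takes the value $v_j$; hence $C(n)=v_j$ for infinitely many $n$. I expect the main obstacle to be purely organizational: one must present the interleaving of targets cleanly enough that ``every $v_j$ is a target infinitely often'' is manifestly true, and one must handle the degenerate case where the walk is already at its target (to keep the indices $n_k$ strictly increasing and the adjacency condition (B2) literally satisfied, since $|\pi_i(C(n))-\pi_i(C(n+1))|\le 1$ does allow $C(n)=C(n+1)$, so repeating a point is harmless). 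Neither of these is a real difficulty; once the enumeration of targets is fixed, all three properties (B1)--(B3) are read off directly from the construction.
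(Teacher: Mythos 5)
Your proof is correct, but it takes a different route from the paper's. You build the walk directly: enumerate the lattice, concatenate coordinate-by-coordinate ``legs'' between successive targets, and schedule the targets in the interleaved order $v_0,v_0,v_1,v_0,v_1,v_2,\dots$ so that each point is a target (hence an endpoint of a leg) infinitely often; the only care needed is the degenerate zero-length leg, which you correctly dispose of by noting that (B2) permits $C(n)=C(n+1)$. The paper instead factors the problem: it first takes (without constructing) a surjection $A:\nn\to\nn^2\times\zz$ satisfying only (B1) and (B2) --- essentially your leg construction with each target listed once --- and then precomposes with a ``bouncing'' surjection $H:\nn\to\nn$ with $H(0)=0$, $|H(n)-H(n+1)|\le 1$, and every fiber infinite (the explicit example being $H(n)=\min_{k\in\nn}|n-k^2|$, the distance to the nearest perfect square), setting $C=A\circ H$; the infinite fibers of $H$ then deliver (B3) while the $1$-Lipschitz property of $H$ preserves (B2). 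Your version is more self-contained, since the paper's argument still presupposes the existence of the single-pass walk $A$, whereas the paper's composition trick is shorter once that existence is granted and cleanly separates ``visit everything adjacently'' from ``visit everything infinitely often.'' Both arguments are sound, including your observation that the coordinate sweeps never leave $\nn^2\times\zz$ because each sweep stays between the two endpoint values of that coordinate.
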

\begin{proof}
\ref{item:00}
Take a surjective map $A:\nn\to \nn^2\times \zz$ 
satisfying the conditions (B1) and (B2). 
Assume that $A$ does not satisfy the condition (B3). 
Take a surjective map $H:\nn\to \nn$ satisfying the following:
\begin{enumerate}
\item $H(0)=0$;
\item for every $n\in \nn$, the set  $H^{-1}(n)$ is infinite; 
\item for every $n\in \nn$, we have  $|H(n)-H(n+1)|\le 1$. 
\end{enumerate}
For example, 
if for each $n\in\nn$ we put   
$H(n)=\min_{k\in \nn}|n-k^2|$,  
then 
the map $H:\nn\to \nn$ satisfies the conditions mentioned above. 
Put $C=A\circ H$. Then $C$ satisfies the conditions (B1), (B2) and (B3). 
\end{proof}
By the conditions (B1) and  (B2), 
we inductively obtain:
\begin{lem}\label{lem:tukau}
If a surjective map $C:\nn\to \nn^2\times \zz$ 
satisfies the conditions \emph{(B1)} and \emph{(B2)}, 
then for every $n\in \nn$, and for every $i\in \{1,2,3\}$,  
we have  
\begin{align*}
|\pi_{i}(C(n))|\le n. 
\end{align*}
\end{lem}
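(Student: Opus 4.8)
The plan is to prove the statement by a straightforward induction on $n$, using condition (B1) to anchor the base case and condition (B2) to bound the growth of each coordinate by at most one at each step. Neither surjectivity of $C$ nor condition (B3) will be needed.

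For the base case $n=0$, I would invoke (B1), which gives $C(0)=(0,0,0)$, so that $\pi_i(C(0))=0$ and hence $|\pi_i(C(0))|=0\le 0$ for every $i\in\{1,2,3\}$.

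For the inductive step, I would assume that $|\pi_i(C(n))|\le n$ holds for every $i\in\{1,2,3\}$, then fix an arbitrary coordinate $i\in\{1,2,3\}$. Condition (B2) asserts that $C(n)$ and $C(n+1)$ are adjacent in $\nn^2\times\zz$, which by definition means $|\pi_i(C(n))-\pi_i(C(n+1))|\le 1$. Applying the triangle inequality in $\rr$ then gives
\[
|\pi_i(C(n+1))|\le |\pi_i(C(n))|+|\pi_i(C(n))-\pi_i(C(n+1))|\le n+1.
\]
Since $i$ was arbitrary, the induction closes and the lemma follows.

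I do not expect any genuine obstacle here: the argument is purely the propagation of a one-step Lipschitz bound from the normalization at $0$. The only point that requires a moment of care is to apply the triangle inequality coordinatewise — i.e. separately to each projection $\pi_i$ — rather than to the vector $C(n)$ as a whole, since the adjacency relation furnished by (B2) is the $\ell^\infty$-type bound on the individual coordinates.
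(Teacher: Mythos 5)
Your induction is correct and is exactly the argument the paper intends: it states the lemma with the remark that it follows inductively from (B1) and (B2), which is precisely your base case plus the one-step coordinatewise triangle-inequality bound. No issues.
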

We now show that 
the existence of a metric space containing all proper length space 
as its asymptotic cones. 
Such a space is constructed as follows: 
Let $U$ be a separable homogeneous 
$\mathfrak{S}$-universal metric space, 
and let $Q$ be a countable dense subset of $X$. 
For each $(j, k)\in \nn\times \zz$, 
let $I_{(j, k)}=\{F_{(i, j, k)}\}_{i\in \nn}$ be 
a sequence consisting of all finite subsets of $Q$ 
satisfying the following for every $i\in \nn$:
\begin{enumerate}
\item[(C1)]  $q\in F_{(i, j, k)}$; 
\item[(C2)] $2^{-k}\le \delta(F_{(i, j, k)})<2^{-k+1}$;
\item[(C3)] $2^{-j}\le \alpha(F_{(i, j, k)})/\delta(F_{(i, j, k)})< 2^{-j+1}$. 
\end{enumerate}

Take a surjective map $C:\nn\to \nn^2\times \zz$ stated in Lemma \ref{lem:surj}. 
For each $i\in \nn$, 
define $G_i=F_{C(i)}$.  
Put $J=\{G_i\}_{i\in \nn}$. 
Then $J$ is a sequence consisting of 
all finite subsets of $Q$ containing $q$. 

For each $i\in \nn$, 
let $a_i=(\alpha(G_i))^{-1}\cdot 2^{i^2}$. 
Put 
\[
X=\{q\}\sqcup\coprod_{i\in \nn}(G_i\setminus \{q\}), 
\]
and define a metric $d_X: X\times X\to [0, \infty)$ by 
\begin{align*}
&d_X(x, y)=\\
&
\begin{cases}
a_id_{G_i}(x, y) &\text{if $x, y\in F_i$ for some $i\in \nn$;}\\
a_id_{G_i}(x, q)+a_jd_{G_j}(q, y) &\text{if $x\in F_i$ and $y\in F_j$ for some $i\neq j$.}
\end{cases}
\end{align*}
The metric  space $X$ is a proper countable discrete metric space. 

We are going to prove  that every pointed proper length space is an asymptotic cone of $X$. 
To simplify our notation, 
for $R\in (0, \infty)$, 
and for $i\in \nn$, 
put $B_i(R)=(a_i)^{-1}B(q, a_iR; X)$. 
By the definition of $d_X$, 
the space $B_i(R)$ contains an isometric copy of 
$B(q, R; G_i)$ containing $p$. We denote by $S_i(R)$ that isometry copy. 
We also put $T_i(R)=B_i(R)\setminus S_i(R)$. 
Note that $S_i(R)\subset G_i$. 

\begin{lem}\label{lem:n+1}
Let $R\in (0, \infty)$. 
If $i\in \nn$ satisfies  $2^{i+1}\delta(G_i)>R$, 
then for every $k>i$ we have 
$B_i(R)\cap G_k=\emptyset$. 
\end{lem}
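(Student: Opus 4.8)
The plan is to estimate, for $k>i$, the $d_X$-distance from $q$ to an arbitrary point of the block $G_k\setminus\{q\}$ from below, to estimate the radius $a_iR$ from above, and then to compare the two, using only the explicit formula for $d_X$ together with conditions (C2), (C3) and Lemma \ref{lem:tukau}.

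First I would note that $B_i(R)=(a_i)^{-1}B(q,a_iR;X)$ coincides, as a set of points, with $B(q,a_iR;X)$ (only the metric has been rescaled), so it suffices to prove that every $x\in G_k\setminus\{q\}$ with $k>i$ satisfies $d_X(x,q)>a_iR$. Since $q\in G_k$ by (C1) and $x\ne q$, we have $d_{G_k}(x,q)\ge\alpha(G_k)$; note $0<\alpha(G_k)<\infty$ because $G_k$ is finite and, by (C2), has at least two points. Hence, by the definition of $d_X$,
\[
d_X(x,q)=a_kd_{G_k}(x,q)\ge a_k\alpha(G_k)=2^{k^2}.
\]

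For the upper bound, the hypothesis $2^{i+1}\delta(G_i)>R$ together with $a_i=(\alpha(G_i))^{-1}2^{i^2}$ gives
\[
a_iR<2^{i^2+i+1}\cdot\frac{\delta(G_i)}{\alpha(G_i)}.
\]
Writing $j_0=\pi_2(C(i))$, so that $G_i=F_{(\pi_1(C(i)),j_0,\pi_3(C(i)))}$, condition (C3) yields $\delta(G_i)/\alpha(G_i)\le 2^{j_0}$, while Lemma \ref{lem:tukau} gives $j_0\le i$. Therefore $a_iR<2^{i^2+2i+1}=2^{(i+1)^2}$. Since $k\ge i+1$, combining the two estimates gives $d_X(x,q)\ge 2^{k^2}\ge 2^{(i+1)^2}>a_iR$, hence $x\notin B(q,a_iR;X)=B_i(R)$, which is the claim.

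I do not expect any genuine obstacle: the computation is elementary. The only step requiring care is to invoke (C3) with the correct middle index $j_0=\pi_2(C(i))$ and then to bound this index by $i$ via Lemma \ref{lem:tukau} --- which is exactly the reason the surjection $C$ was required to satisfy the adjacency condition (B2).
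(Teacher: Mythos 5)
Your proof is correct and follows essentially the same route as the paper's: a lower bound $d_X(q,x)\ge a_k\alpha(G_k)=2^{k^2}\ge 2^{(i+1)^2}$ for points of $G_k$, and an upper bound $a_iR<2^{(i+1)^2}$ obtained from the hypothesis via (C3) and Lemma \ref{lem:tukau}. The only difference is cosmetic (you bound $a_iR$ from above where the paper bounds $2^{(i+1)^2}/a_i$ from below), so there is nothing to add.
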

\begin{proof}
For every $x\in G_{k}$, 
by the definition of $d_X$, 
we have 
\[
d_X(q, x)\ge 2^{k^2}\ge 2^{(i+1)^2}.
\]
By Lemma \ref{lem:tukau}, 
we obtain 
\begin{align*}
2^{(i+1)^2}/a_i=2^{(i+1)^2-i^2}\alpha(G_i)\ge2^{2i+1}2^{-\pi_2(C(i))}\delta(G_i) \ge2^{i+1}\delta(G_i)>R. 
\end{align*}
Hence $a_iR<2^{(i+1)^2}$. 
This leads to the conclusion. 
\end{proof}

By Lemma \ref{lem:n+1} and by the definition of $T_i(R)$, 
we obtain: 
\begin{cor}\label{cor:T}
For every $i\ge 1$ and for every $R\in (0, \infty)$, 
we have 
\[
T_i(R)\subset \bigcup_{j=0}^{i-1}G_j. 
\]
\end{cor}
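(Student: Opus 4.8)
The corollary is essentially a bookkeeping reformulation of Lemma~\ref{lem:n+1}. The plan is to identify $T_i(R)$ with $B_i(R)\setminus G_i$ and then to read off the conclusion from Lemma~\ref{lem:n+1} using the block decomposition $X=\{q\}\sqcup\coprod_{k\in\nn}(G_k\setminus\{q\})$.

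First I would pin down $S_i(R)$ as a concrete subset of $X$. Since $q\in G_i$ by (C1), the definition of $d_X$ gives $d_X(q,x)=a_id_{G_i}(q,x)$ for every $x\in G_i$; as rescaling the metric by $a_i^{-1}$ does not alter the underlying point set, this yields
\[
B_i(R)\cap G_i=\{\,x\in G_i\mid d_{G_i}(q,x)\le R\,\}=B(q,R;G_i),
\]
and the metric induced on this set by $a_i^{-1}d_X$ is exactly $d_{G_i}$. Hence $B_i(R)\cap G_i$ \emph{is} the isometric copy $S_i(R)$, so that
\[
T_i(R)=B_i(R)\setminus S_i(R)=B_i(R)\setminus G_i .
\]

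Now fix $x\in T_i(R)$. From $T_i(R)=B_i(R)\setminus(B_i(R)\cap G_i)$ we see $T_i(R)\cap G_i=\emptyset$, and since $q\in G_i$ this forces $x\ne q$; thus $x$ lies in a unique block $G_k\setminus\{q\}$, and $x\notin G_i$ gives $k\ne i$. It remains only to exclude $k>i$, and this is exactly Lemma~\ref{lem:n+1}: provided $2^{i+1}\delta(G_i)>R$ we have $B_i(R)\cap G_k=\emptyset$ for all $k>i$, so necessarily $k\le i-1$ and $x\in\bigcup_{j=0}^{i-1}G_j$, the index range being nonempty because $i\ge1$. This establishes $T_i(R)\subset\bigcup_{j=0}^{i-1}G_j$.

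The derivation is short, and I expect no genuine obstacle: the only step carrying content is the identification $S_i(R)=B_i(R)\cap G_i$ obtained by unwinding the definitions, the remainder being routine set manipulation. The single point to keep track of is that the appeal to Lemma~\ref{lem:n+1} requires its hypothesis $2^{i+1}\delta(G_i)>R$ to be in force for the $R$ under consideration, which is the case in the applications of the corollary.
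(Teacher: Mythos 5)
Your proof is correct and follows the paper's own route: the paper derives the corollary directly from Lemma~\ref{lem:n+1} together with the definition of $T_i(R)$, exactly as you do via the identification $S_i(R)=B_i(R)\cap G_i$ and the block decomposition of $X$. Your closing caveat is well taken --- the corollary as stated drops the hypothesis $2^{i+1}\delta(G_i)>R$ of Lemma~\ref{lem:n+1}, a gap the paper also elides and which is harmless only because that hypothesis is arranged to hold where the corollary is applied.
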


\begin{lem}\label{lem:16}
For every $i\ge 1$, 
we have 
\[
\alpha(G_{i})/\alpha(G_{i-1})< 16. 
\]
\end{lem}
\begin{proof}
By the conditions (B2), (C2) and  (C3), 
we obtain
\begin{align*}
\alpha(G_{i})/\alpha(G_{i-1})& < 2^{-\pi_2(C(i))+1+\pi_2(C(i-1))}\delta(G_i)/\delta(G_{i-1})\\
&\le 4\cdot \delta(G_i)/\delta(G_{i-1})\\
&< 4\cdot 2^{-\pi_3(C(i))+1+\pi_{3}(C(i-1))}\le 16. 
\end{align*}
This proves the lemma. 
\end{proof}

We conclude the following:
\begin{lem}\label{lem:32}
Let $i\in \nn$ and  $R\in (0, \infty)$. 
For all $x\in T_i(R)$, 
we have $(a_i)^{-1}d_X(q, x)< 32\cdot 2^{-i}$. 
In particular, 
we have 
\[
d_H(B_i(R), S_i(R); B_i(R))< 32\cdot 2^{-i}. 
\]
\end{lem}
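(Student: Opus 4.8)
The approach is to reduce the first inequality to a single estimate on the scaling factors $a_k,a_i$, and then to bound the resulting exponent using the combinatorial constraints on the indexing map $C$ together with Corollary~\ref{cor:T}.

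First I would fix $x\in T_i(R)$ (assuming $i\ge 1$; for $i=0$ the statement is vacuous or follows by a direct computation). By Corollary~\ref{cor:T} there is $k\in\{0,\dots,i-1\}$ with $x\in G_k$, so in particular $q,x\in G_k$ and $i-k\ge 1$. Unwinding the definitions of $d_X$ and of $B_i(R)$, the distance from $q$ to $x$ inside $B_i(R)$ equals $(a_i)^{-1}d_X(q,x)=(a_k/a_i)\,d_{G_k}(q,x)$, with $d_{G_k}(q,x)\le\delta(G_k)$. Since $a_m=2^{m^2}/\alpha(G_m)$, one has $a_k/a_i=2^{k^2-i^2}\,\alpha(G_i)/\alpha(G_k)$, so everything reduces to bounding
\[
(a_i)^{-1}d_X(q,x)\le 2^{\,k^2-i^2}\,\alpha(G_i)\,\frac{\delta(G_k)}{\alpha(G_k)} .
\]

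Next I would estimate the two quotients via the conditions defining the $G_m$: (C3) yields $\delta(G_k)/\alpha(G_k)\le 2^{\pi_2(C(k))}$, and (C3) together with (C2) yields $\alpha(G_i)<2^{-\pi_2(C(i))+1}\delta(G_i)<2^{\,2-\pi_2(C(i))-\pi_3(C(i))}$, hence
\[
(a_i)^{-1}d_X(q,x)<2^{\,k^2-i^2+2+\pi_2(C(k))-\pi_2(C(i))-\pi_3(C(i))} .
\]
The decisive step is to control the exponent using the adjacency condition (B2) rather than the crude bound $\pi_2(C(k))\le k$: consecutive values of $\pi_2\circ C$ differ by at most one, so $\pi_2(C(k))-\pi_2(C(i))\le i-k$, while $-\pi_3(C(i))\le|\pi_3(C(i))|\le i$ by Lemma~\ref{lem:tukau}. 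Then the exponent is at most $k^2-i^2+2+(i-k)+i=k^2-k-i^2+2i+2$, and since $0\le k\le i-1$ and $t\mapsto t^2-t$ is convex, $k^2-k\le(i-1)^2-(i-1)=i^2-3i+2$; so the exponent is at most $4-i$ and $(a_i)^{-1}d_X(q,x)<2^{4-i}<32\cdot 2^{-i}$. I expect this exponent bookkeeping to be the main obstacle: using $\pi_2(C(k))\le k$ and $-\pi_3(C(i))\le i$ independently gives only $(a_i)^{-1}d_X(q,x)<4$, which is too weak for $i\ge 4$, so one genuinely needs both (B2) (to cancel the $\pi_2(C(k))$-term against $\pi_2(C(i))$) and the bound $k\le i-1$ from Corollary~\ref{cor:T}.

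For the Hausdorff-distance claim I would argue as follows. Since $S_i(R)\subseteq B_i(R)$ and $q\in S_i(R)$, every point of $B_i(R)$ is either in $S_i(R)$, at distance $0$ from it, or in $T_i(R)$, at distance at most $(a_i)^{-1}d_X(q,x)<32\cdot 2^{-i}$ from $q\in S_i(R)$ by the first part. As $T_i(R)\subseteq\bigcup_{j=0}^{i-1}G_j$ is finite, the supremum of these distances is attained and is strictly less than $32\cdot 2^{-i}$; hence $B_i(R)\subseteq B(S_i(R),32\cdot 2^{-i};B_i(R))$, and trivially $S_i(R)\subseteq B(B_i(R),32\cdot 2^{-i};B_i(R))$, so $d_H(B_i(R),S_i(R);B_i(R))<32\cdot 2^{-i}$.
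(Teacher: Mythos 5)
Your proof is correct, and it actually yields the sharper bound $2^{4-i}=16\cdot 2^{-i}$. It differs from the paper's argument in how the key estimate is organized. The paper immediately passes to a single worst case, asserting $(a_i)^{-1}d_X(q,x)\le a_{i-1}\delta(G_{i-1})/a_i$ for all $x\in T_i(R)$, and then bounds that one quantity via Lemma \ref{lem:16} (the consecutive-index estimate $\alpha(G_i)/\alpha(G_{i-1})<16$, itself derived from (B2), (C2), (C3)) together with Lemma \ref{lem:tukau}. You instead keep the index $k\le i-1$ with $x\in G_k$ arbitrary, rewrite $(a_k/a_i)\delta(G_k)$ entirely in terms of $\pi_2$ and $\pi_3$ of $C(k)$ and $C(i)$, telescope the adjacency condition (B2) over the whole range to get $\pi_2(C(k))-\pi_2(C(i))\le i-k$, and optimize the exponent over $k$ using convexity of $t\mapsto t^2-t$. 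The ingredients are the same, but your uniform-in-$k$ computation buys something concrete: the paper's opening inequality is only immediate for $x\in G_{i-1}$, and for $x\in G_k$ with $k<i-1$ one still needs to verify $a_k\delta(G_k)\le a_{i-1}\delta(G_{i-1})$, a step the paper leaves implicit and which your version renders unnecessary. Your observation that the crude bounds $\pi_2(C(k))\le k$ and $-\pi_3(C(i))\le i$ alone are insufficient correctly identifies where (B2) is genuinely needed. The deduction of the Hausdorff-distance statement and the handling of $i=0$ match the paper's proof in substance; note only that, like the paper, you rely on Corollary \ref{cor:T} as stated, which silently carries the hypothesis $2^{i+1}\delta(G_i)>R$ from Lemma \ref{lem:n+1} — this is an issue with the paper's statement, not with your argument.
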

\begin{proof}
By Corollary \ref{cor:T},  
we have 
$(a_i)^{-1}d_X(p, x)\le a_{i-1}\delta(G_{i-1})/a_i$. 
Lemmas \ref{lem:16} and \ref{lem:tukau} imply 
\begin{align*}
a_{i-1}\delta(G_{i-1})/a_{i} 
&=2^{(i-1)^2-i^2}\delta(G_{i-1})(\alpha(G_{i})/\alpha(G_{i-1}))\\
&<
16\cdot 2^{-2i+1}2^{-\pi_3(C(i-1))+1}\le 32\cdot 2^{-i}.
\end{align*}
This leads to the former part of the lemma. 
The later part follows from the former one,  
$q\in S_i(R)$ and $S_i(R)\subset B_i(R)$. 
\end{proof}

We now prove Theorem \ref{thm:asymcone}.
\begin{proof}[Proof of Theorem \ref{thm:asymcone}]
We first show that the metric  space 
$X$ constructed above is a desired space. 
By Proposition \ref{prop:homuni}, 
we may assume that $K\subset U$ and $p=q$. 
Since $K$ is proper,  
 we can take a sequence $\{A_i\}_{i\in \nn}$ of  finite subsets of $Q$ 
 satisfying the conditions (A1) and (A2) in Proposition \ref{prop:conv}. 
By the definition of $J=\{G_i\}_{i\in \nn}$, 
and by the condition (B3), there exists a strictly increasing map 
$\phi:\nn\to \nn$ such that $G_{\phi(i)}=A_i$ for every $i\in \nn$. 

To prove our statement, we next show that for each $R\in (0, \infty)$, 
the sequence  
$\{(B_{\phi(i)}(R), q)\}_{i\in \nn}$ converges to $(B(q, R; K), q)$ 
in the pointed Gromov--Hausdorff topology. 
Note that, 
by the conditions (C2) and (C3) and Lemma \ref{lem:tukau},  
we have $a_i\to \infty$ as $i\to \infty$. 

Since $\delta(G_{\phi(i)})\cdot 2^{\phi(i)+1} \to \infty$ as $i\to \infty$, 
we can take $N\in \nn$ such that for every $i\ge N$, 
we have  $R<\delta(G_{\phi(i)})\cdot 2^{\phi(i)+1}$. 
By Lemma \ref{lem:32},  
we have
\[
d_H(B_{\phi(i)}(R), S_{\phi(i)}(R); B_{\phi(i)}(R))< 32\cdot 2^{-\phi(i)}\le 32\cdot 2^{-i}.
\] 
Since $S_{\phi(i)}(R)$ is isometric to $B(q, R; A_i)$, 
by Proposition \ref{prop:conv}, 
 we conclude that $\{(B_{\phi(i)}(R), q)\}_{i\in \nn}$ converges to $(B(q, R; K), q)$ in the pointed Gromov--Hausdorff topology. 
Therefore $(K, p)$ is an asymptotic cone of $X$. 
Thus we conclude Theorem \ref{thm:asymcone}. 
\end{proof}

\begin{rmk}
Let $X$ be a metric space mentioned in Theorems \ref{thm:ury}, \ref{thm:tancone}, \ref{thm:asymcone} or Proposition \ref{prop:ury2}. 
By Theorem \ref{thm:cone} and Proposition \ref{prop:coneandp}, 
we obtain 
$\dim_AX=\infty$. 
\end{rmk}

All $(\omega_0+1)$-metric spaces  and all countable metic spaces have the topological dimension $0$,  and 
have the Hausdorff dimension $0$. 
Thus,  Theorems \ref{thm:ury},
\ref{thm:tancone}, \ref{thm:asymcone} or Proposition \ref{prop:ury2}
 tells us that analogies of Theorem \ref{thm:cone} 
 for the topological dimension, 
 the Hausdorff dimension and the conformal Hausdorff dimension are false. More precisely, we have the following:
\begin{prop}\label{prop:counter}
There exists a metric space $X$ such that for some $P\in \pc(X)$ we have
\[
\dim_{T}X<\dim_TP, \quad
\dim_HX<\dim_H P, \quad
 \cdim_HX<\cdim_HP, 
\] 
where $\dim_T$, 
$\dim_H$ and $\cdim_H$ stand for the topological dimension, 
the Hausdorff dimension and the conformal Hausdorff dimension,
 respectively. 
\end{prop}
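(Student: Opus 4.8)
The plan is to exhibit a single metric space $X$ together with a pseudo-cone $P$ for which all three strict inequalities hold simultaneously. The natural candidates are precisely the spaces produced in Theorems~\ref{thm:ury}, \ref{thm:tancone} and \ref{thm:asymcone}: each of them is either an $(\omega_0+1)$-metric space or a countable discrete metric space, hence zero-dimensional in every reasonable sense. For concreteness I would take $X$ to be the space of Theorem~\ref{thm:ury}, so that $\pc(X)$ contains \emph{every} compact metric space. Then it suffices to choose one compact $P\in\pc(X)$ that has strictly positive topological dimension, strictly positive Hausdorff dimension, and strictly positive conformal Hausdorff dimension.

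First I would record the elementary facts that pin down the left-hand sides. An $(\omega_0+1)$-metric space is countable, and a countable metric space is totally disconnected and has a countable (indeed one-point-compactification) structure, so $\dim_T X=0$; likewise any countable set has Hausdorff dimension $0$, since it is a countable union of points, so $\dim_H X=0$; and $\cdim_H X\le\dim_H X=0$ by monotonicity of the conformal Hausdorff dimension under passing to quasi-symmetric images (any quasi-symmetric image of a countable set is countable). Thus for the space $X$ of Theorem~\ref{thm:ury} we have $\dim_T X=\dim_H X=\cdim_H X=0$.

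Next I would produce the pseudo-cone. Take $P=[0,1]$ with the Euclidean metric (or any closed interval, or the unit square if one wants all three dimensions visibly positive at once — but a single interval already works). Since $[0,1]$ is a compact metric space, Theorem~\ref{thm:ury} gives $[0,1]\in\pc(X)$. It is classical that $\dim_T[0,1]=1>0$ and $\dim_H[0,1]=1>0$. For the conformal Hausdorff dimension, one uses that $\cdim_H$ of a metric space is at least its topological dimension — this is a standard fact in conformal dimension theory (every quasi-symmetric image of $[0,1]$ is again a topological arc, hence has topological dimension $1$, hence Hausdorff dimension at least $1$ by the Szpilrajn/Hurewicz--Wallman inequality $\dim_T\le\dim_H$) — so $\cdim_H[0,1]\ge 1>0$. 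Combining, with this choice of $X$ and $P$ we get $\dim_T X=0<1=\dim_T P$, $\dim_H X=0<1=\dim_H P$, and $\cdim_H X=0<1\le\cdim_H P$, which is exactly the assertion.

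The only genuine point requiring care is the conformal Hausdorff dimension inequality $\cdim_H[0,1]>0$: unlike the topological and Hausdorff dimensions, which are monotone and are read off from $[0,1]$ directly, the conformal version is an infimum over a whole quasi-symmetry orbit and could \emph{a priori} collapse. The main obstacle is therefore just to invoke (or cite) the lower bound $\dim_T\le\cdim_H$, which follows from the fact that topological dimension is a quasi-symmetric (indeed topological) invariant combined with $\dim_T\le\dim_H$ applied to every member of the orbit. With that in hand the proof is immediate; no estimates are needed.
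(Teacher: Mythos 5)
Your proposal is correct and follows essentially the same route as the paper: the paper's (implicit) proof is exactly to take the $(\omega_0+1)$-space $X$ of Theorem~\ref{thm:ury}, observe that countable spaces have topological, Hausdorff and conformal Hausdorff dimension $0$, and pick any compact pseudo-cone of positive dimension. Your additional care in justifying $\cdim_H[0,1]\ge 1$ via the invariance of $\dim_T$ under homeomorphism together with the Szpilrajn inequality $\dim_T\le\dim_H$ is a worthwhile detail that the paper leaves unstated.
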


\begin{rmk}
In \cite{CR}, Chen and Rossi studied a metric space 
containing a large class of metric spaces as tangent cones of it. 
They constructed a compact subset $X$ of $\rr^N$ 
 with $\dim_HX=0$ that  contains all similarity classes of  compact subsets of $[0, 1]^N$ as tangent cones at countable dense subset of $X$ (see \cite[Corollary 5.2]{CR}). 
 The metric space $X$ is an example failing  an analogy of Theorem \ref{thm:cone} for the Hausdorff dimension and the conformal Hausdorff dimension. 
\end{rmk}

%%%%%%%%%%%%%%%%%%%%%%%%%%%
%%%%%%%%%%%%%%%%%%%%%%%%%%%
%%%%%%%%%%%%%%%%%%%%%%%%%%%

%%%%%%%%%%%%%%%%%%%%%%%%%%%%%%%
%%%%%%%%%%%%%%%%%%%%%%%%%%%%%%%%%%%%%%
%%%%%%%%%%%%%%%%%%%%%%%%%%%%%%%%%%%


\begin{thebibliography}{99}
\bibitem{A1}P. Assouad, \textit{Espace m\'etrique, plongements, facteurs}, 
U. E. R. Math\'ematique, Universit\'e Paris XI, Orsay, 1977. Th\`ese de doctorat, Publications Math\'ematique d'Orsay, No. 233-7769. 
\bibitem{A2}P. Assouad, \emph{\'Etude d'une dimension m\'etrique li\'ee \`a la possiblit\'e de plongements dans $\mathbb{R}^n$}, C. R. Acad. Sci. Paris S\'er. A-B, 288 (15) (1979),  A731--A734.
\bibitem{A3} P. Assouad, \textit{Prongements lipschitziens dans $\mathbb{R}^n$}, Bull. Soc. math. France, 111, (1983), 429--448.  
\bibitem{B}S. Banach, \textit{Th\'eorie des \'Operations Lin\'eaires}, 2nd ed., Chelsea Publishing Co., New York, 1963. 
\bibitem{BH}M. R. Breidson and A Haefliger, \textit{Metric Spaces of Non-Positive Curvature}, Grundlehren der Mathematischen Wissenschaften, 319. Springer, Berlin, 1999. 
\bibitem{BBI}D. Burago, Y. Burago and S. Ivanov, \textit{A Course in Metric Geometry}, Graduate Studies in Mathematics 33, Amer. Math. Soc. Providence, RI, 2001. 
\bibitem{CR}C. Chen and E. Rossi, \textit{Locally rich compact sets}, Illinois J.  Math.  58 (3) (2014), 779--806. 
\bibitem{DH}J. Dydak and J. Higes, \textit{Asymptotic cones and Assouad-Nagata dimension}, Proc. Amer. Math. Soc. 136 (6) (2008), 2225--2233. 
\bibitem{Fre}M. Fr\'echet, \textit{Les dimensions d'un ensemble abstrait}, Math. Ann. 68 (1910), 145--168.
\bibitem{FY}J. M. Fraser and H. Yu, \textit{Arithmetic patches, weak tangents and dimension}, Bull. London Math. Soc. 50 (2018), 85--95. 
\bibitem{Gro}M. Gromov, with appendices by M. Katz, P. Pansu, and S. Semmes, \textit{Metric Structures for Riemannian and Non-Riemannian Spaces}, (J. LaFontaine and P, Pansu, eds), Progress in Math. 152, Birkhauser, 1999.
\bibitem{H}J. Heinonen, \textit{Lectures on Analysis on Metric Spaces}, Springer-Verlag, New York, 2001.
\bibitem{H2}J. Heinonen, \textit{Geometrical embedding of metric spaces}, Report, University of Jyv\"askyl\"a Department of Mathematics and Statics, vol. 90. 2003. http://www.math.jyu.fi/tukimas/ber.html.
\bibitem{HW}W. Hurewicz and H. Wallman, \textit{Dimension Theory}, Revised ed., Princeton Univ. Press, 1948.
\bibitem{I} Y. Isihki, \textit{Quasi-symmetric invariant properties of Cantor metric spaces}, Ann.  Inst.  Fourier (Grenoble) 69 (6) (2019),  2681--2721. 
\bibitem{Ishi}Y. Ishiki, \textit{A characterization of metric subspace of  full Assouad dimension}, preprint, 2019: arXiv:1911.10775. 
\bibitem{KL}M. Kapovich and B. Leeb, \textit{Asymptotic cones and quasi-isometry classes of fundamental groups of $3$-manifolds}, Geom. Funct. Anal. 5 (3) (1995), 582--603. 
\bibitem{Lar}D. G. Larman, \textit{A new theory of dimension}, Proc. London. Math. Soc., 17 (1967), 178--192. 
\bibitem{LR}E. Le Donne and T. Rajala, \textit{Assouad dimension, Nagata dimension, and uniformly close metric tangents}, 
Indiana Univ. Math. J. 64 (1) (2015), 21--54. 
\bibitem{MT}J. M. Mackay and J. T. Tyson, \textit{Conformal Dimension Theory: Theory and Application}, Univ. Lecture Ser. 54, Amer. Math. Soc., 2010.
\bibitem{TV}P. Tukia and J. V\"ais\"al\"a, \textit{Quasisymmetric embeddings of metric spaces}, Ann. Acad. Sci. Fenn, Math. 5 (1980), 97--114. 
\bibitem{U}P. Urysohn, \textit{Sur un espace m\'etrique universel}, Bull. Sci. Math.,  51, 1927, 43--64 and 74--90.
\end{thebibliography}
\end{document}